\documentclass[sn-mathphys-num]{sn-jnl}

\usepackage{graphicx}%
\usepackage{multirow}%
\usepackage{amsmath,amssymb,amsfonts}%
\usepackage{amsthm}%
\usepackage{mathrsfs}%
\usepackage[title]{appendix}%
\usepackage{xcolor}%
\usepackage{textcomp}%
\usepackage{manyfoot}%
\usepackage{booktabs}%
\usepackage{algorithm}%
\usepackage{algorithmicx}%
\usepackage{algpseudocode}%
\usepackage{listings}%
\usepackage{arydshln}

\theoremstyle{thmstyleone}%
\newtheorem{theorem}{Theorem}[section]
\newtheorem{proposition}[theorem]{Proposition}%

\newtheorem{lemma}{Lemma}[section]

\theoremstyle{thmstyletwo}%

\theoremstyle{thmstylethree}%
\newtheorem{definition}{Definition}%

\raggedbottom

\begin{document}

\title[Article Title ]{Proximal Iterative Hard Thresholding Algorithm for Sparse Group $\ell_0$-Regularized Optimization with Box Constraints}

\author[1]{\fnm{} \sur{Yuge Ye}}\email{fujianyyg@163.com}

\author*[1]{\fnm{} \sur{Qingna Li}}\email{qnl@bit.edu.cn}

\affil[1]{\orgdiv{Department of Mathematics and Statistics}, \orgname{Beijing Institute of Technology}, \orgaddress{\street{No.5 Yard, Zhong Guan Cun South Street}, \city{Beijing}, \postcode{100081}, \state{Beijing}, \country{China}}}

\abstract{
This paper investigates a general class of problems in which a lower bounded smooth convex function incorporating $\ell_{0}$ and  $\ell_{2,0}$ regularization terms is minimized over box {constraints}.  Although such problems arise frequently in practical applications, their non-convexity introduced by $\ell_{0}$ and  $\ell_{2,0}$ regularization terms poses significant challenges for solution methods. In particular, we focus on the proximal operator associated with these regularization terms, which incorporates both group-sparsity and element-wise sparsity terms. Besides, we introduce the concepts of $\tau$-stationary point and support optimal (SO) point {and} analyze their relationship with the minimizer of the considered problem. Based on the proximal operator, we propose a novel proximal iterative hard thresholding algorithm to solve the problem. Furthermore, we establish the global convergence of our proposed method. Finally, extensive {numerical results} demonstrate the efficiency of our method.
}

\keywords{{group sparsity, box constraint, proximal point mapping, hard thresholding algorithm}
}

\maketitle

\section{Introduction}\label{sec1}
\numberwithin{equation}{section}
Over the last few years, sparse optimization has {gained} considerable attention and has
been rigorously investigated by mathematicians {in the world}, primarily due to the emergence
of compressed sensing \cite{1542412}. Subsequently, an increasing number of applications of sparse optimization have been discovered {including} wireless communication \cite{10636212}, signal and image processing \cite{5447114}, machine learning \cite{pmlr-v32-qin14}, and artificial intelligence \cite{Deng2013}. Consequently, sparse optimization has evolved into an increasingly compelling and relevant field of inquiry.

In this paper, we consider the sparse solutions of a non-overlapping sparse group $\ell_0$-regularized optimization problem with box constraints. Suppose vector $x\in \mathbb{R}^n$ has a predefined non-overlapping group division. That is, $x = \left( x^{\top}_{G_1}, \cdots, x^{\top}_{G_q} \right)^{\top}$, where $x_{G_i}$ is the sub-vector of {$x$} restricted to $G_i$, $\cup^{q}_{i=1} G_i = \{1,\cdots,n\}$ and $G_i \cap G_j = \emptyset$, $\forall i\neq j$ and $q\in \{1,\cdots,n\}$. The optimization problem {we are interested in} is as follows:
\begin{align}\label{GBL0}
\min \ &  f(x) + \lambda \|x\|_0 + \mu \|x\|_{2,0},\
\mbox{s.t.} \  x\in \Omega,
\end{align}
where
\begin{align}\label{eq-arange}
\Omega := \{ x\in \mathbb{R}^n\, \mid \, -l\le x\le u,\ \ l,\ u \in \overline{\mathbb{R}}_{+}^{n}\},\ \overline{\mathbb{R}}_{+}^{n}:= \{ x\in \mathbb{R}^n \, \mid \, x\ge 0\}.
\end{align}
$f: \mathbb{R}^n \to \mathbb{R}$ is {a} smooth function and bounded from below on $\Omega$. $\lambda$, $\mu  \in \mathbb{R}_{+}$ {are} penalty parameters. $\|x\|_0$ is the $\ell_0$ norm of $x$, counting the number of non-zero elements of $x$. {$\|x\|_{2,0}:= \left\|(\|x_{G_1}\|_2,\cdots, \|x_{G_q}\|_2)\right\|_{0}$} is the number of nonzero groups in terms of $\ell_2$ norm. Group-sparsity is an important class of structured sparsity and is referred to as block-sparsity in compressed sensing \cite{5954192}.

\subsection{Sparse Group Lasso}
One relaxation of problem \eqref{GBL0} with $\Omega= \mathbb{R}^n$ is the following sparse group Lasso (SGLasso):
\begin{align}\label{GL1}
\min\limits_{x\in \mathbb{R}^n} & f(x) + \lambda \|x\|_1 + \mu \|x\|_{2,1}.
\end{align}
If $\mu = 0$, then \eqref{GL1} {becomes} the classical Lasso that was first proposed by Tibshirani \cite{10.1111/j.2517-6161.1996.tb02080.x}. Assuming prior knowledge of the group structure in the data, Yuan and Lin \cite{10.1111/j.1467-9868.2005.00532.x} proposed the group Lasso, that is, $\lambda=0$ in \eqref{GL1}. The group Lasso can guarantee the sparsity at the group level. However, it does not ensure the sparsity within each group. In practical problems, data often exhibit both overall element-wise sparsity and group sparsity. That is the reason why the SGLasso has been widely applied to different fields. For example, Simon et al. \cite{Simon01012013} validated the effectiveness of the mixed sparse LASSO  by comparing the predictive accuracy of LASSO, group LASSO, and SGLasso in gene expression studies. Vincent and Hansen \cite{VINCENT2014771} applied the SGLasso to classification problems based on multivariate regression, finding that the SGLasso classifier provides more sparse
solutions than the group LASSO. Furthermore, \eqref{GL1} has also been applied in various fields, including climate change prediction \cite{6137353}, feature selection with uncertain data \cite{xie2014sparse}, and target tracking \cite{7890458}. Currently, the algorithms for solving  \eqref{GL1} include coordinate descent algorithms \cite{friedman2010notegrouplassosparse, Laria03072019}, separable sparse reconstruction methods \cite{5464845}, semi-smooth Newton augmented Lagrange methods \cite{zhang2020efficient}, linearized alternating direction method of multipliers \cite{LI2014203} and smoothing composite proximal gradient algorithm for nonsmooth loss functions \cite{shen2024smoothing}.

\subsection{Sparse Group $\ell_0$-Regularized Optimization}
Although the $\ell_1$ norm is the best convex approximation of the $\ell_0$ norm, the $\ell_1$ norm often leads to excessive relaxation and biased estimation. Therefore, it is natural to consider directly solving the following sparse group $\ell_0$-regularized optimization
\begin{align}\label{GL0}
\min\limits_{x\in \mathbb{R}^n} & f(x) + \lambda \|x\|_0 + \mu \|x\|_{2,0}.
\end{align}
In the case where $\mu = 0$ in  \eqref{GL0}, it {reduces} to a common $\ell_0$-regularized optimization problem. For this case, numerous efficient algorithms have been proposed
\cite{zhou2021newton,cheng2020active,blumensath2008iterative}.
{For $\mu \neq 0$ and $\lambda \neq 0$, \eqref{GL0} also arises from various applications. One typical application comes from the multi-tissue decomposition for diffusion magnetic resonance imaging (MRI) signals \cite{7505644, 10.1007/978-3-319-24574-416}, where Yap et al. studied the $\ell_0$ sparse group estimation model in the form of \eqref{GL0} with a constraint ($x\ge 0$). Numerical results in \cite{7505644} verified that jointly using $\ell_{2,0}$ and $\ell_0$ regularization terms leads to better result than using only $\ell_0$ regularization (See \cite[Part B, page 4349]{7505644}.). Another application is the differential optical absorption spectroscopy (DOAS) analysis, which is formulated as \eqref{GL0} by Hu et al. \cite{hu2025iterative}, who proposed iterative mixed thresholding algorithm with a continuation technique (IMTC) to solve \eqref{GL0}. Numerical results demonstrated the superior performance of \eqref{GL0}, compared with only considering $\ell_0$ or $\ell_{2,0}$ regularization \cite[page 530 and Figure 2]{hu2025iterative}. Other applications include climate prediction \cite{doi:10.1137/1.9781611972825.5}, genetic association detecting \cite{10.1111/biom.12292}, neural network compression \cite{s22114081} and so on.} Liao et al. \cite{liao2024subspace} proposed a (SNSG) Newton's method for \eqref{GL0} and proved its global converge as well as local quadratic convergence rate.

The main difference between \eqref{GBL0} and \eqref{GL0} lies in the fact that \eqref{GBL0} includes extra box constraints $\Omega \subseteq \mathbb{R}^n$. {As far as we know, the box constraints} have been considered in \cite{doi:10.1137/18M1186009, 6307860} and have been shown to be beneficial for the recovery of images compared to approaches without such constraints \cite{6307860}. For \eqref{GBL0}, Li et al. \cite{doi:10.1137/21M1443455} developed a DCGL (Difference of-Convex algorithm for Sparse Group $\ell_0$ problem) based on the capped-$\ell_1$ function for solving it. For $\lambda = 0$ in \eqref{GBL0} with a convex constraints, Zhang and Peng \cite{zhang2022solving} proposed a (GSPG) group
smoothing proximal gradient algorithm for solving it. For $\mu = 0$ in \eqref{GBL0}, Lu \cite{lu2014iterative} provided a closed-form for the $\ell_0$-regularized problem and proposed a (PIHT) proximal iterative hard thresholding method. Based on PIHT, Wu and Bian \cite{wu2020accelerated} proposed an accelerated method for PIHT. Inspired by ITMC and PHIT, a natural question arises: Can we develop a direct method to solve \eqref{GBL0}? This {motivates} the work in this paper. We develop a Proximal Iterative Hard Thresholding method for the Sparse Group $\ell_0$-regularized problem with Box constraints (PIHT-SGB).

The main contributions of this paper are summarized as {follows}. Firstly, we provide a closed-form of the proximal operator for \eqref{GBL0}. Secondly, we introduce the concept of $\tau$-stationary point and support optimal (SO) point for \eqref{GBL0}. We explore a relationship among SO point, $\tau$-stationary point and  the minimizer of \eqref{GBL0}. This relationship plays an important role in establishing convergence analysis. Thirdly, we proposed the so-called PIHT-SGB for \eqref{GBL0}. Fourthly, we show that the sequence generated by our
method converges to a local minimizer of \eqref{GBL0}. Moreover, we establish the iteration complexity
of the PIHT-SGB method for finding a local-optimal solution. Finally, we demonstrate the efficiency of our proposed method through extensive numerical {results}.

The organization of the paper is as follows. In {Section} 2, we introduce the proximal operator for \eqref{GBL0} and propose the so-called PIHT-SGB for \eqref{GBL0}. In {Section} 3, we introduce the concept of $\tau$-stationary point and support optimal (SO) point of \eqref{GBL0}, then we do convergence and complexity analysis of our method. We conduct various numerical experiments in
{Section} 4 to verify the efficiency of the proposed method. Final conclusions are given in {Section} 5.

Notations. Let $\| \cdot \|$ represents the $\ell_2$ norm. For $x\in \mathbb{R}^n$, $|x| := ( |x_1|, |x_2|, \cdots, |x_n|)^{\top}$ denotes the absolute value of each component of $x $. Denote {$\mathcal{S}(x) = {\rm{supp}} (x)$} to be its support set consisting of the indices of the non-zero elements. For any $q\in \mathbb{Z}_{+}$, denote $[q] := [1,\cdots, q]$. Given a set $\mathcal{I} \subseteq [n]$, we denote $|\mathcal{I}|$ as its cardinality set and $\overline{\mathcal{I}}$ as its complementary set. Denote $\Pi_{\Omega}(x):= \arg \min\limits_{y\in \Omega}\|x-y\|^2 $ as a project mapping on {set} $\Omega$.

\section{Proximal Operator and Solution Method}
In this section, we provide some basic concepts and related properties for \eqref{GBL0}, and propose the so-called PIHT-SGB method.

\subsection{Proximal operator}
First, we transform \eqref{GBL0} into the following equivalent form
\begin{align}\label{PBL0}
\min\limits_{x\in \mathbb{R}^{n}} \phi(x) := f(x) + \lambda \|x\|_0 + \mu \|x\|_{2,0} + \delta_{\Omega}(x),
\end{align}
where $\Omega$ is defined by \eqref{eq-arange} {and} $\delta_{\Omega}(\cdot)$ denotes the indicator function of $\Omega$, {defined by} \begin{align*}
\delta_{\Omega}(x) =
\begin{cases}
0, \        &\mbox{if}\  x\in \Omega , \\
+\infty, \  &\mbox{if}\  x\notin \Omega .\\
\end{cases}
\end{align*}
Throughout this paper, we define {$(\tau >0)$}
\begin{align*}
p(\cdot) &:= \lambda \|\cdot \|_0 + \mu \|\cdot\|_{2,0} + \delta_{\Omega}(\cdot),\\
s_{\tau}(x) &:= x - \tau \nabla f(x),\\
d_{\tau}(x) &:= \Pi_{\Omega}(s_{\tau}(x)) - s_{\tau}(x) .
\end{align*}
We now review PIHT {(proximal iterative hard thresholding method) proposed in} \cite{lu2014iterative}, which is designed for the following problem
{\begin{align}\label{eq-BL0}
\min\limits_{x\in \mathbb{R}^n} \ &  f(x) + \lambda \|x\|_0,\
\mbox{s.t.} \  x\in \Omega.
\end{align}}
The framework of PIHT is as follows.
\begin{algorithm}[H]
\caption{PIHT for \eqref{eq-BL0}.}\label{alg-PIHT}
{Choose an arbitrary $x_0 \in \Omega$. Set $k=0$}.

(1) Solve the subproblem
\begin{align}\label{eq-subproblem-l0}
x^{k+1} \in \arg \min\limits_{{y\in \Omega}}\ \left\{f(x^k) + \langle \nabla f(x^k),y-x^k\rangle + \frac{1}{2\tau}\|y-x^k\|^2 +  \lambda\|y\|_0 \right\}.
\end{align}

(2) Set $k \gets k + 1$ and go to step (1).
\end{algorithm}
The subproblem \eqref{eq-subproblem-l0} has a closed form solution {as shown below}.
\begin{lemma}{\rm{\cite[Lemma 3.2]{lu2014iterative}}}\label{lem-proximall0}
For $i=1,\cdots, n$, denote $s_{\tau}(x^k) = x^k - \tau \nabla f(x^k)$. The solution $x^{k+1}$ of \eqref{eq-subproblem-l0} is given as follows:
\begin{align}\label{eq-proximall0-1}
x^{k+1}_i =
\begin{cases}
\left( \Pi_{\Omega}(s_{\tau}(x^k)) \right)_i,\ &\mbox{if}\ \left(s_{\tau}(x^k)\right)_i^2 - \left( d_{\tau}(x^k)\right)_i^2 > 2\lambda \tau, \\
0,\ &\mbox{if}\ \left(s_{\tau}(x^k)\right)_i^2 - \left( d_{\tau}(x^k)\right)_i^2 < 2\lambda \tau, \\
\left( \Pi_{\Omega}(s_{\tau}(x^k)) \right)_i\ \mbox{or}\ 0,\ &\mbox{if}\ \left(s_{\tau}(x^k)\right)_i^2 - \left( d_{\tau}(x^k)\right)_i^2 = 2\lambda \tau.
\end{cases}
\end{align}
\end{lemma}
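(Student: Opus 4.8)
The plan is to exploit the fully separable structure of the subproblem and reduce it to $n$ independent scalar problems, each of which is settled by a two-way comparison. First I would observe that the objective in \eqref{eq-subproblem-l0} — together with the (implicit) box constraint $y\in\Omega$ encoded by $\delta_{\Omega}$ — decomposes coordinate-wise: the linear term $\langle \nabla f(x^k), y-x^k\rangle$, the proximal term $\frac{1}{2\tau}\|y-x^k\|^2$, the penalty $\lambda\|y\|_0 = \lambda\sum_{i=1}^n \mathbf{1}[y_i\neq 0]$, and the box indicator $\delta_{\Omega}(y) = \sum_{i=1}^n \delta_{[-l_i,u_i]}(y_i)$ are all sums over the $n$ coordinates. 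Hence, discarding the constant $f(x^k)$, the problem splits into the scalar problems
\[
\min_{y_i\in[-l_i,u_i]} \ \tfrac{1}{2\tau}(y_i-x^k_i)^2 + [\nabla f(x^k)]_i(y_i-x^k_i) + \lambda\,\mathbf{1}[y_i\neq 0], \qquad i=1,\dots,n.
\]

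Second, I would complete the square on the smooth part of each scalar problem. Using $[s_{\tau}(x^k)]_i = x^k_i - \tau[\nabla f(x^k)]_i$, a direct computation gives
\[
\tfrac{1}{2\tau}(y_i-x^k_i)^2 + [\nabla f(x^k)]_i(y_i-x^k_i) = \tfrac{1}{2\tau}\bigl(y_i-[s_{\tau}(x^k)]_i\bigr)^2 - \tfrac{\tau}{2}[\nabla f(x^k)]_i^2,
\]
and the last term is independent of $y_i$. Thus each scalar problem reduces to minimizing $\frac{1}{2\tau}(y_i-[s_{\tau}(x^k)]_i)^2 + \lambda\,\mathbf{1}[y_i\neq 0]$ over $y_i\in[-l_i,u_i]$.

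Third, I would solve each scalar problem by comparing the only two competing regimes. If $y_i=0$ — always feasible, since $l_i,u_i\ge 0$ forces $0\in[-l_i,u_i]$ — the objective equals $\frac{1}{2\tau}[s_{\tau}(x^k)]_i^2$. If $y_i\neq 0$, the penalty contributes the fixed amount $\lambda$ and the remaining quadratic is minimized over the box by the projection $\pi_i := [\Pi_{\Omega}(s_{\tau}(x^k))]_i$, giving value $\frac{1}{2\tau}(\pi_i-[s_{\tau}(x^k)]_i)^2 + \lambda = \frac{1}{2\tau}[d_{\tau}(x^k)]_i^2 + \lambda$, where I invoke the definition $d_{\tau}(x^k)=\Pi_{\Omega}(s_{\tau}(x^k))-s_{\tau}(x^k)$. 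Comparing the two values, the nonzero choice is strictly preferable exactly when $[s_{\tau}(x^k)]_i^2 - [d_{\tau}(x^k)]_i^2 > 2\lambda\tau$, the zero choice when the reverse strict inequality holds, and both are optimal at equality, which is precisely \eqref{eq-proximall0-1}.

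The one point requiring care — and the main (minor) obstacle — is the degenerate case in the nonzero regime where the projection $\pi_i$ itself equals $0$, so that the infimum over $\{y_i\neq 0\}$ is not attained. There, however, $[d_{\tau}(x^k)]_i = -[s_{\tau}(x^k)]_i$, so the two candidate values coincide up to the extra $+\lambda$, and the comparison still selects $y_i=0$ (for $\lambda>0$), leaving the stated rule intact. I expect the remainder to be routine once the coordinate-wise reduction and the completion of the square are in place.
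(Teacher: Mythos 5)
Your proof is correct; note that the paper does not prove this lemma at all but cites it from Lu's work, so the only internal point of comparison is the paper's own proof of Theorem \ref{the-proximalgroup}, which uses exactly your technique at the group level (comparing $\Psi_i(z_{G_i};s_{G_i})$ against $\Psi_i(0;s_{G_i})$). Your argument — coordinate-wise separability including the implicit box indicator, completing the square to reduce to $\frac{1}{2\tau}\bigl(y_i-[s_{\tau}(x^k)]_i\bigr)^2+\lambda\,\mathbf{1}[y_i\neq 0]$ over $[-l_i,u_i]$, and the two-candidate comparison yielding the threshold $[s_{\tau}(x^k)]_i^2-[d_{\tau}(x^k)]_i^2 \gtrless 2\lambda\tau$ — is essentially the standard proof of this closed form, and your treatment of the degenerate case $[\Pi_{\Omega}(s_{\tau}(x^k))]_i=0$ (where the nonzero infimum is not attained but zero still wins for $\lambda>0$) is sound.
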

As we know, for a proper lower semicontinuous function $h: \mathbb{R}^n \to (-\infty,\,  +\infty]$, its proximal operator ${\rm{Prox}}_{h(\cdot)}: \mathbb{R}^n \to \mathbb{R}^n$ is a set-valued mapping defined by
\begin{align}\label{eq-Def-proximal}
{\rm{Prox}}_{h(\cdot)}(s) :=\arg \min\limits_{y\in \mathbb{R}^n} \frac{1}{2}\|y - s\|^2 + h(y).
\end{align}
From Lemma \ref{lem-proximall0}, we can easily obtain the following {result}.
\begin{proposition}\label{pro-proximall0}
The proximal operator ${\rm{Prox}}_{\tau \lambda {\|\cdot\|_0}}(\cdot)$ admits a closed form as
\begin{align}\label{eq-proximall0-2}
\left({\rm{Prox}}_{\tau \lambda\|\cdot\|_0}(s) \right)_{i} =
\begin{cases}
\left( \Pi_{\Omega}(s) \right)_i,\ &\mbox{if}\ \left(s\right)_i^2 - \left( \Pi_{\Omega}(s) - s\right)_i^2 > 2\lambda \tau, \\
0,\ &\mbox{if}\ \left(s\right)_i^2 - \left( \Pi_{\Omega}(s) - s\right)_i^2 < 2\lambda \tau, \\
\left( \Pi_{\Omega}(s) \right)_i\ \mbox{or}\ 0,\ &\mbox{if}\ \left(s\right)_i^2 - \left( \Pi_{\Omega}(s) - s\right)_i^2 = 2\lambda \tau.
\end{cases}
\end{align}
\end{proposition}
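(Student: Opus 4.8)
The plan is to obtain Proposition \ref{pro-proximall0} as an immediate consequence of Lemma \ref{lem-proximall0}, by recognizing that the proximal subproblem and the iterative hard thresholding subproblem \eqref{eq-subproblem-l0} are the same optimization problem after completing the square. First I would rewrite the objective in \eqref{eq-subproblem-l0}: expanding $\frac{1}{2\tau}\|y-x^k\|^2$ and collecting the terms that depend on $y$ shows that, up to an additive constant independent of $y$, the objective equals $\frac{1}{2\tau}\|y - s_\tau(x^k)\|^2 + \lambda\|y\|_0$, where $s_\tau(x^k) = x^k - \tau\nabla f(x^k)$. Comparing with the characterization of the proximal operator in \eqref{eq-Def-proximal}, this identifies the minimizer $x^{k+1}$ of \eqref{eq-subproblem-l0} (with the feasibility term $\delta_\Omega$ that produces the projections $\Pi_\Omega$ in the closed form) as ${\rm{Prox}}_{\tau \lambda\|\cdot\|}$ evaluated at $s_\tau(x^k)$. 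The substitution then becomes explicit: writing $s := s_\tau(x^k)$, the quantity $[d_\tau(x^k)]_i$ is exactly $[\Pi_\Omega(s) - s]_i$ by the definition of $d_\tau$, so the branch conditions $[s_\tau(x^k)]_i^2 - [d_\tau(x^k)]_i^2 \gtrless 2\lambda\tau$ of the lemma turn into $[s]_i^2 - [\Pi_\Omega(s)-s]_i^2 \gtrless 2\lambda\tau$, and the branch values into $[\Pi_\Omega(s)]_i$ and $0$, which is precisely \eqref{eq-proximall0-2}.

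To guarantee the formula for an arbitrary argument $s\in\mathbb{R}^n$, and not only for points of the special form $x-\tau\nabla f(x)$, I would avoid any appeal to surjectivity of $x\mapsto x-\tau\nabla f(x)$ and instead argue directly. The subproblem $\arg\min_y \frac{1}{2\tau}\|y-s\|^2 + \lambda\|y\|_0 + \delta_\Omega(y)$ is fully separable across coordinates, since $\|y\|_0 = \sum_i \mathbf{1}[y_i\neq 0]$, $\delta_\Omega(y) = \sum_i \delta_{[-l_i,u_i]}(y_i)$, and the quadratic term splits as well. For each $i$ the minimizer is one of two candidates: $y_i=0$ with cost $\tfrac{1}{2\tau} s_i^2$, or the best nonzero feasible value $y_i = [\Pi_\Omega(s)]_i$ with cost $\tfrac{1}{2\tau}[\Pi_\Omega(s)-s]_i^2 + \lambda$. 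Comparing the two costs yields exactly the threshold $2\lambda\tau$ and the three cases. This coordinate-wise computation is essentially the proof of Lemma \ref{lem-proximall0} restated for a generic $s$, and it is the cleanest route to the stated closed form.

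The point requiring care, rather than a genuine obstacle, is the boundary bookkeeping: verifying that $0\in[-l_i,u_i]$ so that $y_i=0$ is always feasible (which holds because $l,u\in\overline{\mathbb{R}}_{+}^{n}$), handling the degenerate case $[\Pi_\Omega(s)]_i = 0$ in which the two candidates coincide, and ensuring that the tie at $[s]_i^2 - [\Pi_\Omega(s)-s]_i^2 = 2\lambda\tau$ is reflected by the set-valued ``or'' branch. None of these alters the structure of the argument, so the proposition follows directly from Lemma \ref{lem-proximall0}.
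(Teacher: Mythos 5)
Your proposal is correct, and its first paragraph is precisely the paper's own route: the paper offers no written proof beyond the remark that the proposition follows ``easily'' from Lemma~\ref{lem-proximall0} via the identification \eqref{eq-Def-proximal}, which is exactly the completing-the-square translation you spell out (with $s_\tau(x^k)\mapsto s$ and $d_\tau(x^k)\mapsto \Pi_\Omega(s)-s$). Where you go beyond the paper is the second paragraph: the direct coordinate-wise argument for a generic $s\in\mathbb{R}^n$, comparing the cost $\tfrac{1}{2\tau}s_i^2$ of $y_i=0$ against $\tfrac{1}{2\tau}[\Pi_\Omega(s)-s]_i^2+\lambda$ for the best nonzero feasible value. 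This closes a gap the paper glosses over --- Lemma~\ref{lem-proximall0} is stated only for arguments of the gradient-step form $x^k-\tau\nabla f(x^k)$, while the proposition is asserted for arbitrary $s$ --- and your handling of the edge cases (feasibility of $0$ from $l,u\ge 0$, the degenerate case $[\Pi_\Omega(s)]_i=0$ where the tie branch is vacuous for $\lambda\tau>0$, and the set-valued ``or'' at equality) is sound. So the proposal both reproduces the paper's argument and supplies a self-contained, more rigorous alternative; either suffices to establish \eqref{eq-proximall0-2}.
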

To give the closed form for ${\rm{Prox}}_{\tau p(\cdot)}(\cdot)$, we need the following {result}.
\begin{proposition} \rm{\cite[Lemma 2.1]{calamai1987projected}} \label{pro-projection}
{Let} $\Omega \subset \mathbb{R}^n$ {be} a nonempty closed convex set. Let  $\Pi_{\Omega}(\cdot)$ be the projection into $\Omega$.
\begin{description}
\item{(i)} If $z \in \Omega$, then
$\left\langle \Pi_{\Omega}(x) - x,\,  z - \Pi_{\Omega}(x) \right\rangle \ge 0,\ \forall x \in \mathbb{R}^n .$

\item{(ii)} $\Pi_{\Omega}(\cdot)$ is a monotone operator, that is, $\langle \Pi_{\Omega}(y) - \Pi_{\Omega}(x),\, y - x\rangle \ge 0$ for any $x$,  $y\in \mathbb{R}^n$. If $\Pi_{\Omega}(y) \neq \Pi_{\Omega}(x)$, then {the} strict inequality holds.

\item{(iii)} $\Pi_{\Omega}(\cdot)$ is a nonexpansive operator, that is, $\|\Pi_{\Omega}(y) - \Pi_{\Omega}(x)\| \le \|y-x\|$ for any $x$, $y\in \mathbb{R}^n$.
\end{description}
\end{proposition}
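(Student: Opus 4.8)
The plan is to derive all three statements from the variational characterization of the projection onto a convex set, proving them in the order (i) $\Rightarrow$ (ii) $\Rightarrow$ (iii), so that the later parts reuse the earlier ones. The whole argument rests only on convexity of $\Omega$ and the first-order optimality condition for the defining minimization; no property of $f$ is needed.

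First I would establish (i), which is exactly the optimality condition for $\Pi_{\Omega}(x) = \arg\min_{y\in\Omega}\|y-x\|^2$. Fix $x\in\mathbb{R}^n$, write $p := \Pi_{\Omega}(x)$, and take any $z\in\Omega$. Because $\Omega$ is convex, the segment $p + t(z-p)$ stays in $\Omega$ for every $t\in[0,1]$, so the scalar function $g(t) := \|p + t(z-p) - x\|^2 = \|p-x\|^2 + 2t\langle p-x,\, z-p\rangle + t^2\|z-p\|^2$ attains its minimum over $[0,1]$ at $t=0$. Hence $g'(0) = 2\langle p-x,\, z-p\rangle \ge 0$, which is precisely (i). Existence and uniqueness of $p$ is guaranteed here because $\Omega$ is nonempty, closed, and convex, so the strictly convex, coercive objective has a unique minimizer.

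Next I would obtain (ii) by applying (i) twice. Writing $a := \Pi(x)$ and $b := \Pi(y)$, the inequality of (i) applied at $x$ with $z = b \in \Omega$ gives $\langle a-x,\, b-a\rangle \ge 0$, and applied at $y$ with $z = a \in \Omega$ gives $\langle b-y,\, a-b\rangle \ge 0$. Adding the two and regrouping the inner products yields the stronger \emph{firm nonexpansiveness} estimate $\langle \Pi(y)-\Pi(x),\, y-x\rangle \ge \|\Pi(y)-\Pi(x)\|^2$. Since the right-hand side is nonnegative, monotonicity is immediate, and it is strictly positive whenever $\Pi(y)\neq\Pi(x)$, which gives the strict-inequality claim. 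Part (iii) is then a one-line consequence: by Cauchy--Schwarz, $\|\Pi(y)-\Pi(x)\|^2 \le \langle \Pi(y)-\Pi(x),\, y-x\rangle \le \|\Pi(y)-\Pi(x)\|\,\|y-x\|$, and dividing through by $\|\Pi(y)-\Pi(x)\|$ (the case $\Pi(y)=\Pi(x)$ being trivial) gives $\|\Pi(y)-\Pi(x)\| \le \|y-x\|$.

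The argument is largely routine, so there is no serious obstacle; the one genuinely substantive step is (i), and the single point worth getting right in step two is recognizing that adding the two instances of (i) produces the firm-nonexpansiveness inequality rather than merely monotonicity. It is precisely this sharper inequality that makes both the strict monotonicity in (ii) and the nonexpansiveness in (iii) fall out immediately, the latter via a bare application of Cauchy--Schwarz.
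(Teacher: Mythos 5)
Your proof is correct, but note that the paper itself offers no proof of this proposition at all: it is imported verbatim as Lemma~2.1 of Calamai and Mor\'e's 1987 paper on projected gradient methods, so there is nothing internal to compare against. What you have written is the standard self-contained convex-analysis argument, and it is essentially the proof one finds in that reference and in textbooks: part (i) is the first-order optimality condition for the projection problem (your quadratic-in-$t$ argument along the segment $p+t(z-p)$ is sound, and you correctly flag that closedness, nonemptiness, and convexity give existence and uniqueness of the minimizer); parts (ii) and (iii) then follow from the firm-nonexpansiveness inequality $\langle \Pi(y)-\Pi(x),\,y-x\rangle \ge \|\Pi(y)-\Pi(x)\|^2$, obtained by adding the two instances of (i). Your observation that this sharper inequality, rather than bare monotonicity, is the right intermediate target is exactly the point: it delivers the strict inequality in (ii) and, via Cauchy--Schwarz, the nonexpansiveness in (iii) in one stroke. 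In short, your proposal supplies a complete proof where the paper relies on a citation, and it does so by the same route the cited source takes.
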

The following theorem provides a closed form for ${\rm{Prox}}_{\tau p(\cdot)}(\cdot)$.
\begin{theorem}\label{the-proximalgroup}
The operator ${\rm{Prox}}_{\tau p(\cdot)}(\cdot)$ takes a closed form as {follows}
\begin{align}\label{eq-the-groupproximal}
\left({\rm{Prox}}_{\tau p(\cdot)}(s) \right)_{G_i} =
\begin{cases}
z_{G_i},\ &\mbox{if}\ \|z_{G_i}\| > \sqrt{2\tau (\lambda \|z_{G_i}\|_0 + \mu )}, \\
z_{G_i}\, \mbox{or}\  0,\ &\mbox{if}\ \|z_{G_i}\| = \sqrt{2\tau (\lambda \|z_{G_i}\|_0 + \mu )}, \\
0,\ &\mbox{otherwise},
\end{cases}
\end{align}
where $z \in {\rm{Prox}}_{\tau \lambda \|\cdot \|_0}(s)$.
\end{theorem}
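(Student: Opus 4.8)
The plan is to exploit the separability of $p$ over the non-overlapping blocks $G_1,\dots,G_q$. Since $\cup_i G_i=[n]$ with the $G_i$ pairwise disjoint, we have $\|y\|_0=\sum_i\|y_{G_i}\|_0$, $\|y\|_{2,0}=\sum_i\mathbf{1}(y_{G_i}\neq 0)$, and, because the box $\Omega$ is separable, $\delta_{\Omega}(y)=\sum_i\delta_{\Omega_i}(y_{G_i})$ with $\Omega_i$ the corresponding box for block $G_i$. Consequently the objective defining $\mathrm{Prox}_{\tau p(\cdot)}(s)=\arg\min_y \tfrac12\|y-s\|^2+\tau p(y)$ decouples into $q$ independent block problems, so it suffices to minimize $\tfrac12\|y_{G_i}-s_{G_i}\|^2+\tau\lambda\|y_{G_i}\|_0+\delta_{\Omega_i}(y_{G_i})$ plus the term $\tau\mu$ that is present exactly when $y_{G_i}\neq 0$.

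For a fixed block I would split into the two exhaustive alternatives $y_{G_i}=0$ and $y_{G_i}\neq 0$. When $y_{G_i}\neq 0$ the group penalty contributes the constant $\tau\mu$, so the residual minimization is precisely the box-constrained $\ell_0$ problem on $G_i$; by Proposition \ref{pro-proximall0} its minimizer is $z_{G_i}$ with $z\in\mathrm{Prox}_{\tau\lambda\|\cdot\|_0}(s)$, giving the ``group-on'' value $\tfrac12\|z_{G_i}-s_{G_i}\|^2+\tau\lambda\|z_{G_i}\|_0+\tau\mu$. The ``group-off'' value is simply $\tfrac12\|s_{G_i}\|^2$, where we use that $0\in\Omega$ since $l,u\ge 0$. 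The block problem thus reduces to comparing these two scalars, and identifying which is smaller yields the three branches of \eqref{eq-the-groupproximal}.

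Next I would form the difference $\Delta:=\tfrac12\|s_{G_i}\|^2-\big(\tfrac12\|z_{G_i}-s_{G_i}\|^2+\tau\lambda\|z_{G_i}\|_0+\tau\mu\big)$, so that $z_{G_i}$ is preferred iff $\Delta>0$. Expanding the squares gives $\Delta=\langle z_{G_i},s_{G_i}\rangle-\tfrac12\|z_{G_i}\|^2-\tau\lambda\|z_{G_i}\|_0-\tau\mu$. The crux is to convert the cross term $\langle z_{G_i},s_{G_i}\rangle$ into $\|z_{G_i}\|^2$: on the support of $z$ the block entries equal the coordinatewise box projection $\Pi_{\Omega}(s)$, and applying Proposition \ref{pro-projection}(i) at the feasible point $0\in\Omega$ gives $\langle \Pi_{\Omega}(s),\,\Pi_{\Omega}(s)-s\rangle\le 0$ coordinatewise, i.e. $\langle z_{G_i},s_{G_i}\rangle\ge\|z_{G_i}\|^2$. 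Substituting this, $\Delta\ge \tfrac12\|z_{G_i}\|^2-\tau\lambda\|z_{G_i}\|_0-\tau\mu$, which is strictly positive exactly when $\|z_{G_i}\|^2>2\tau(\lambda\|z_{G_i}\|_0+\mu)$, i.e. $\|z_{G_i}\|>\sqrt{2\tau(\lambda\|z_{G_i}\|_0+\mu)}$; this is the first branch. The reverse strict inequality forces $\Delta<0$ and hence the zero branch, while the borderline case $\|z_{G_i}\|=\sqrt{2\tau(\lambda\|z_{G_i}\|_0+\mu)}$ gives $\Delta=0$, so both $z_{G_i}$ and $0$ are minimizers, matching the tie case.

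I expect the main obstacle to be precisely the treatment of the cross term when the box constraint is active: the clean reduction to the stated norm threshold rests on the identity $\langle z_{G_i},s_{G_i}\rangle=\|z_{G_i}\|^2$, which holds whenever the box is inactive on the support of $z$ but only as the inequality $\langle z_{G_i},s_{G_i}\rangle\ge\|z_{G_i}\|^2$ in general. I would therefore isolate and verify, via Proposition \ref{pro-projection}, whether the support coordinates of $s_{G_i}$ lie inside the box, and establish this identity as the key step before assembling the three cases of \eqref{eq-the-groupproximal}.
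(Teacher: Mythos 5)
Your proposal follows exactly the paper's route: group-wise separability, reduction of each block to the two-point comparison between $z_{G_i}$ and $0$, and Proposition \ref{pro-projection} applied at the feasible point $0$ to control the cross term. Your first branch is sound: from $\langle z_{G_i},s_{G_i}\rangle \ge \|z_{G_i}\|^2$ you get $\Delta \ge \tfrac12\|z_{G_i}\|^2-\tau\lambda\|z_{G_i}\|_0-\tau\mu$, which is positive when $\|z_{G_i}\|>\sqrt{2\tau(\lambda\|z_{G_i}\|_0+\mu)}$. But the next step, ``the reverse strict inequality forces $\Delta<0$,'' is a non sequitur: you only have a \emph{lower} bound on $\Delta$, and a negative lower bound forces nothing. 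The zero branch and the tie branch require the opposite inequality, i.e.\ the identity $\langle z_{G_i},s_{G_i}\rangle=\|z_{G_i}\|^2$, and --- exactly as you suspect in your closing paragraph --- this identity fails whenever the box clips a support coordinate: if $s_j>u_j>0$ then $z_js_j=u_js_j>u_j^2=z_j^2$.

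This obstacle is not a verifiable technicality but a genuine failure of the stated closed form. Take $n=1$, a single group, $l_1=0$, $u_1=1$, $\tau=1$, $\lambda=0$, $\mu=1$, $s=10$. Then $z=\Pi_{\Omega}(s)=1$ by \eqref{eq-proximall0-2}, and $\|z_{G_1}\|=1<\sqrt{2}$, so \eqref{eq-the-groupproximal} outputs $0$; yet $\tfrac12(1-10)^2+\tau\mu=41.5<50=\tfrac12 s^2$, so the true proximal point is $1$. The correct threshold must be expressed through the cross term, namely $\sum_{j\in {\rm{supp}}(z_{G_i})}\bigl([s]_j^2-[\Pi_{\Omega}(s)-s]_j^2\bigr)>2\tau(\lambda\|z_{G_i}\|_0+\mu)$, which mirrors the element-wise form in Proposition \ref{pro-proximall0} and collapses to the stated $\|z_{G_i}\|^2$-threshold only when the box is inactive on ${\rm{supp}}(z_{G_i})$. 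For what it is worth, the paper's own proof commits precisely the error you stopped short of: it derives $\Psi_i(z_{G_i};s_{G_i})\le \tfrac12\|s_{G_i}\|^2-\tfrac12\|z_{G_i}\|^2+\tau\lambda\|z_{G_i}\|_0+\tau\mu$ and then silently asserts the difference $\Psi_i(z_{G_i};s_{G_i})-\Psi_i(0;s_{G_i})$ \emph{equals} $-\tfrac12\|z_{G_i}\|^2+\tau\lambda\|z_{G_i}\|_0+\tau\mu$, so its Cases 2 and 3 carry the identical gap. Your diagnosis in the last paragraph is the right one: under the additional assumption that $s_{G_i}$ lies in the box on the support of $z$ (e.g.\ when the constraint is inactive, or $\Omega=\mathbb{R}^n$), your argument and the paper's both close; in general, neither does, because the theorem as stated is false.
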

\begin{proof}
We need to consider
\begin{align}\label{eq-pro-1}
\min\limits_{x \in \mathbb{R}^n} {\Psi}(x;\, s) := \frac{1}{2}\|x - s\|^2 + \tau \lambda \|x\|_{0} + \tau \mu \|x\|_{2,0} + \delta_{\Omega}(x) .
\end{align}
Note that \eqref{eq-pro-1} is of a group separable structure. The solution of \eqref{eq-pro-1} can be achieved parallelly at each group. Therefore, we only need to consider the following subproblems
\begin{align}\label{eq-pro-group}
\min\limits_{x\in \mathbb{R}^{n_i}} \Psi_i(x;\, s_{G_i}) := \frac{1}{2}\|x - s_{G_i}\|^2 + \tau \lambda \|x\|_{0} + \tau \mu \|x\|_{2,0} + \delta_{D_i}(x),
\end{align}
where $i\in [p]$, $D_i := \left\{ x\in \mathbb{R}^{|G_i|}\, \big| \, -l_{G_i} \le x \le u_{G_i} \right\}$.
For each $x\in D_i \setminus \{0\}$, it holds that $\|x\|_{2,0} = 1$. Thus, we obtain that
\begin{align}\label{eq-pro-l0}
\Psi_i(x;\, s_{G_i}) = \frac{1}{2}\|x - s_{G_i}\|^2 + \tau \lambda \|x\|_{0} + \tau \mu + \delta_{D_i}(x),\ {x\in D_i \setminus \{0\}}.
\end{align}
By Proposition \ref{pro-proximall0}, a minimum of $\Psi_i(\cdot; s_{G_i})$ over $D_i \setminus \{0\}$ is $z_{G_i} \in \left( {\rm{Prox}}_{\tau \lambda \|\cdot\|_0}(s)\right)_{G_i}$. By \eqref{eq-proximall0-2}, the non-zero component of $z_{G_i}$ is $\left(\Pi_{\Omega}(s)\right)_{G_i}$. For simplicity, let's assume {$z_{G_i}= \left(\Pi_{\Omega}(s)\right)_{G_i}$}, which does not affect the overall analysis. By Proposition \ref{pro-projection}, it holds that
$\left\langle \left(\Pi_{\Omega}(s)\right)_{G_i} - s_{G_i},\, 0 - \left(\Pi_{\Omega}(s)\right)_{G_i} \right\rangle \ge 0,$ hence $\left\langle  s_{G_i},\, z_{G_i} \right\rangle \ge \left\| \left(\Pi_{\Omega}(s)\right)_{G_i} \right\|^2
$. Together with \eqref{eq-pro-l0}, it holds that
\begin{align*}
\Psi_i(z_{G_i};\, s_{G_i}) &= \frac{1}{2}\|z_{G_i} - s_{G_i}\|^2 + \tau \lambda \|z_{G_i}\|_{0} + \tau \mu \\
&= \frac{1}{2}\|z_{G_i}\|^2 + \frac{1}{2}\|s_{G_i}\|^2 - \left\langle z_{G_i}, s_{G_i}\right\rangle + \tau \lambda \|z_{G_i}\|_{0} + \tau \mu \\
&\le  \frac{1}{2}\|z_{G_i}\|^2 +\frac{1}{2}\|s_{G_i}\|^2 - \|z_{G_i}\|^2 + \tau \lambda \|z_{G_i}\|_{0} + \tau \mu \\
&= \frac{1}{2}\|s_{G_i}\|^2 - \frac{1}{2} \|z_{G_i}\|^2 + \tau \lambda \|z_{G_i}\|_{0} + \tau \mu .
\end{align*}
For $x = 0$, it holds that $\Psi_i(0;\,  s_{G_i}) = \frac{1}{2} \|s_{G_i}\|^2$. It is obvious that
$$\Psi_i(z_{G_i};\,  s_{G_i}) - \Psi_i(0;\, s_{G_i}) = - \frac{1}{2} \|z_{G_i}\|^2 + \tau \lambda \|z_{G_i}\|_{0} + \tau \mu  . $$
Now, we consider three cases from \eqref{eq-the-groupproximal}.

\textbf{Case 1}. If $\|z_{G_i}\| > \sqrt{2 \tau (\lambda \|z_{G_i}\|_{0} + \mu) }$, then $ \Psi_i(z_{G_i};\, s_{G_i}) <  \Psi_i(0;\, s_{G_i})$. The minimizer of \eqref{eq-pro-group} is $z_{G_i}$.

\textbf{Case 2}. If $\|z_{G_i}\| < \sqrt{2 \tau (\lambda \|z_{G_i}\|_{0} + \mu) }$, then $ \Psi_i(z_{G_i}; s_{G_i}) >  \Psi_i(0; s_{G_i})$. The minimizer of \eqref{eq-pro-group} is 0.

\textbf{Case 3}. If $\|z_{G_i}\| = \sqrt{2 \tau (\lambda \|z_{G_i}\|_{0} + \mu) }$, then $ \Psi_i(z_{G_i}; s_{G_i}) =  \Psi_i(0; s_{G_i})$. Both $z_{G_i}$ and 0 are the minimizer of \eqref{eq-pro-group}. The proof is complete.
\end{proof}

\subsection{Solution method: PIHT-SGB}
Our objective is to design an algorithm that can converge to some stationary point of \eqref{PBL0}. {To that end}, similar to \cite{liao2024subspace}, we introduce a novel $\tau$-stationary point for \eqref{PBL0}.
\begin{definition}
{\rm{($\tau$-stationary point)}}. Let $\tau > 0$, a vector $x^* \in \Omega$ is called {a} $\tau$-stationary point of \eqref{PBL0} if the following holds
\begin{align}\label{eq-stationary}
x^* \in {\rm{Prox}}_{\tau p(\cdot)}(x^* - \tau \nabla f(x^*)).
\end{align}
\end{definition}
The main idea of our method is to find a point in each iteration which satisfies $$x^{k+1}\in {\rm{Prox}}_{\tau p(\cdot)}(s_{\tau}(x^k)).$$
To achieve this, we need to define
\begin{align}
\textbf{H} (y; \gamma) :=
\left\{ x\in \mathbb{R}^{n}\ \big| \, x_i = \left(\Pi_{\Omega}(y)\right)_i ,\ \text{if}\ |y_i|^2 - \left( \Pi_{\Omega}(y) - y \right)^2_i > \gamma ; \nonumber \right.\\
\left. x_i = 0,\ \text{if}\ |y_i|^2 - \left( \Pi_{\Omega}(y) - y \right)^2_i \le \gamma \right\}, \label{L0-hard}\\
\textbf{H}_{G_i}(z; \gamma) := \left\{x\in \mathbb{R}^{n_i}\ \big| \ x=z,\ \text{if}\ \|z\| > \gamma ;\
x=0,\ \text{if}\ \|z\| \le \gamma \right\} .\label{group-hard}
\end{align}
where $y\in \mathbb{R}^n$, $z\in \mathbb{R}^{n_i}$, $\gamma \ge 0$. The proposed algorithm is given as follows.
\begin{algorithm}[H]
\caption{PIHT-SGB for \eqref{GBL0}.}\label{alg-PIHT-SGB}
{Choose an arbitrary $x_0 \in \Omega$. Set $\lambda$, $\mu$, $\tau \in \mathbb{R}^n_{+}$. Set $k = 0$.}

(1) Solve the subproblem
\begin{align}
&s_{\tau}(x^k):= x^k - \tau \nabla f(x^k) ,\nonumber\\
&z^k:= \textbf{H}\left(s_{\tau}(x^k);\, 2\lambda \tau \right) , \label{group-iteration-2} \\
&x^{k+1}_{G_i}:= \textbf{H}_{G_i}\left( z^k_{G_i};\,  \sqrt{2\tau(\lambda \|z^k_{G_i}\|_0 + \mu)} \right) . \label{group-iteration-3}
\end{align}

(2) Set $k \gets k + 1$ and go to step (1).
\end{algorithm}
\begin{proposition}\label{pro-proximalgroup}
Suppose $\{x^k\}$ is generated by Algorithm \ref{alg-PIHT-SGB}. Then it holds that
\begin{align}\label{eq-groupproximal-equi}
x^{k+1} \in {\rm{Prox}}_{\tau p(\cdot)}(x^k - \tau \nabla f(x^k)).
\end{align}
Moreover, if $\|x^{k+1} - x^{k}\|=0$, then $x^{k}$ is a $\tau$-stationary point.
\end{proposition}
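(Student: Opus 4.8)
The plan is to verify that the two nested hard-thresholding steps \eqref{group-iteration-2}--\eqref{group-iteration-3} executed by Algorithm \ref{alg-BGIHT} reproduce exactly an admissible selection from the closed-form proximal map established in Theorem \ref{the-proximalgroup}, evaluated at $s = s_{\tau}(x^k) = x^k - \tau \nabla f(x^k)$. Once the membership \eqref{eq-groupproximal-equi} is established, the singleton and stationarity claims follow by routine substitution.

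First I would show that the intermediate vector $z^k$ produced by \eqref{group-iteration-2} is itself an admissible value of the element-wise closed form \eqref{eq-proximall0-2}, so that it may serve as the vector $z$ appearing in the statement of Theorem \ref{the-proximalgroup}. Comparing the defining rule \eqref{L0-hard} of $\textbf{H}(\cdot; 2\lambda\tau)$ with \eqref{eq-proximall0-2} of Proposition \ref{pro-proximall0}, the two agree componentwise on the strict cases $|y_i|^2 - [\Pi_{\Omega}(y)-y]_i^2 > 2\lambda\tau$ and $|y_i|^2 - [\Pi_{\Omega}(y)-y]_i^2 < 2\lambda\tau$. At the tie $|y_i|^2 - [\Pi_{\Omega}(y)-y]_i^2 = 2\lambda\tau$, the convention $\le$ in \eqref{L0-hard} assigns $x_i = 0$, which is precisely one of the two admissible values listed in \eqref{eq-proximall0-2}. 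Hence $z^k$ qualifies as the $z$ of Theorem \ref{the-proximalgroup}.

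Next I would compare the group-wise update \eqref{group-iteration-3} with the closed form \eqref{eq-the-groupproximal}. Writing $\gamma_i := \sqrt{2\tau(\lambda\|z^k_{G_i}\|_0 + \mu)}$, rule \eqref{group-hard} sets $x^{k+1}_{G_i} = z^k_{G_i}$ when $\|z^k_{G_i}\| > \gamma_i$ and $x^{k+1}_{G_i} = 0$ when $\|z^k_{G_i}\| \le \gamma_i$. This matches the first case of \eqref{eq-the-groupproximal} on $\|z^k_{G_i}\| > \gamma_i$, the third case on $\|z^k_{G_i}\| < \gamma_i$, and at the boundary $\|z^k_{G_i}\| = \gamma_i$ it returns $0$, which Theorem \ref{the-proximalgroup} lists as an admissible choice. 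Since the proof of Theorem \ref{the-proximalgroup} shows the minimization defining ${\rm{Prox}}_{\tau p(\cdot)}$ is group-separable, collecting these admissible group choices over all groups yields $x^{k+1} \in {\rm{Prox}}_{\tau p(\cdot)}(s_{\tau}(x^k))$, which is \eqref{eq-groupproximal-equi}.

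The remaining two assertions are immediate. If ${\rm{Prox}}_{\tau p(\cdot)}(s_{\tau}(x^k))$ is a singleton, then the membership \eqref{eq-groupproximal-equi} forces $x^{k+1} = {\rm{Prox}}_{\tau p(\cdot)}(s_{\tau}(x^k))$. Finally, if $\|x^{k+1} - x^k\| = 0$, then $x^{k+1} = x^k$, and substituting this into \eqref{eq-groupproximal-equi} gives $x^k \in {\rm{Prox}}_{\tau p(\cdot)}(x^k - \tau\nabla f(x^k))$, which is precisely the defining relation \eqref{eq-stationary} of a $\tau$-stationary point. The only genuinely delicate point is the bookkeeping at the two tie thresholds: one must confirm that the strict versus non-strict ($>$ versus $\le$) conventions built into $\textbf{H}$ and $\textbf{H}_{G_i}$ always select a value lying inside the set-valued proximal map rather than stepping outside it. Everything else is direct case matching.
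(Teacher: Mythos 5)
Your proposal is correct and follows essentially the same route as the paper's own proof: it identifies the two hard-thresholding steps \eqref{group-iteration-2}--\eqref{group-iteration-3} as admissible selections from the closed forms in Proposition \ref{pro-proximall0} and Theorem \ref{the-proximalgroup}, and then reads off the singleton and stationarity claims by direct substitution. If anything, your explicit bookkeeping at the two tie thresholds (and your use of the threshold $2\lambda\tau$, consistent with \eqref{L0-hard} and \eqref{group-iteration-2}, where the paper's proof text writes $\sqrt{2\lambda\tau}$) is more careful than the paper's, which dismisses these points as obvious.
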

\begin{proof}
By \eqref{L0-hard} and Proposition \ref{pro-proximall0}, for $z^k:= \textbf{H}\left(s_{\tau}(x^k);\, \ {2\lambda \tau} \right)$, it is obvious that $z^k \in {\rm{Prox}}_{\tau \lambda \|\cdot\|_0}(x^k - \tau \nabla f(x^k))$. Consider $x^{k+1}_{G_i}:= \textbf{H}_{G_i}\left( z^k_{G_i};\, \  \sqrt{2\tau(\lambda \|z^k_{G_i}\|_0 + \mu)} \right)$, it is not hard to see that $x^{k+1}_{G_i}$ is a special case in \eqref{eq-the-groupproximal}. Together with {Theorem \ref{the-proximalgroup}}, we obtain \eqref{eq-groupproximal-equi}.

If ${\rm{Prox}}_{\tau p(\cdot)}$ is a singleton, then there is no index $i\in [q]$ such that $ \|z^k_{G_i}\| = \sqrt{2\tau (\lambda \|z^k_{G_i}\|_0 + \mu )}$ from \eqref{eq-the-groupproximal}. Similarly, there is no index $i\in [n]$ such that $\left(s_{\tau}(x^k)\right)_i^2 - \left( \Pi_{\Omega}(s_{\tau}(x^k)) - s_{\tau}(x^k)\right)_i^2 = 2\lambda \tau$ from \eqref{eq-proximall0-2}. Therefore the mapping $\textbf{H} (\cdot;\, {2\lambda \tau} )$ in \eqref{L0-hard} is equivalent to ${\rm{Prox}}_{\tau \lambda\|\cdot\|_0}(\cdot)$ in \eqref{eq-proximall0-2}. Similarly, $\textbf{H}_{G_i}\left( \cdot;\  \sqrt{2\tau(\lambda \|\cdot\|_0 + \mu)} \right)$ is equivalent to \eqref{eq-the-groupproximal} by \eqref{group-hard} and \eqref{eq-the-groupproximal}. {Combining with Theorem \ref{the-proximalgroup}, we obtain \eqref{eq-groupproximal-equi} }.

If $x^{k+1} = x^{k}$, together with \eqref{eq-groupproximal-equi}, {it holds that} $x^{k} \in {\rm{Prox}}_{\tau p(\cdot)}(x^k - \tau \nabla f(x^k))$. By the definition of $\tau$-stationary point in \eqref{eq-stationary}, the proof is complete.
\end{proof}
\section{Convergence and Complexity Analysis}
{In this section}, we {will} analyze the convergence and complexity of our method. We first provide the connection between \eqref{GBL0} and a constrained convex optimization problem regarding the global minimum point. Then we will prove that the number of changes in the support set of {the} iteration sequence is finite {and the sequence generated by the algorithm converges} to a local minimizer. Finally, we conduct an iterative complexity analysis of our method.

Our subsequent results require the strong smoothness and convexity of $f$.
\begin{definition}
{$f$ is strongly smooth with constant $L>0$} if
\begin{align}\label{eq-2-4}
f(z) \le f(x) + \langle \nabla f(x),z-x\rangle + \frac{L}{2}\|z-x\|^2,\ \forall \ x,\ z\in \mathbb{R}^n ,
\end{align}
$f$ is strongly convex {with} constant $\ell > 0$ if
\begin{align}\label{eq-2-5}
f(z) \ge f(x) + \langle \nabla f(x),z-x\rangle + \frac{\ell}{2}\|z-x\|^2,\ \forall \ x,\ z\in \mathbb{R}^n .
\end{align}
\end{definition}
Throughout this paper, we set {$\tau<\frac{1}{L}$}.
\subsection{Related constrained problem}
{To facilitate our analysis}, we introduce the definition of support optimal (SO) point from \cite{doi:10.1137/17M1116544}.
\begin{definition}
{\rm{(support optimality)}}. A vector $x^* \in \Omega$ is called a support optimal (SO) point of \eqref{PBL0} if $x^*$ is a global minimizer of {the following problem}
\begin{align}\label{eq-GSO}
\min\limits_{x\in \mathbb{X}} f(x),
\end{align}
where $\mathbb{X} := \{ x\in \Omega\, \mid \, {\rm{supp}} (x) \subseteq {\rm{supp}} (x^*) \}$.
\end{definition}
The following {lemma} demonstrates the relationship between a support optimal (SO) point and the minimizer of \eqref{PBL0}.
\begin{lemma}\label{lem-GBL0GSO}
Let $x^* \in \Omega$. Then, the following statements hold.
\begin{description}
\item{(i)} If $x^*$ is a global minimizer of \eqref{PBL0}, then $x^*$ is a SO point.
\item{(ii)} {If $f$ is convex and $x^*$ is a SO point} of \eqref{PBL0}, then $x^*$ is a local minimizer of \eqref{PBL0}.
\end{description}
\end{lemma}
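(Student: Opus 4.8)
The plan is to establish the two implications by separate arguments: part (i) follows directly from the monotonicity of the sparsity penalties under support inclusion, while part (ii) is a first-order analysis of the reduced convex program combined with the fact that enlarging the support is penalized.

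For part (i), set $S := {\rm supp}(x^*)$ and take any competitor $x \in \mathbb{X}$, so ${\rm supp}(x) \subseteq S$ and $x \in \Omega$. Support inclusion gives $\|x\|_0 \le \|x^*\|_0$; furthermore every group that is nonzero for $x$ contains an index of ${\rm supp}(x) \subseteq S$ and is thus nonzero for $x^*$, so $\|x\|_{2,0} \le \|x^*\|_{2,0}$. As both $x$ and $x^*$ lie in $\Omega$, the indicator terms vanish and the global optimality of $x^*$ for $\phi$ rearranges to $f(x^*) - f(x) \le \lambda(\|x\|_0 - \|x^*\|_0) + \mu(\|x\|_{2,0} - \|x^*\|_{2,0})$. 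Since $\lambda,\mu \ge 0$, the right-hand side is nonpositive, whence $f(x^*) \le f(x)$ for every $x \in \mathbb{X}$ and $x^*$ is a SO point.

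For part (ii) I would first exploit continuity: for $i \in S$ one has $x_i^* \neq 0$, so with $r_0 := \min_{i\in S}|x_i^*| > 0$ every $x$ with $\|x-x^*\| < r_0$ satisfies ${\rm supp}(x) \supseteq S$. Writing $T := {\rm supp}(x)\setminus S$ for such $x \in \Omega$, the support can only grow, giving $\|x\|_0 = \|x^*\|_0 + |T|$ and $\|x\|_{2,0} \ge \|x^*\|_{2,0}$, hence $\lambda\|x\|_0 + \mu\|x\|_{2,0} \ge \lambda\|x^*\|_0 + \mu\|x^*\|_{2,0} + \lambda|T|$. To control the smooth part, note $\mathbb{X} = \Omega \cap \{x : x_{\overline S}=0\}$ is convex and $x^*$ minimizes the convex $f$ over it, so $\langle \nabla f(x^*), y-x^*\rangle \ge 0$ for all $y \in \mathbb{X}$; applying this to $y$ with $y_S = x_S$ and $y_{\overline S}=0$ (admissible because $0 \in [-l_i,u_i]$) yields $\langle [\nabla f(x^*)]_S, x_S - x_S^*\rangle \ge 0$. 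Convexity of $f$ together with $x^*_{\overline S}=0$ then gives
\[
f(x)-f(x^*) \ge \langle \nabla f(x^*), x-x^*\rangle = \langle [\nabla f(x^*)]_S, x_S-x_S^*\rangle + \langle [\nabla f(x^*)]_{\overline S}, x_{\overline S}\rangle \ge \langle [\nabla f(x^*)]_{\overline S}, x_{\overline S}\rangle .
\]
Combining with the penalty bound and Cauchy--Schwarz, $\phi(x)-\phi(x^*) \ge \lambda|T| - \|\nabla f(x^*)\|\,\|x-x^*\|$; this is nonnegative when $T=\emptyset$ (then $x_{\overline S}=0$), and when $T\neq\emptyset$ one has $|T|\ge 1$, so taking $r := \min\{r_0,\ \lambda/\|\nabla f(x^*)\|\}$ makes the bound $\ge 0$ on $B(x^*,r)$. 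Thus $\phi(x)\ge\phi(x^*)$ throughout $B(x^*,r)$ and $x^*$ is a local minimizer.

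I expect the main obstacle to be the final absorption step in part (ii): the first-order term $\langle [\nabla f(x^*)]_{\overline S}, x_{\overline S}\rangle$ carries no definite sign and vanishes only to first order, so it must be dominated by the penalty $\lambda|T|$ paid for activating new coordinates. This forces the radius to scale like $\lambda/\|\nabla f(x^*)\|$ and relies crucially on $\lambda > 0$: if $\lambda=0$, a perturbation that activates a new coordinate inside an already-nonzero group incurs no penalty while possibly decreasing $f$, so the SO property alone would not guarantee local minimality. The argument therefore genuinely uses strict positivity of the element-wise weight $\lambda$.
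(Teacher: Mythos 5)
Your proof is correct and follows essentially the same route as the paper: part (i) is the identical monotonicity argument, and part (ii) uses the same radius $\min\{\lambda/\|\nabla f(x^*)\|,\ \min_{i\in {\rm supp}(x^*)}|x^*_i|\}$ and the same mechanism, namely that the penalty $\lambda$ paid for activating a new coordinate dominates the linearized change of $f$ obtained from convexity and Cauchy--Schwarz. Your additional step of invoking first-order optimality of the reduced problem to discard the $\langle [\nabla f(x^*)]_S, x_S - x^*_S\rangle$ term is a harmless refinement (and your use of the open ball cleanly avoids the boundary-equality quibble in the paper's closed-ball contradiction argument); the paper simply applies Cauchy--Schwarz to the full inner product $\langle \nabla f(x^*), x - x^*\rangle$.
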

\begin{proof}
(i). For any $x\in \mathbb{X}$, it holds that {$\|x\|_0 \le \|x^*\|_0$}, $\|x\|_{2,0} \le \|x^*\|_{2,0}$ and $\delta_{\Omega}(x) = \delta_{\Omega}(x^*) = 0$. Together with the {fact that $x^*$ is a global minimizer of \eqref{PBL0}}, we obtain that {for any $x\in \mathbb{X}$,}
\begin{align*}
f(x^*) + \lambda \|x^*\|_0 + \mu \|x^*\|_{2,0} \le f(x) + \lambda \|x\|_0 + \mu \|x\|_{2,0} \le f(x) + \lambda \|x^*\|_0 + \mu \|x^*\|_{2,0}.
\end{align*}
Thus, $f(x^*) \le f(x)$ holds over $\mathbb{X}$ which implies that $x^*$ is a global minimizer of  \eqref{eq-GSO}.

(ii). {Let} $x^*$ {be} a global {minimizer} of \eqref{eq-GSO}. Denote $\mathcal{S}_* := {\rm{supp}} (x^*)$. {For the case where $x^*\neq 0$, define
\begin{align*}
\epsilon := \min \left\{ \lambda / \|\nabla f(x^*)\|,\ \min\limits_{i\in \mathcal{S}_*} |x^*_i| \right\} {>} 0.
\end{align*}
Then it suffices to show that, for each $x\in \mathbb{B}_{\epsilon} (x^*) := \{ x\in \Omega \mid \, \|x-x^*\| {<} \epsilon \}$, the following holds
\begin{align}\label{eq-phix}
\phi(x) \ge \phi(x^*).
\end{align}
{To that end}, for any $x\in \mathbb{B}_{\epsilon} (x^*)$, it is obvious that $\delta_{\Omega}(x)=\delta_{\Omega}(x^*)=0$. Now, we claim $\mathcal{S}_* \subseteq {\rm{supp}} (x)$. { Indeed}, if there is $j\in \mathcal{S}_*$ {such that} $j\notin {\rm{supp}} (x)$, then we derive a contradiction
\begin{align*}
{\epsilon \le \min\limits_{i\in \mathcal{S}_*} |x^*_i| \le |x^*_j| = |x^*_j -x_j| \le \|x-x^*\| <} \epsilon .
\end{align*}
Therefore $\mathcal{S}_* \subseteq {\rm{supp}} (x)$. If $\mathcal{S}_* = {\rm{supp}} (x)$,  then $\|x\|_0 = \|x^*\|_0 $ and $\|x\|_{2,0} = \|x^*\|_{2,0}$. It follows from \eqref{eq-GSO} that $f(x) \ge f(x^*)$. It holds that
\begin{align*}
f(x) + \lambda \|x\|_0 + \mu \|x\|_{2,0} + \delta_{\Omega}(x) \ge f(x^*) + \lambda \|x^*\|_0 + \mu \|x^*\|_{2,0} + \delta_{\Omega}(x^*).
\end{align*}
That is, \eqref{eq-phix} holds. If $\mathcal{S}_* \subset {\rm{supp}} (x)$, then $\|x\|_0 \ge \|x^*\|_0 +1$ and $\|x\|_{2,0} \ge \|x^*\|_{2,0}$. Combining with the convexity of $f$, we obtain that
\begin{align}\label{eq-convexphi}
\phi(x) - \phi(x^*) &\ge f(x) - f(x^*) + \lambda \nonumber\\
&\ge \langle \nabla f(x^*),x-x^*\rangle + \lambda \nonumber\\
&\ge -\|\nabla f(x^*)\|\|x-x^*\| + \lambda .
\end{align}
Because $x\in \mathbb{B}_{\epsilon} (x^*)$, it holds that
\begin{align}\label{eq-convexphi2}
\phi(x) - \phi(x^*) \ge -\|\nabla f(x^*)\|\|x-x^*\| + \lambda \ge -\epsilon \|\nabla f(x^*)\| + \lambda \ge 0.
\end{align}
Hence, \eqref{eq-phix} is proved for each $x\in \mathbb{B}_{\epsilon} (x^*)$. }

{For the case where $x^* = 0$, let $\epsilon := \lambda / \|\nabla f(x^*)\|$. Then $\epsilon > 0$. It is obvious that $\mathcal{S}_* = \emptyset \subset {\rm{supp}}(x)$. Similarly, it holds that $\|x\|_0 \ge \|x^*\|_0 +1$ and $\|x\|_{2,0} \ge \|x^*\|_{2,0}$, which gives \eqref{eq-convexphi} and \eqref{eq-convexphi2} again. Therefore, \eqref{eq-phix} holds.
The proof is complete.}
\end{proof}
The sufficient decrease {lemma} for the proximal gradient mapping is given below.
\begin{lemma}{\rm{\cite[Lemma 2]{bolte2014proximal}}}\label{lem-Fdecreasing}
Let $f$ be strongly smooth with constant $L>0$. Let $\frac{1}{\tau} > L$ and $x \in \Omega$. For $ y\in
{\rm{Prox}}_{\tau p(\cdot)}(x - \tau \nabla f(x))$, it holds that
\begin{align}\label{eq-Fdecreasing0}
\phi(x) - \phi(y)\ge \frac{1/\tau - L}{2} \|y - x\|^2 .
\end{align}
\end{lemma}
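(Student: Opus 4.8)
The plan is to exploit the variational characterization of $y$ as a minimizer of the proximal subproblem and then sharpen the comparison using the descent inequality coming from strong smoothness. First I would restate the hypothesis $y \in {\rm{Prox}}_{\tau p(\cdot)}(x - \tau \nabla f(x))$ by means of the equivalent form recorded in \eqref{eq-Def-proximal}: $y$ minimizes over $\mathbb{R}^n$ the surrogate
\[
g(w) := f(x) + \langle \nabla f(x), w - x\rangle + \frac{1}{2\tau}\|w-x\|^2 + p(w).
\]
Here the hypothesis $x \in \Omega$ is essential, because it forces $\delta_{\Omega}(x)=0$, so that $g(x) = f(x) + p(x) = \phi(x)$ is finite and $x$ is an admissible competitor against which the minimizer $y$ can be tested. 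Comparing $g(y)\le g(x)$ and cancelling the common term $f(x)$ then yields the key inequality
\[
\langle \nabla f(x), y-x\rangle + \frac{1}{2\tau}\|y-x\|^2 + p(y) \le p(x).
\]

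Next I would invoke the strong smoothness bound \eqref{eq-2-4} with $z=y$, which reads $f(y) \le f(x) + \langle \nabla f(x), y-x\rangle + (L/2)\|y-x\|^2$ and hence supplies the lower bound $\langle \nabla f(x), y-x\rangle \ge f(y) - f(x) - (L/2)\|y-x\|^2$. Substituting this lower bound for the linear term in the displayed key inequality eliminates the inner product and, after collecting the quadratic coefficients, leaves
\[
f(y) + p(y) \le f(x) + p(x) + \left(\frac{L}{2} - \frac{1}{2\tau}\right)\|y-x\|^2.
\]
Recognizing $\phi = f + p$ and using the standing assumption $1/\tau > L$, so that the coefficient $\tfrac{L}{2}-\tfrac{1}{2\tau}$ is negative, a final rearrangement produces exactly $\phi(x) - \phi(y) \ge \tfrac{1/\tau - L}{2}\|y-x\|^2$, which is \eqref{eq-Fdecreasing0}.

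The argument is essentially self-contained once the proximal condition is rewritten, so I do not expect a serious obstacle. The only points requiring care are bookkeeping rather than ideas: first, keeping the direction of the smoothness inequality straight, since an \emph{upper} bound on $f(y)$ must be turned into a \emph{lower} bound on $\langle \nabla f(x), y-x\rangle$ before substitution; and second, making explicit that $x \in \Omega$ is what guarantees $g(x)=\phi(x)<+\infty$, so that the optimality comparison $g(y)\le g(x)$ is legitimate and the $p(x)$ terms cancel cleanly. I would state these two facts plainly and then let the algebra run to the stated conclusion.
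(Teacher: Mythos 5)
Your proof is correct, and it is essentially the standard argument: the paper does not prove this lemma itself but cites \cite[Lemma 2]{bolte2014proximal}, whose proof proceeds exactly as you do, by testing the proximal minimizer $y$ against the feasible competitor $x$ (using $x\in\Omega$ so that $p(x)$ is finite) and then combining with the descent inequality \eqref{eq-2-4}. The algebra in your two displayed inequalities adds up correctly to \eqref{eq-Fdecreasing0}, so there is nothing to fix.
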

The {following} result shows the connection among $\tau$-stationary point, SO point and the minimizer of \eqref{PBL0}.
\begin{lemma}\label{lem-stationarySO}
Let $x^*\in \Omega$. The following results hold.
\begin{description}
\item{(i)} If $x^*$ is {a} global minimizer of \eqref{PBL0}, then for any $\tau$ satisfing $\frac{1}{\tau} > L$, $x^*$ is a $\tau$-stationary point.
\item{(ii)} Suppose that $f$ is convex. Let $x^*$ be {a} $\tau$-stationary point of \eqref{PBL0} for some $\tau >0$. Then $x^*$ is a SO point of \eqref{PBL0}.
\end{description}
\end{lemma}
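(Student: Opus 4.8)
The plan is to handle the two parts separately: part (i) follows almost immediately from the sufficient decrease estimate, while part (ii) requires unwinding the $\tau$-stationarity condition. For part (i), I would invoke Lemma \ref{lem-Fdecreasing} with $x = x^*$. Since $x^*$ minimizes $\phi$ globally it lies in $\Omega$, so for any $y \in {\rm{Prox}}_{\tau p(\cdot)}(x^* - \tau \nabla f(x^*))$ the estimate gives $\phi(x^*) - \phi(y) \ge \frac{1/\tau - L}{2}\|y - x^*\|^2 \ge 0$. Global optimality forces the reverse inequality $\phi(y) \ge \phi(x^*)$, so $\|y - x^*\|^2 \le 0$, whence $y = x^*$. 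Therefore $x^* \in {\rm{Prox}}_{\tau p(\cdot)}(x^* - \tau \nabla f(x^*))$, which is exactly the definition of a $\tau$-stationary point, and since $1/\tau > L$ is the only requirement, this holds for every admissible $\tau$.

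For part (ii), write $s := x^* - \tau \nabla f(x^*)$ and $\mathcal{S}_* := {\rm{supp}}(x^*)$. The first step is to establish the coordinatewise identity $x^*_i = \left[\Pi_{\Omega}(s)\right]_i$ for every $i \in \mathcal{S}_*$. Because $x^* \in {\rm{Prox}}_{\tau p(\cdot)}(s)$, Theorem \ref{the-proximalgroup} supplies an intermediate vector $z \in {\rm{Prox}}_{\tau \lambda \|\cdot\|_0}(s)$ for which each group satisfies $x^*_{G_i} = z_{G_i}$ or $x^*_{G_i} = 0$. If $i \in \mathcal{S}_*$ then $x^*_i \neq 0$, so the group containing $i$ survives the group-thresholding, giving $x^*_{G_i} = z_{G_i}$ and in particular $z_i = x^*_i \neq 0$; by Proposition \ref{pro-proximall0} a nonzero component of $z$ necessarily equals $\left[\Pi_{\Omega}(s)\right]_i$, which yields the claimed identity independently of the boundary cases.

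The second step is to recognize that $x^*$ equals the projection of $s$ onto $\mathbb{X} = \{ x \in \Omega : {\rm{supp}}(x) \subseteq \mathcal{S}_* \}$. Since $\Omega$ is a box, $\Pi_{\Omega}$ acts separately on each coordinate, so $\Pi_{\mathbb{X}}(s)$ vanishes off $\mathcal{S}_*$ and agrees with $\left[\Pi_{\Omega}(s)\right]_i$ on $\mathcal{S}_*$. By the identity just proved and the fact that $x^*_i = 0$ off the support, this is precisely $x^*$, hence $x^* = \Pi_{\mathbb{X}}(x^* - \tau \nabla f(x^*))$. The final step applies Proposition \ref{pro-projection}(i) to the convex set $\mathbb{X}$ to get $\langle x^* - s,\, y - x^* \rangle \ge 0$ for all $y \in \mathbb{X}$; substituting $s$ reduces this to $\langle \nabla f(x^*),\, y - x^* \rangle \ge 0$, and convexity of $f$ then gives $f(y) \ge f(x^*) + \langle \nabla f(x^*),\, y - x^* \rangle \ge f(x^*)$ for all $y \in \mathbb{X}$, so $x^*$ is a global minimizer of $f$ over $\mathbb{X}$, i.e. a SO point.

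I expect the main obstacle to be the first step of part (ii): carefully disentangling the nested element-wise and group-wise thresholding in the definition of a $\tau$-stationary point so as to certify that, on its support, the point coincides with the box projection of $s$. Once this identity is secured and the separability of $\Pi_{\Omega}$ is used to identify $x^*$ with $\Pi_{\mathbb{X}}(s)$, the remainder is the standard projected-gradient-fixed-point argument for a convex objective over a convex set.
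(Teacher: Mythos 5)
Your proof is correct. Part (i) coincides with the paper's argument: both apply the sufficient decrease estimate of Lemma \ref{lem-Fdecreasing} together with global optimality to force any point of ${\rm{Prox}}_{\tau p(\cdot)}(s_{\tau}(x^*))$ to equal $x^*$. For part (ii) you reach the same pivotal identity as the paper, namely $x^* = \Pi_{\mathbb{X}}(s_{\tau}(x^*))$ with $\mathbb{X} = \{x\in\Omega \,:\, {\rm{supp}}(x)\subseteq{\rm{supp}}(x^*)\}$, but by a genuinely different route: you exploit the closed-form, coordinatewise structure of the prox (Theorem \ref{the-proximalgroup} combined with Proposition \ref{pro-proximall0}) to show that $x^*$ agrees with the box projection of $s_{\tau}(x^*)$ on its support, and then use separability of the box to identify this with $\Pi_{\mathbb{X}}(s_{\tau}(x^*))$. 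The paper instead derives the identity purely variationally: since $x^*$ minimizes $\frac{1}{2}\|y-s_{\tau}(x^*)\|^2+\tau\lambda\|y\|_0+\tau\mu\|y\|_{2,0}$ over $\Omega$, restricting the competition to $\mathbb{X}$, where the $\ell_0$ and $\ell_{2,0}$ terms can only decrease, shows that $x^*$ minimizes the distance to $s_{\tau}(x^*)$ over $\mathbb{X}$. The paper's route is shorter and sidesteps the case analysis hidden in the closed form (including the tie cases of the set-valued prox, which your argument handles but must be careful about); your route is more concrete and, in the concluding step, more self-contained: you finish via the projection variational inequality of Proposition \ref{pro-projection}(i) plus convexity of $f$, whereas the paper invokes an external first-order optimality result of Pang (1990). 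Both arguments are valid and establish the lemma.
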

\begin{proof}
(i). Let $1/\tau > L$ and $y\in {\rm{Prox}}_{\tau p(\cdot)}(s_{\tau}(x^*))$. By Lemma \ref{lem-Fdecreasing} and the optimality of $x^*$, it holds that
\begin{align*}
f(x^*) + p(x^*) &\ge \frac{1/\tau - L}{2} \|y-x^*\|^2 + f(y) + p(y) \\
&\ge \frac{1/\tau - L}{2} \|y-x^*\|^2 + f(x^*) + p(x^*).
\end{align*}
Since $\frac{1}{\tau} > L$, we conclude that $y = x^*$. Consequently, $y$ is a $\tau$-stationary point.

(ii). Denote $\mathbb{X} := \{y\in \Omega \, \mid \, \mathcal{S}(y) \subseteq \mathcal{S}(x^*) \}$. By \eqref{eq-stationary} and \eqref{eq-Def-proximal}, it holds that
\begin{align*}
&\ \ \ \ \frac{1}{2}\|x^* - s_{\tau}(x^*)\|^2 + \tau \lambda\|x^*\|_{{0}} + \tau \mu\|x^*\|_{2,0} \\
&{=} \min\limits_{y\in \Omega} \frac{1}{2}\|y - s_{\tau}(x^*)\|^2 + \tau\lambda \|y\|_{{0}} + \tau \mu\|y\|_{2,0} \\
&\le \min\limits_{y\in \mathbb{X}} \frac{1}{2}\|y - s_{\tau}(x^*)\|^2 + \tau \lambda\|{y}\|_0 + \tau \mu\|{y}\|_{2,0}.
\end{align*}
It is obvious that, if $\mathcal{S}(y) \subseteq \mathcal{S}(x^*)$, {then} $\|y\|_0 \le \|x^*\|_{0}$, $\|y\|_{2,0} \le \|x^*\|_{2,0}$. Therefore, it holds that
\begin{align}\label{eq-staule}
&\ \ \ \ \frac{1}{2}\|y - s_{\tau}(x^*)\|^2 + \tau \lambda\|y\|_0 + \tau \mu\|y\|_{2,0} \nonumber\\
&\le \min\limits_{y\in \mathbb{X}} \frac{1}{2}\|y - s_{\tau}(x^*)\|^2 + \tau \lambda\|x^*\|_0 + \tau \mu\|x^*\|_{2,0}  ,
\end{align}
which gives
\begin{align*}
&\ \ \ \ \frac{1}{2}\|x^* - s_{\tau}(x^*)\|^2 \le \min\limits_{y\in \mathbb{X}} \frac{1}{2}\|y - s_{\tau}(x^*)\|^2 .
\end{align*}
It means that $x^* = \Pi_{\mathbb{X}} (s_{\tau}(x^*))$. From {\rm \cite[Proposition 2.3]{Pang1990}}, we know that $x^*$ satisfies the first order necessary condition for \eqref{eq-GSO}. Together with the convexity of $f$, $x^*$ is a global minimizer of \eqref{eq-GSO}. The proof is complete.
\end{proof}
To prove that the nonzero components of the sequence $\{x^k\}$ have a lower bound, we need the following {lemma}.
\begin{lemma}{\rm{\cite[Lemma 3.3]{lu2014iterative}}}\label{lem-notsmall10}
Let $\{z^k\}$ be generated by \eqref{group-iteration-2} in Algorithm \ref{alg-PIHT-SGB}. For all $k\ge 0$, the following holds
\begin{align}\label{eq-delta}
|z^{k+1}_j| \ge \delta := \min\limits_{i\notin T} \delta_i > 0,\ \mbox{if}\ z^{k+1}_j \neq 0,
\end{align}
where $T = \{i\, \mid \, l_i = u_i = 0\}$, and for $i\notin T$, $\delta_i$ is diffined by
\begin{align*}
\delta_i =
\begin{cases}
\min (u_i,\ \sqrt{2\lambda\tau}),\ &\mbox{if}\ l_i = 0, \\
\min (l_i,\ \sqrt{2\lambda\tau}),\ &\mbox{if}\ u_i = 0, \\
\min (l_i,\ u_i,\ \sqrt{2\lambda\tau}),\ &\mbox{otherwise}.
\end{cases}
\end{align*}
\end{lemma}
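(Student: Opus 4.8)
The plan is to exploit the explicit coordinatewise form of the hard-thresholding map $\textbf{H}(\cdot;2\lambda\tau)$ from \eqref{L0-hard} together with the explicit formula for the Euclidean projection onto the box $\Omega$. Fix an iteration index and write $s:=s_{\tau}(x^k)$. By the definition \eqref{L0-hard} of $\textbf{H}$ in \eqref{group-iteration-2}, whenever the $j$-th component of the generated vector is nonzero we necessarily have that component equal to $[\Pi_{\Omega}(s)]_j$ and that the thresholding inequality $|s_j|^2 - [\Pi_{\Omega}(s)-s]_j^2 > 2\lambda\tau$ holds. Thus the whole task reduces to bounding $\big|[\Pi_{\Omega}(s)]_j\big|$ from below on the event that coordinate $j$ survives thresholding, and this bound must be uniform in $k$.

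Since $\Omega$ is the box $\{x:-l\le x\le u\}$, the projection acts coordinatewise by clamping, $[\Pi_{\Omega}(s)]_j=\min\big(u_j,\max(-l_j,s_j)\big)$, which yields exactly three possibilities for a surviving coordinate: the boundary values $u_j$ (when $s_j\ge u_j$) or $-l_j$ (when $s_j\le -l_j$), or the interior value $s_j$ (when $-l_j\le s_j\le u_j$). I would treat these three cases in turn. In the two boundary cases the correction $[\Pi_{\Omega}(s)-s]_j$ is irrelevant: nonzeroness of the surviving component immediately gives $|z^{k+1}_j|=u_j$, respectively $|z^{k+1}_j|=l_j$. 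In the interior case $[\Pi_{\Omega}(s)-s]_j=0$, so the thresholding inequality collapses to $s_j^2>2\lambda\tau$, whence $|z^{k+1}_j|=|s_j|>\sqrt{2\lambda\tau}$.

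It then remains to match these three lower bounds against the piecewise definition of $\delta_j$ and take the minimum. Each branch of the definition of $\delta_i$ is a minimum taken over $\sqrt{2\lambda\tau}$ and whichever of $l_i,u_i$ is positive, so $\delta_j\le\sqrt{2\lambda\tau}$ always, $\delta_j\le u_j$ whenever $u_j$ can occur as a boundary value (that is, in the branches $l_j=0$ and both-positive), and $\delta_j\le l_j$ whenever $-l_j$ can occur (the branches $u_j=0$ and both-positive). Consequently $|z^{k+1}_j|\ge\delta_j$ in every case. Because any coordinate $j$ with $z^{k+1}_j\ne 0$ must satisfy $j\notin T$ (for $i\in T$ one has $l_i=u_i=0$, so $[\Pi_{\Omega}(s)]_i=0$ and the coordinate can never survive), we conclude $|z^{k+1}_j|\ge\min_{i\notin T}\delta_i=\delta$. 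Positivity $\delta>0$ follows since for each $i\notin T$ at least one of $l_i,u_i$ is strictly positive and $\sqrt{2\lambda\tau}>0$, so every $\delta_i$ entering the minimum is strictly positive.

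The main obstacle is bookkeeping rather than a genuine difficulty: one must carefully pair the three projection outcomes with the three branches of $\delta_i$ (the cases $l_i=0$, $u_i=0$, and both positive) and verify that a coordinate in $T$ can never survive the thresholding step, so that restricting the minimum to $i\notin T$ is legitimate. The uniformity of the bound over $k$ is automatic, since the final constant $\delta$ depends only on the data $l,u,\lambda,\tau$ and not on the iterate $x^k$.
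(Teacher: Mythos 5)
Your proposal is correct. One point of context: the paper does not actually prove this lemma --- it imports it wholesale as \cite[Lemma 3.3]{lu2014iterative}, so there is no in-paper proof to compare against. Your argument is a correct, self-contained reconstruction, and it is essentially the argument of the cited source: decompose the surviving coordinate according to the three outcomes of the box projection (clamped at $u_j$, clamped at $-l_j$, or interior), observe that in the interior case the thresholding inequality $|s_j|^2-[\Pi_{\Omega}(s)-s]_j^2>2\lambda\tau$ collapses to $|s_j|>\sqrt{2\lambda\tau}$ because the correction term vanishes, while in the clamped cases the surviving value is exactly $u_j$ or $l_j$, and then match each outcome against the corresponding branch of $\delta_j$. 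Your handling of the two points that are easy to overlook is also right: a coordinate with $l_i=u_i=0$ projects to zero and can never survive, which legitimizes restricting the minimum to $i\notin T$; and the resulting constant $\delta$ depends only on $l,u,\lambda,\tau$, so the bound is automatically uniform in $k$. The only caveat, inherited from the paper rather than introduced by you, is that positivity of $\delta$ requires $\lambda\tau>0$; with $\lambda=0$ the threshold $\sqrt{2\lambda\tau}$ degenerates to $0$ and the claim $\delta>0$ fails, so the lemma implicitly assumes a strictly positive regularization parameter.
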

The following lemma shows that for the sequence $\{x^k\}$, the magnitude of any nonzero components $x^k_i$ cannot be too small for $k\ge 1$.
\begin{lemma}\label{lem-notsmall}
Let $\{x^k\}$ be generated by Algorithm \ref{alg-PIHT-SGB} and $\delta > 0$ be defined as in \eqref{eq-delta}. For all $k > 0$, it holds that
\begin{description}
\item{(i)} $|x^k_j| > \delta$ for $j \in {\rm{supp}} (x^k)$.

\item{(ii)} $\|x^{k+1} - x^{k}\| > \delta$ whenever ${\rm{supp}} (x^k)\neq {\rm{supp}} (x^{k+1})$.
\end{description}
\end{lemma}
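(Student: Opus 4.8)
The plan is to reduce both parts to the entrywise lower bound already established for the $\ell_0$-thresholded iterates in Lemma~\ref{lem-notsmall10}, exploiting the fact that the group step \eqref{group-iteration-3} of Algorithm~\ref{alg-BGIHT} acts on each group in an all-or-nothing fashion and therefore never alters the magnitude of a surviving entry. The only real content is part~(i); part~(ii) then follows by a short support-difference argument.

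For part~(i), I would first note that, for $k>0$, the iterate $x^k$ is produced from $z^{k-1}$ through the map $x^k_{G_i}=\textbf{H}_{G_i}\!\left(z^{k-1}_{G_i};\sqrt{2\tau(\lambda\|z^{k-1}_{G_i}\|_0+\mu)}\right)$ in \eqref{group-iteration-3}. By the definition \eqref{group-hard}, on each group $G_i$ this operator either returns the entire sub-vector $z^{k-1}_{G_i}$ (when $\|z^{k-1}_{G_i}\|$ exceeds the threshold) or the zero vector; in particular it preserves the value of each individual entry it keeps. Hence every nonzero component of $x^k$ coincides with the corresponding component of $z^{k-1}$: for $j\in{\rm{supp}}(x^k)$ we have $x^k_j=z^{k-1}_j$ with $z^{k-1}_j\neq 0$. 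Applying Lemma~\ref{lem-notsmall10} to $z^{k-1}$ yields $|x^k_j|=|z^{k-1}_j|\ge\delta$, giving the bound with $\delta$ from \eqref{eq-delta}. (For the base index $k=1$, which relies on $z^0$, I would observe that the bound of Lemma~\ref{lem-notsmall10} is purely structural, depending only on the thresholding map $\textbf{H}$ and on the box data $l,u,\tau$, so it in fact holds for every $z^m$, $m\ge 0$.)

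For part~(ii), I would argue through the symmetric difference of the two supports. If ${\rm{supp}}(x^k)\neq{\rm{supp}}(x^{k+1})$, there is an index $j$ belonging to exactly one of them. If $j\in{\rm{supp}}(x^k)\setminus{\rm{supp}}(x^{k+1})$, then $x^{k+1}_j=0$ and $|x^{k+1}_j-x^k_j|=|x^k_j|>\delta$ by part~(i) applied to $x^k$; symmetrically, if $j\in{\rm{supp}}(x^{k+1})\setminus{\rm{supp}}(x^k)$, then $x^k_j=0$ and $|x^{k+1}_j-x^k_j|=|x^{k+1}_j|>\delta$ by part~(i) applied to $x^{k+1}$. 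In either case $\|x^{k+1}-x^k\|\ge|x^{k+1}_j-x^k_j|>\delta$, as claimed.

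The step requiring the most care — and the main obstacle — is the transfer of the bound from $z^{k-1}$ to $x^k$ in part~(i). I must verify that the group step \eqref{group-iteration-3} cannot manufacture a new nonzero component absent from $z^{k-1}$, and cannot shrink a surviving component below its value in $z^{k-1}$. Both facts are exactly the all-or-nothing structure of \eqref{group-hard}: the operator $\textbf{H}_{G_i}$ outputs either $z^{k-1}_{G_i}$ verbatim or the zero vector, so it can only delete entire groups and never rescale an entry. Consequently no small nonzero components are introduced, and the lower bound $\delta$ supplied by Lemma~\ref{lem-notsmall10} passes through to $x^k$ unchanged. Once~(i) is secured, the support-difference argument gives~(ii) immediately.
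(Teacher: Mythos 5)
Your proposal takes essentially the same route as the paper: part~(i) is obtained by transferring the entrywise bound of Lemma~\ref{lem-notsmall10} from $z^{k-1}$ to $x^k$ through the all-or-nothing structure of the group step \eqref{group-iteration-3}, and part~(ii) follows by selecting an index in the symmetric difference of the two supports and invoking~(i). If anything, you make explicit the two points the paper's proof compresses into ``we can easily see'' --- that \eqref{group-hard} never rescales a surviving entry, and that the bound also covers the base iterate $z^0$ --- so the argument is sound and matches the published one.
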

\begin{proof}
(i) By Lemma \ref{lem-notsmall10}, together with
\eqref{group-iteration-2} and \eqref{group-iteration-3} in Algorithm \ref{alg-PIHT-SGB}, we can easily see that (i) holds.

(ii) Suppose that $\text{supp} (x^k)\neq \text{supp} (x^{k+1})$. {For simplicity}, let $i\in [n]$ such that $x^{k+1}_i = 0$ and $x^{k}_i \neq 0$. By (i), it holds that
$$
\|x^{k+1} - x^{k}\| \ge |x^{k+1}_i-x^{k}_i| > \delta.
$$
The proof is complete.
\end{proof}
The following {lemma} displays that the support sets of $\{x^k\}$ remain unchanged when $k$ is sufficiently large, which is important for our convergence analysis.
\begin{lemma}\label{lem-unchange}
Let $f$ be strongly smooth with constant $L>0$. Let $\frac{1}{\tau} >L$ and $\{x^k\}$ be generated by Algorithm \ref{alg-PIHT-SGB}. It holds that
\begin{enumerate}
\item [(i)] $\phi(x^{k}) - \phi(x^{k+1})\ge \frac{1/\tau - L}{2} \|x^{k+1} - x^{k}\|^2$ .

\item [(ii)] $\{\phi(x^k)\}$ is non-increasing and convergent, and $\|x^{k+1}-x^{k} \| \to 0$ as $k\to \infty$.
\item [(iii)] Moreover, {there exists $K>0$ such that} ${\rm{supp}} (x^k) = {\rm{supp}} (x^{k+1})$ {for all $k \ge K$}.
\end{enumerate}
\end{lemma}
\begin{proof}
{(i) By Proposition \ref{pro-proximalgroup}, and Lemma \ref{lem-Fdecreasing}, we can easily  obtain (i).}

(ii) Denote $\eta = \frac{1/\tau - L}{2}$ and {$d^k = x^{k+1}-x^k$}. By result (i), we obtain that
\begin{align*}
\phi(x^{k+1} )- \phi(x^{k} ) \leq - \eta \|d^{k} \|^{2} .
\end{align*}
This implies that $\{\phi(x^k)\}$ is non-increasing and convergent due to the fact that $f$ is bounded from below. Moreover, it holds that
\begin{align*}
\sum_{k=0}^{\infty} \eta \|d^{k} \|^{2}  \leq \sum_{k=0}^{\infty}\left(\phi(x^{k} )-\phi(x^{k+1} )\right)  =\phi(x^{0} )- \lim\limits_{k\to \infty} \phi(x^k) < +\infty .
\end{align*}
Consequently, it holds that $\|x^{k+1}-x^{k} \| \to 0$, as $k\to \infty$.

(iii) Together with Lemma \ref{lem-notsmall} (ii), we obtain that $\text{supp} (x^k) = \text{supp} (x^{k+1})$ holds for sufficiently large $k$.
\end{proof}
\subsection{Convergence and complexity analysis}
We are ready to show that the sequence $\{x^k\}$ converges to a local minimizer of \eqref{GBL0}.
\begin{theorem}\label{the-globalconvergence}
Let $f$ be strongly smooth with constant $L>0$. Let $\frac{1}{\tau} >L$ and $\{x^k\}$ be generated by Algorithm \ref{alg-PIHT-SGB}. {The following results hold.}
\begin{description}
\item{(i)} Any accumulation point $x^*$ of $\{x^k\}$ is a $\tau$-stationary point of \eqref{PBL0}.
\item{(ii)} If $f$ is convex, {then $\{x^k\}$ converges to a local minimizer $x^*$} of \eqref{PBL0} and ${\rm{supp}}(x^k) \to {\rm{supp}}(x^*)$.
\end{description}
\end{theorem}
\begin{proof}
(i). {Let $\{x^{s_i}\}$ be the convergent subsequence of $\{x^k\}$ that converges to $x^*$. Since $x^{s_i} \to x^*$ and $\|x^{k+1} - x^k\| \to 0$ from Lemma \ref{lem-unchange}, we have $x^{s_{i+1}} \to x^*$}. Note that $x^{s_i + 1} \in {\rm{Prox}}_{\tau p(\cdot)}(s_{\tau}(x^{s_i}))$ and \eqref{eq-Def-proximal}. For any $y\in \Omega$, it holds that
\begin{align*}
&\ \ \ \ \frac{1}{2}\|x^{s_i + 1} - s_{\tau}(x^{s_i})\|^2 + \tau \lambda\|x^{s_i + 1}\|_0 + \tau \mu\|x^{s_i + 1}\|_{2,0} \\
&\le \frac{1}{2}\|y - s_{\tau}(x^{s_i})\|^2 + \tau \lambda\|y\|_0 + \tau \mu\|y\|_{2,0}.
\end{align*}
Letting $i\to \infty$,  we obtain that
\begin{align*}
&\ \ \ \ \frac{1}{2}\|x^* - s_{\tau}(x^*)\|^2 + \tau \lambda\|x^*\|_0 + \tau \mu\|x^*\|_{2,0} \le \frac{1}{2}\|y - s_{\tau}(x^{*})\|^2 + \tau \lambda\|y\|_0 + \tau \mu\|y\|_{2,0}.
\end{align*}
By the closedness of $\Omega$, it holds that $x^* \in \Omega$. Therefore, $x^{*} \in {\rm{Prox}}_{\tau p(\cdot)}(s_{\tau}(x^{*}))$, implying that $x^*$ is a $\tau$-stationary point.

(ii). {It follows from Lemma \ref{lem-unchange} that there exist $K>0$ and $\mathcal{S}\subset \{1,\cdots,n\}$ such that \begin{align}\label{eq-theglobal-1}
{\rm{supp}} (x^k) = \mathcal{S},
\end{align}
for all $k\ge K$. Together with \eqref{eq-pro-1} and \eqref{eq-groupproximal-equi}, we obtain that
\begin{align*}
x^{k+1} \in \arg \min\limits_{x \in \mathbb{R}^n} \Psi_i(x;\, s_{\tau}(x^k)) := \frac{1}{2}\|x - s_{\tau}(x^k)\|^2 + \delta_{\Omega_{\mathcal{S}}}(x),
\end{align*}
where $\Omega_{\mathcal{S}} := \{x\in \Omega \, \mid \, {\rm{supp}}(x)\subseteq \mathcal{S}\}$. This shows that $\{x^k\}$ is a sequence generated by the projected gradient method for the following constrained problem
\begin{align*}
\min\limits_{x\in \Omega_{\mathcal{S}}} f(x).
\end{align*}
By \cite[Theorem 2.2]{lu2014iterative}, we obtain that $x^k \to x^*$, where
\begin{align*}
x^* \in \arg \min\limits_{x\in \Omega_{\mathcal{S}}} f(x).
\end{align*}
By definition, $x^*$ is a SO point of \eqref{PBL0}. With Lemma \ref{lem-GBL0GSO}, $x^*$ is local minimizer of \eqref{PBL0}.}

{
From Lemma \ref{lem-notsmall}, we know that $|x^k_i| > \delta$ for sufficiently large $k$ and $i\in \mathcal{S}$. Therefore, {it is not hard} to see that $|x^*_i| \ge \delta$ for $i\in \mathcal{S}$ and $|x^*_i| =0$ for $i\notin \mathcal{S}$ by the fact that $x^k \to x^*$. Consequently, $\mathcal{S} = {\rm{supp}}(x^*)$ for sufficiently large $k$. Together with \eqref{eq-theglobal-1}, we obtain that ${\rm{supp}}(x^k) \to {\rm{supp}}(x^*)$. The proof is complete.}
\end{proof}

\begin{theorem}\label{the-supp-complexity}
{Let $f$ be {strongly smooth} with constant $L > 0$. Also we assume that $f$ is convex} {and $\frac{1}{\tau} > L$}. Let $\{x^k\}$ be generated by Algorithm \ref{alg-PIHT-SGB}. The following results hold.
\begin{enumerate}
  \item [(i)] $\{x^{k}\}$ is a convergent sequence.
  \item [(ii)] {Let $ x^*$ be the point such that $x^k \to x^*$.} Then, the number of changes of ${\rm{supp}} (x^k)$ is at most $\frac{2(\phi(x^0) - \phi^*)}{\delta^2(\frac{1}{\tau} - L)}$, {where $\phi^* $ denotes $ \phi(x^*)$}.
\end{enumerate}
\end{theorem}
\begin{proof}
(i) directly follows from
Theorem \ref{the-globalconvergence}. To show (ii), by Lemma \ref{lem-unchange}, denote the number of changes of $\text{supp} (x^k)$ is $C > 0$. Without loss of generality, suppose $\text{supp} (x^k)$ only changes at $k_{j+1}$. That is, $\text{supp} ({x^{k_j}}) \neq \text{supp} (x^{k_{j+1}})$ for each $j\in \{1,\, \cdots,\, C\}$. By Lemma \ref{lem-notsmall}, we obtain that
$$ \|x^{k_j+1}-x^{k_j} \| > \delta ,\  j\in \{1,\, \cdots,\, C\},$$
which, together with result (i) of Lemma \ref{lem-unchange}, implies that
\begin{align}\label{eq-Fdecreasing-2}
\phi(x^{k_j}) - \phi(x^{k_{j}+1})\ge \delta^2 \frac{\frac{1}{\tau} - L}{2} , \ j\in \{1,\, \cdots,\, C\}.
\end{align}
Summing up these inequalities and using the monotonicity of $\{\phi(x^k)\}$, we obtain that
\begin{align*}
C\delta^2\frac{\frac{1}{\tau} - L}{2} \le \phi(x^{k_1}) - \phi(x^{k_{C}+1}) \le \phi(x^0) - \phi^*,
\end{align*}
which gives
\begin{align}\label{eq-totalnum}
C \le \frac{2(\phi(x^0) - \phi^*)}{\delta^2 (\frac{1}{\tau} - L)}.
\end{align}
The proof is complete.
\end{proof}

Define
\begin{align}
\Gamma(x^*) &=\left(s_{\tau}(x^*)\right)^2 - \left( d_{\tau}(x^*)\right)^2,\nonumber \\
\rho_i &= \left| (s_{\tau}(x^*))_i \right| + \left| (d_{\tau}(x^*))_i \right|,\ \text{for}\ i\in [n], \nonumber \\
\alpha &= \min\limits_{I\subseteq [n]}\left\{ \min\limits_{i} \left| \left(\Gamma(x^*) \right)_i - 2\tau \lambda \right| \, \big| \, x^* \in \arg\min\limits_{x\in \mathbb{R}^n}\{f(x)\, \mid \, x\in \Omega ,\ x_{I} = 0\} \right\} , \label{eq-alpha}\\
\beta &= \max\limits_{I\subseteq [n]}\left\{ \max\limits_{i}\rho_i \, \big| \, x^* \in \arg\min\limits_{x\in \mathbb{R}^n}\{f(x)\, \mid \, x\in \Omega , x_{I} = 0\} \right\}. \label{eq-beta}
\end{align}
We now establish the iteration complexity for the PIHT-SGB method. {The following theorem shares the similar conclusion as that in {\rm{\cite[Theorem 3.5 (ii)]{lu2014iterative}}}. The proof is almost identical to that in \cite[Theorem 3.5 (ii)]{lu2014iterative}. So we omit the proof here.}
\begin{theorem}\label{the-complexity}
Assume that $f$ is a strongly smooth and strongly convex function with constant $L$, $\ell > 0$. Let $\alpha > 0$ in \eqref{eq-alpha}. Suppose $\frac{1}{\tau} > L$ such that $\alpha > 0$. Let $\{x^k\}$ be generated by Algorithm \ref{alg-PIHT-SGB}, $x^* = \lim\limits_{k\to \infty} x^k$, $\phi^* = \phi(x^k)$. Then, for any given $\epsilon >0$, the total iterations number by Algorithm \ref{alg-PIHT-SGB} for finding a $\epsilon$-local-optimal solution $x_{\epsilon} \in \Omega$ satisfying ${\rm{supp}}(x_{\epsilon}) = {\rm{supp}}(x^{*})$ and $ \phi(x_{\epsilon}) \le \phi^* + \epsilon $ is at most $\frac{2}{\tau\ell}\log \frac{\theta}{\epsilon}$, where
\begin{align}
\theta &= 2^{\frac{\omega + 3}{2}}(\phi(x^0) - \phi^*) ,\nonumber \\
\omega &= \max\limits_{t}\left\{ (d-2b)t - bt^2\, \bigg| \, 0\le t\le \frac{2(\phi(x^0) - \phi^*)}{\delta^2 (\frac{1}{\tau} - L)}   \right\} ,\label{eq-omega}\\
b &= \frac{\delta^2(\frac{1}{\tau} - L)}{2(\phi(x^0) - \underline{\phi}^*)},\ \gamma = \ell\left(\sqrt{2\alpha + \beta^2} - \beta\right)^2/32,  \label{eq-gamma}\\
d &= 2\log (\phi(x^0) - \underline{\phi}^*) + 4 - 2\log \gamma + b. \nonumber
\end{align}
\end{theorem}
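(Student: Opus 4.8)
The plan is to partition the run of Algorithm \ref{alg-BGIHT} into consecutive \emph{phases} on each of which $\mathrm{supp}(x^k)$ is constant, and to bound the length of each phase through the linear rate of Proposition \ref{pro-fcom}. First I would observe that whenever the support equals a fixed set $F$, the step \eqref{group-iteration-2}--\eqref{group-iteration-3} coincides with the projected gradient iteration $x^{k+1}=\Pi_{\Omega_F}(x^k-\tau\nabla f(x^k))$ on the face $\Omega_F:=\{x\in\Omega: x_{\bar F}=0\}$: because $\Omega$ is a box the projection is separable, and on the retained coordinates both hard-thresholding maps \eqref{L0-hard}--\eqref{group-hard} act as the identity. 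On such a phase $\|x\|_0$ and $\|x\|_{2,0}$ are constant, so minimizing $\phi$ reduces to minimizing $f$ over $\Omega_F$, and Proposition \ref{pro-fcom} gives linear decrease of $f(x^k)-f(\bar x_F)$ toward the face minimizer $\bar x_F:=\arg\min_{x\in\Omega_F}f(x)$.

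The crux is a support-stability claim: a phase that ends in a support change cannot have driven $f$ too close to its face minimizer, quantitatively, $f(x^k)-f(\bar x_F)<\gamma$ forces $\mathrm{supp}(x^{k+1})=F$. To establish it I would combine strong convexity, which gives $\|x^k-\bar x_F\|^2\le \tfrac{2}{\ell}(f(x^k)-f(\bar x_F))$, with the perturbation estimates of Proposition \ref{pro-ihtlem3.1}. Writing $\Gamma_i(x):=[s_\tau(x)]_i^2-[d_\tau(x)]_i^2$, those estimates yield $|\Gamma_i(x^k)-\Gamma_i(\bar x_F)|\le 8\|x^k-\bar x_F\|^2+4\rho_i\|x^k-\bar x_F\|\le 8\|x^k-\bar x_F\|^2+4\beta\|x^k-\bar x_F\|$, so once $\|x^k-\bar x_F\|<(\sqrt{2\alpha+\beta^2}-\beta)/4$ the perturbation stays strictly below $\alpha=\min_i|\Gamma_i(\bar x_F)-2\tau\lambda|>0$; the element-wise test in \eqref{L0-hard} then reproduces exactly the pattern of $\bar x_F$, while a parallel continuity argument for $\|z_{G_i}\|$ against $\sqrt{2\tau(\lambda\|z_{G_i}\|_0+\mu)}$ freezes the group pattern. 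Pushing the distance threshold back through strong convexity produces precisely $\gamma=\ell(\sqrt{2\alpha+\beta^2}-\beta)^2/32$ of \eqref{eq-gamma}. I expect this to be the main obstacle, both because it is where the constants $\alpha,\beta,\gamma$ are forged (here the hypothesis $\alpha>0$ is essential) and because the element-wise and group-wise thresholds must be controlled simultaneously, unlike the purely $\ell_0$ setting of \cite{lu2014iterative}.

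With $\gamma$ in hand, each support-changing phase $j$ must terminate before $f-f(\bar x_{F_j})$ drops below $\gamma$, so by Proposition \ref{pro-fcom} its length is at most $\tfrac{2}{\tau\ell}\log(g_j/\gamma)+O(1)$, where $g_j$ is the initial gap of that phase; the single final phase carries the support $\mathcal S=\mathrm{supp}(x^*)$ (Theorem \ref{the-globalconvergence}) with $\bar x_{\mathcal S}=x^*$, and needs at most $\tfrac{2}{\tau\ell}\log(g_{C+1}/\epsilon)+O(1)$ iterations to reach $f(x_\epsilon)-f(x^*)\le\epsilon$, equivalently $\phi(x_\epsilon)\le\phi^*+\epsilon$. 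By Theorem \ref{the-supp-complexity} the number of support changes obeys $C\le C_{\max}:=2(\phi(x^0)-\phi^*)/(\delta^2(1/\tau-L))$, and \eqref{eq-Fdecreasing-2} shows every support change lowers $\phi$ by at least $bG:=\delta^2(1/\tau-L)/2$ with $G=\phi(x^0)-\underline{\phi}^*$ and $b$ as in \eqref{eq-gamma}; hence $g_j\le G\big(1-(j-1)b\big)$.

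Finally I would sum the per-phase bounds. Using $\log g_j\le \log G-(j-1)b$ converts $\sum_{j=1}^{C}\log(g_j/\gamma)$ into the quadratic $C\log(G/\gamma)-\tfrac{b}{2}C(C-1)$ in the number of support changes, and collecting the residual additive constants from the $O(1)$ terms into $d$ gives a total of the form $\tfrac{2}{\tau\ell}\big[\tfrac12\big((d-2b)C-bC^2\big)+\mathrm{const}-\log\epsilon\big]$. Since $C$ is only known to lie in $[0,C_{\max}]$, I maximize the concave quadratic $(d-2b)t-bt^2$ over that interval to obtain $\omega$ as in \eqref{eq-omega}; exponentiating the constant part yields $\theta=2^{(\omega+3)/2}(\phi(x^0)-\phi^*)$ and the claimed bound $\tfrac{2}{\tau\ell}\log(\theta/\epsilon)$. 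What remains after the support-stability lemma is purely the bookkeeping of these additive constants and the base conversions absorbed into $d$ and the factor $2^{(\omega+3)/2}$.
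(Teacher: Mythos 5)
Your proposal follows essentially the same route as the paper's own proof: the same phase decomposition into constant-support segments on which BGIHT reduces to projected gradient over a face of $\Omega$, the same per-phase length bound forged from strong convexity, Proposition \ref{pro-ihtlem3.1} and the margin $\alpha$ to obtain $\gamma=\ell\left(\sqrt{2\alpha+\beta^2}-\beta\right)^2/32$ (the paper runs this step as a contradiction, you state it as a support-stability claim --- logically equivalent), the same count of support changes via Theorem \ref{the-supp-complexity} and the decrease $\delta^2(1/\tau-L)/2$ per change, and the same summation via $\log(1-t)\le -t$ leading to the concave quadratic in the number of changes, hence to $\omega$, $\theta$ and the bound $\frac{2}{\tau\ell}\log(\theta/\epsilon)$. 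If anything, you are slightly more scrupulous than the paper on one point: you flag that the group-wise threshold test against $\sqrt{2\tau(\lambda\|z_{G_i}\|_0+\mu)}$ must also be shown stable, whereas the paper's argument verifies only the element-wise test \eqref{eq-proximall0-2} when concluding $\mathcal{S}_{k_j}=\mathcal{S}_{k_j+1}=\mathcal{S}^*$.
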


\section{Numerical Results}
In this section, we report the numerical results of our proposed PIHT-SGB in this paper. To demonstrate the advantages of our algorithm, we conducted a comparative analysis with PIHT \cite{lu2014iterative} (proximal iterative hard thresholding methods) , DCGL (Difference of-Convex algorithm for Sparse Group $\ell_0$ problem) \cite{doi:10.1137/21M1443455}. The algorithms are implemented in MATLAB R2020a on a personal laptop with AMD Ryzen 7 5800H with Radeon Graphics 3.20 GHz CPU and 16GB memory.

For DCGL, we use the same parameter setting as it in \cite{doi:10.1137/21M1443455}.
For PIHT-SGB, we set the stopping conditions as follows
\begin{align}\label{eq-stop}
\frac{\|x^k - x^{k-1}\|}{\max(1,\|x^k\|) } \le 10^{-6},\ f(x^k) \le \epsilon,\ {\rm{iter}} > 100.
\end{align}
Unless otherwise specified, the default initial point is set to $x^0=(0,\, \cdots,\, 0)^{\top}$. The testing problems {take the form as \eqref{GBL0} with $ f(x) = \frac{1}{2}\|Ax - b\|^2$,} where $b\in \mathbb{R}^m$ is an observation vector and $l$, $u\in \mathbb{R}_{+}^n$ are boundary vectors. The choices of $\epsilon$ and $l, u$ are described in each respective example.

We will report the following results: dimension $n$, number of iterations Iter, computation time (in seconds), and err defined by $err$ = $\|x^k - x^*\|/\|x^*\|$, where $x^*$ is the ground truth solution.

\subsection{Signal recovery}
In this experiment, we verify the efficiency of PIHT-SGB in noisy signal recovery.

\textbf{E1}. \cite{liao2024subspace} We consider the exact recovery $b = Ax^* + \sigma \xi$. $A \in \mathbb{R}^{m\times n}$ are random Gaussian matrix and the columns of $A$ are normalized to have $\ell_2$ norm of 1. We set the number of non-zero components $s = 0.05n$. The noise $\xi$ {is coded as} $\xi = {\rm{randn}}(n,\, 1)$ and $\sigma$ is set to 0.01. The vector $x^*$ is divided into $n_{G} = \frac{n}{w}$ groups with each group containing $w=4$ elements, the group sparsity (number of nonzero groups) is set to $s_{G} = \frac{s}{w}$. The nonzero elements $x^*$ follows a normal distribution with values between 0.1 and 5, which by the following codes
\begin{align*}
x^* = {\rm{zeros}}(n,\, 1),\  x^*_{M} = 0.1 + (5-0.1){\rm{rand}}(s,\, 1),\
\end{align*}
where $M :={\rm{supp}}(x^*)$ which is randomly selected component positions of $s_{G}$ groups. In this case, we set the boundary vector $l=u = 5\times {\rm{ones}}(n,\, 1)$.
For \textbf{E1}, we set $\epsilon = 10^{-20}$ in \eqref{eq-stop}.

\subsubsection{Different dimensions}
In order to display the result of our algorithm intuitively, Fig. \ref{fig1} illustrates the
visualization of the signal recovery effect for different column numbers from $n = 2000$ to $n = 12000$ and row numbers with $m =0.25n$. From Figure \ref{fig1}, we see that the output signals of Algorithm \ref{alg-PIHT-SGB} almost coincide with the ground truth signals. The iterative process
is provided in Fig. \ref{fig2}, when $n=10000$ and $m=0.25n$. One
could see that the function value and $err$ all exhibit a linear descent.

\begin{figure}[htbp]
	\centering
	\includegraphics[width=6cm]{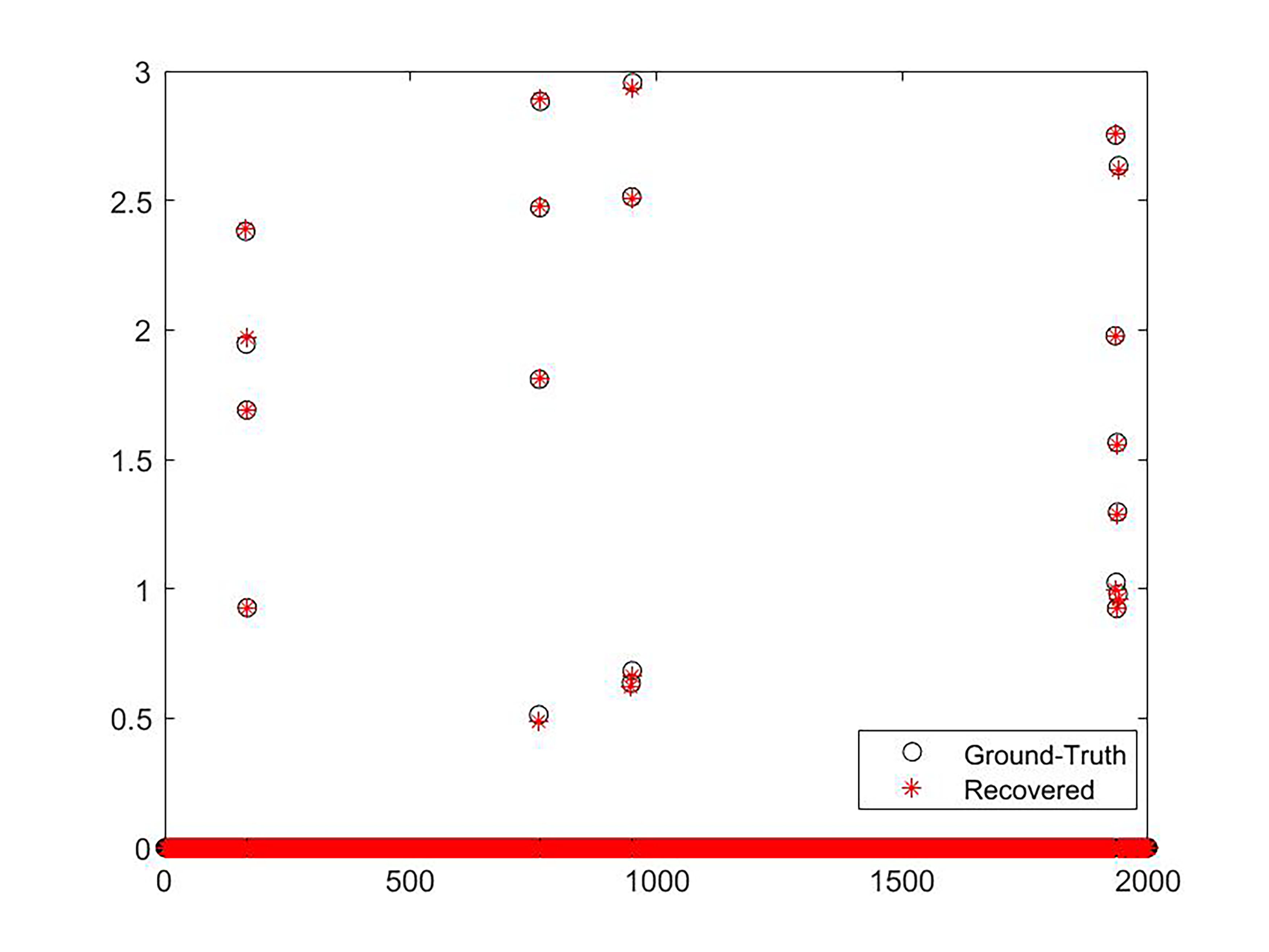
}
    \includegraphics[width=6cm]{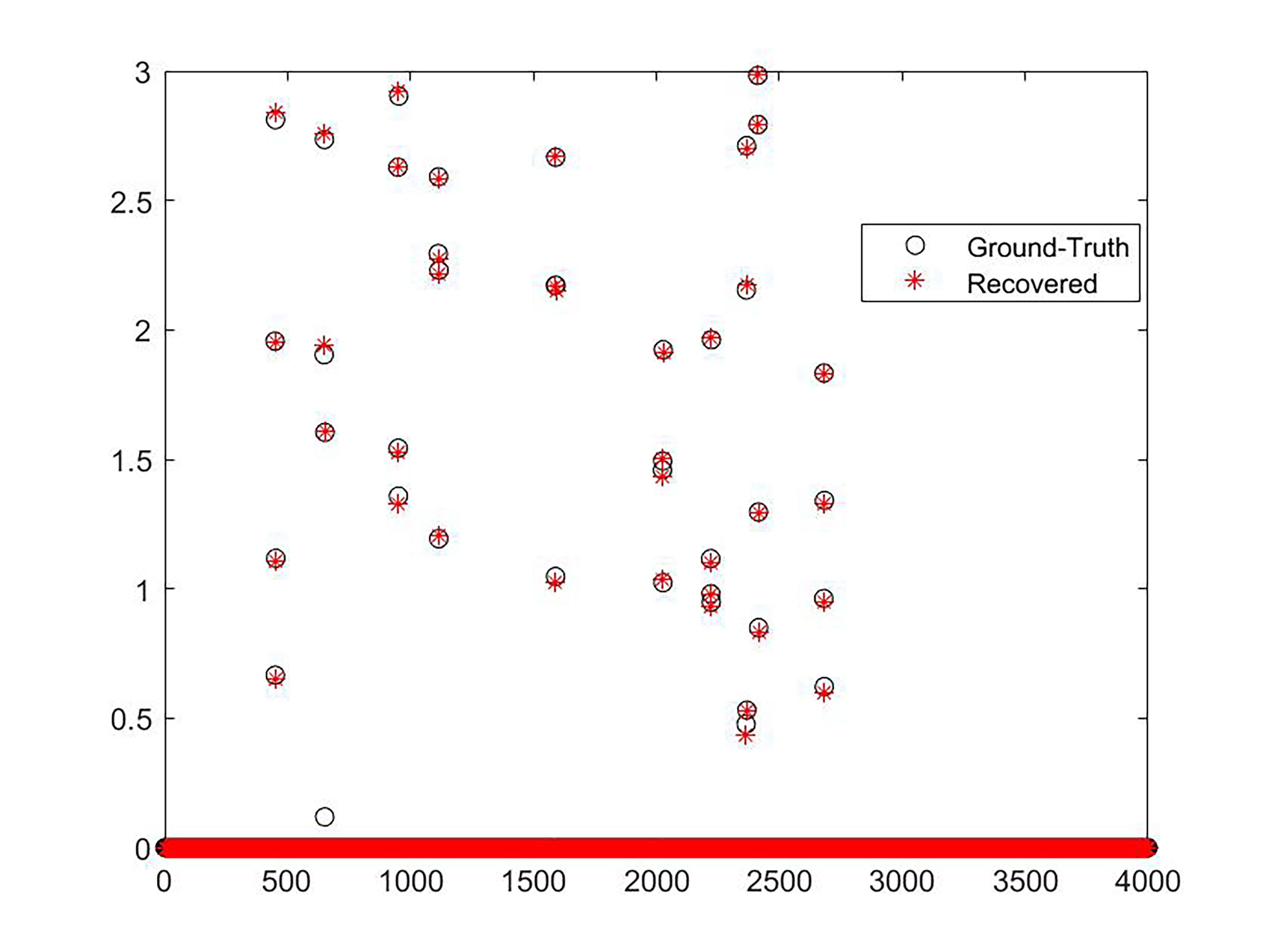
}
	\includegraphics[width=6cm]{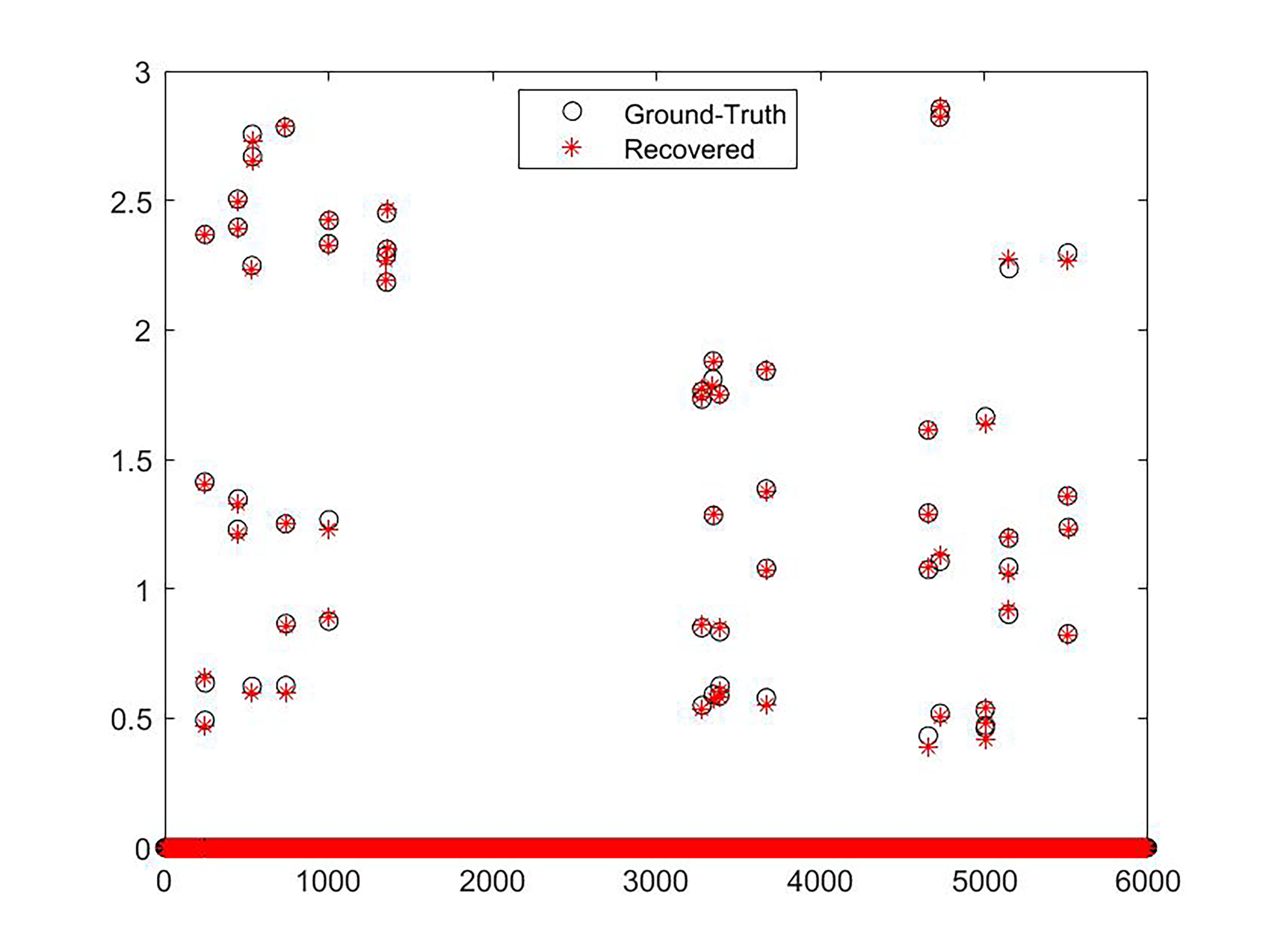
}
    \includegraphics[width=6cm]{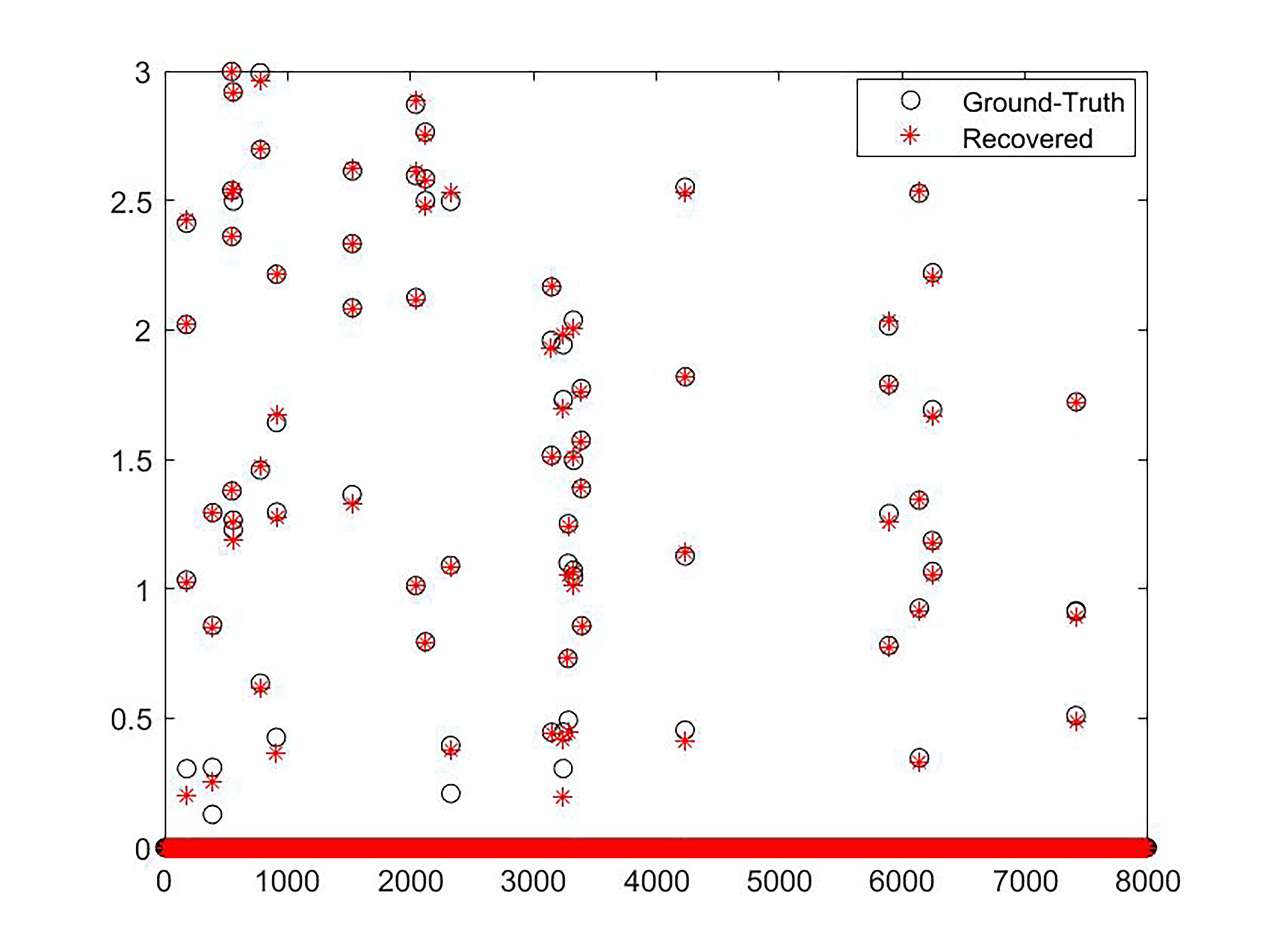
}
	\includegraphics[width=6cm]{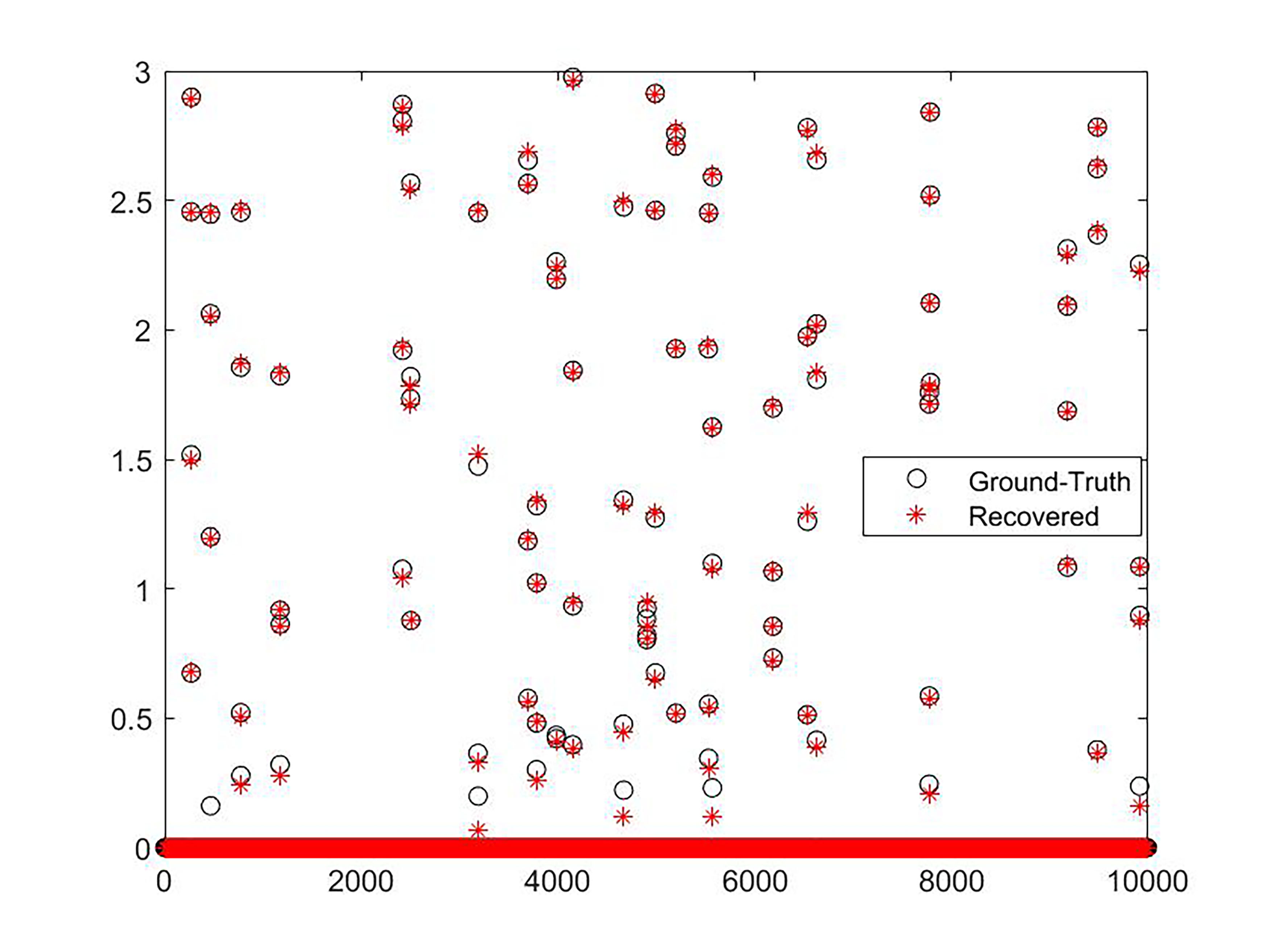
}
    \includegraphics[width=6cm]{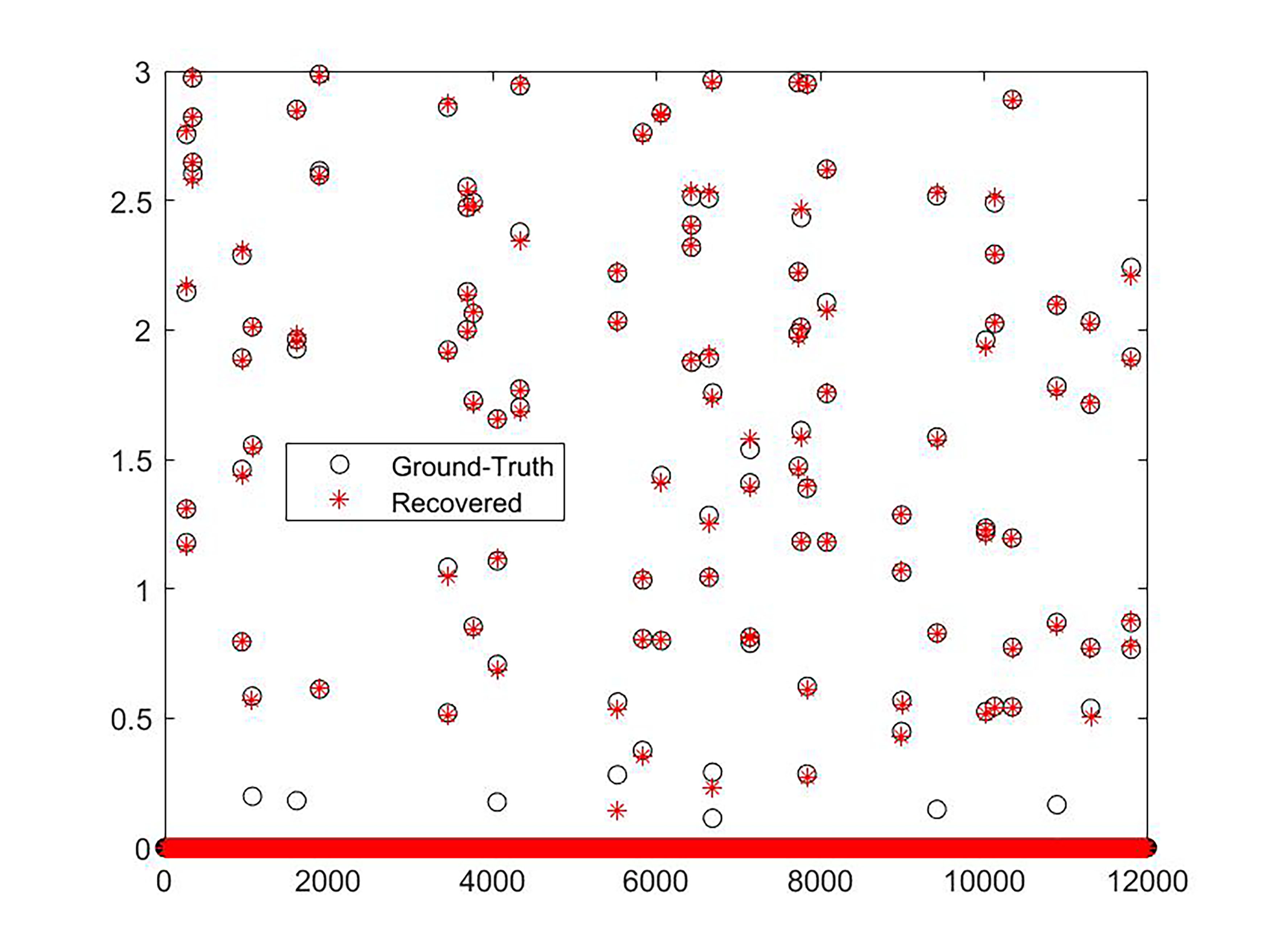
}
	\caption{An illustration of signal restoration by PIHT-SGB with $n=2000 : 12000$ and $m=0.25n$}
	\label{fig1}
\end{figure}

\begin{figure}[htbp]
	\centering
	\includegraphics[width=6cm]{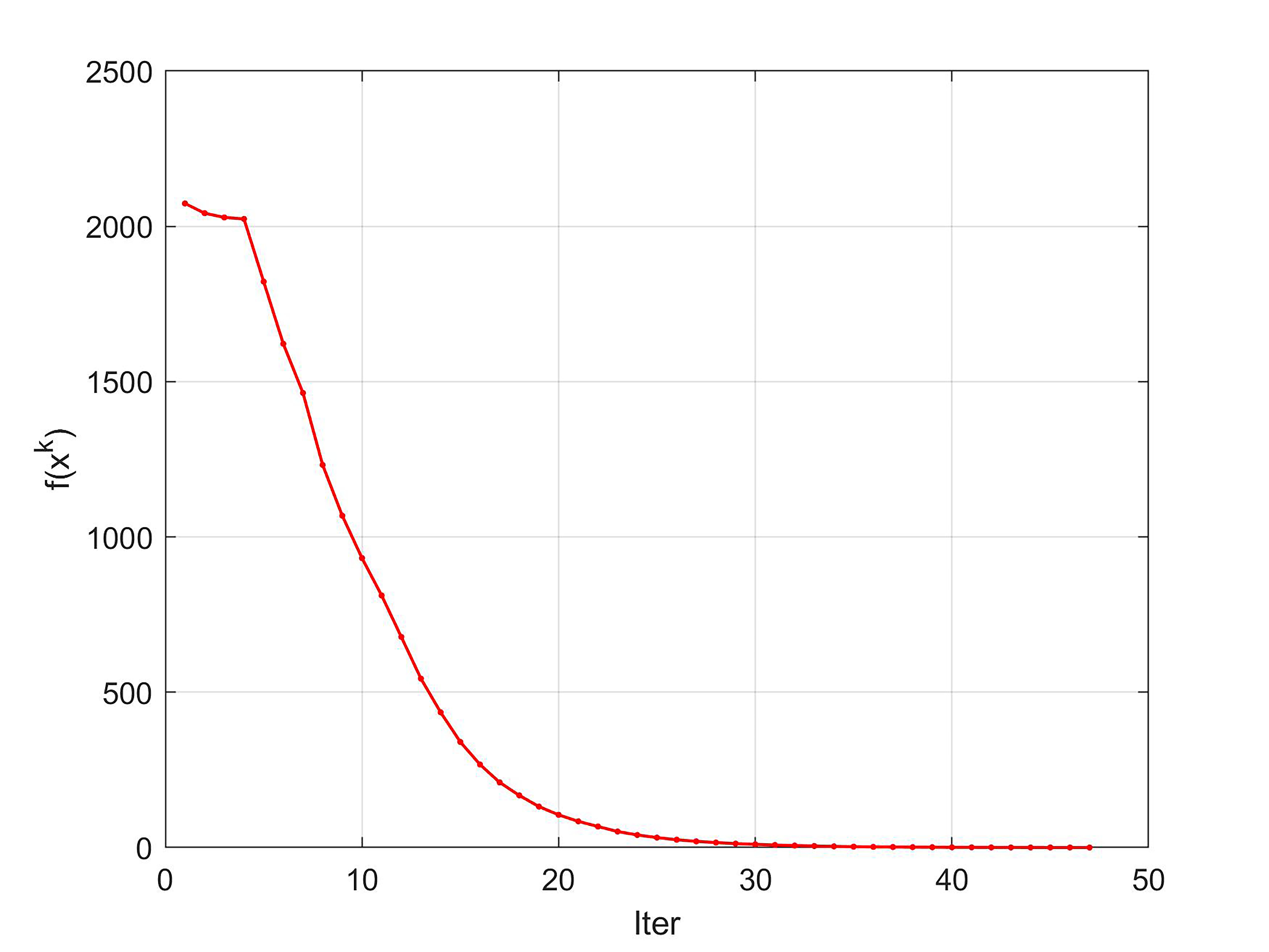
}
\includegraphics[width=6cm]{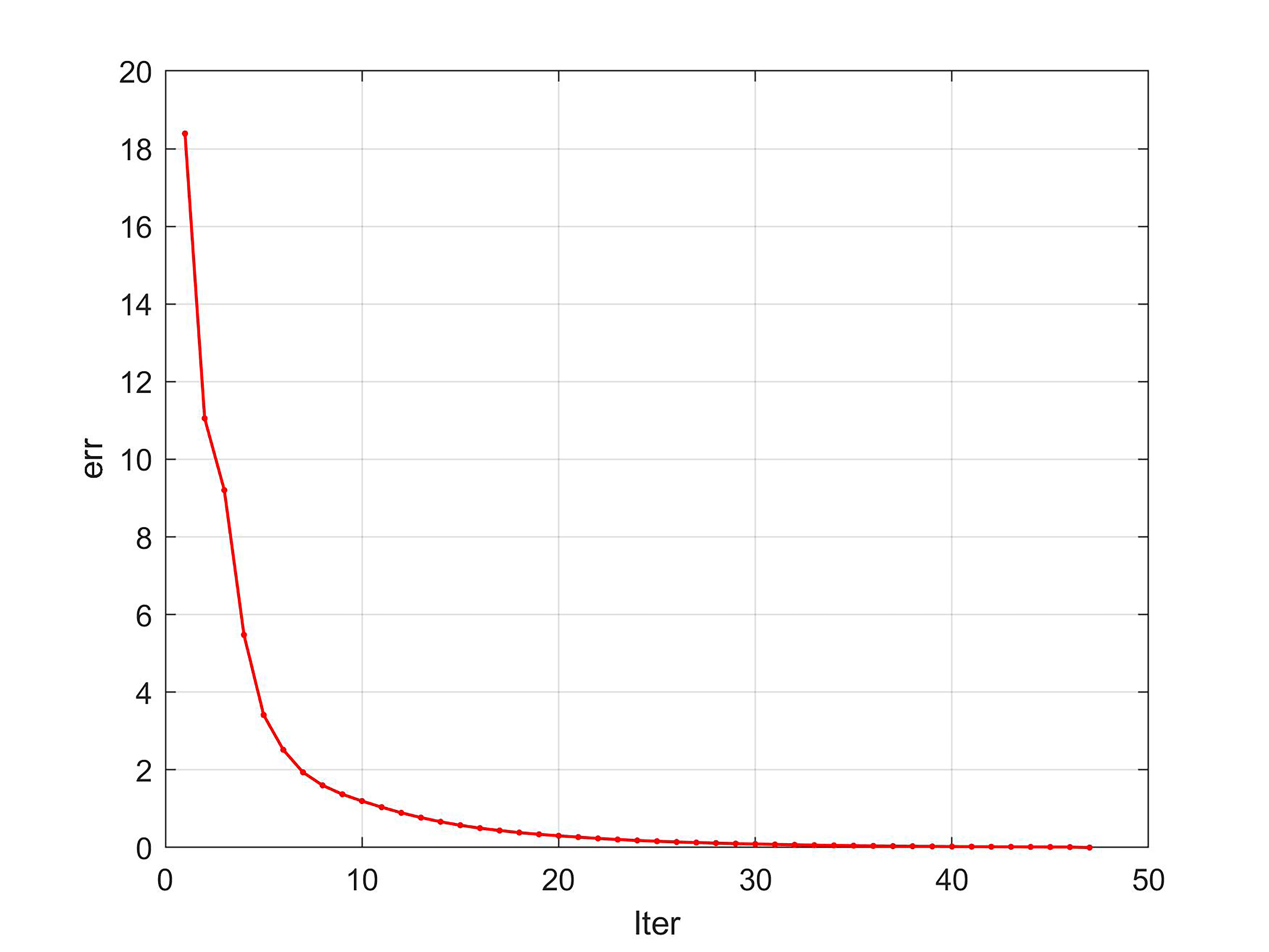
}
	\caption{An illustration of iteration process in PIHT-SGB with $n=10000$ and $m=0.25n$.}
	\label{fig2}
\end{figure}

\begin{table*}[htbp]
\scriptsize
\centering
\caption{\text{Results on noised signal recovery with $m = 0.25n$.}}\label{Tab01}
%\begin{tabular}{ccccccc}
\begin{tabular*}{\textwidth}{@{\extracolsep\fill}ccccrcccr}%ccccccccc
\toprule
 %\multicolumn{7}{c}{\text{ExamplE1}}  \\
\textbf{Algorithm}   &{n} &  iter     &  time (s)   &   err
&{n}  &  iter      &  time (s)   &   err \\
 \midrule
\text{PIHT-SGB} & 5000  & 43  & 0.24  & 1.90e-02 & 7000  & 44  & 0.44  & 1.53e-02 \\
 \midrule
\text{PIHT-SGB} & 9000  & 45  & 0.72  & 1.42e-02 & 11000  & 46  & 1.08  & 1.31e-02 \\
 \midrule
\text{PIHT-SGB} & 13000  & 46  & 1.46  & 1.21e-02 & 15000  & 47  & 1.94  & 1.17e-02 \\
\bottomrule
\end{tabular*}
\end{table*}

\subsubsection{Different initializations}
We choose different initial points $x^0 = \bf{-1},\, \bf{1},\, \bf{0},\, \bf{randn}$ which are {coded} as
\begin{align*}
{\bf{-1}} = -1\times {\rm{ones}}(n,\, 1),\ {\bf{1}} = {\rm{ones}}(n,\, 1),\ {\bf{0}} = {\rm{zeros}}(n,\, 1),\ {\bf{randn}} = {\rm{ randn}}(n,\, 1).
\end{align*}
In this test, we set $\sigma = 0.01$, $m = 0.25n$, $s=0.05n$.
\begin{table*}[htbp]
\scriptsize
\centering
\caption{\text{Results on noised signal recovery with different initial points. }}\label{Tab02}
%\begin{tabular}{ccccccc}
\begin{tabular*}{\textwidth}{@{\extracolsep\fill}ccccrcccr}%ccccccccc
\toprule
 %\multicolumn{7}{c}{\text{ExamplE1}}  \\
\textbf{Initial point}   &{n} &  iter     &  time (s)   &   err
&{n}  &  iter      &  time (s)   &   err \\
 \midrule
\textbf{0} & \multirow{4}{*}{5000}  & 43  & 0.26  & 1.76e-2 & \multirow{4}{*}{7000}  & 44  & 0.48  & 1.61e-2 \\
\textbf{-1} & \multirow{4} {*}{}  & 46  & 0.28  & 3.43e-2 & \multirow{4} {*}{}  & 47  & 0.53  & 2.35e-2 \\
\textbf{+1} & \multirow{4}{*}{}  & 43  & 0.25  & 1.74e-2 & \multirow{4} {*}{}  & 44  & 0.49  & 1.62e-2 \\
\textbf{randn} & \multirow{4}{*}{}  & 45  & 0.27  & 2.56e-2 & \multirow{4} {*}{}  & 46  & 0.51  & 1.95e-2 \\
 \midrule
\textbf{0} & \multirow{4} {*}{9000}  & 45  & 0.75  & 1.40e-2 & \multirow{4} {*}{11000}  & 46  & 1.15  & 1.34e-2 \\
\textbf{-1} & \multirow{4} {*}{}  & 48  & 0.85  & 2.28e-2 & \multirow{4} {*}{}  & 50  & 1.32  & 2.48e-2 \\
\textbf{+1} & \multirow{4} {*}{}  & 45  & 0.78  & 1.41e-2 & \multirow{4} {*}{}  & 46  & 1.16  & 1.32e-2 \\
\textbf{randn} & \multirow{4} {*}{}  & 47  & 0.83  & 1.68e-2 & \multirow{4} {*}{}  & 48  & 1.25  & 1.70e-2 \\
 \midrule
\textbf{0} & \multirow{4} {*}{13000}  & 46  & 1.48  & 1.26e-2 & \multirow{4} {*}{15000}  & 47  & 1.97  & 1.22e-2 \\
\textbf{-1} & \multirow{4} {*}{}  & 50  & 1.68  & 1.94e-2 & \multirow{4} {*}{}  & 50  & 2.23  & 1.80e-2 \\
\textbf{+1} & \multirow{4} {*}{}  & 46  & 1.52  & 1.24e-2 & \multirow{4} {*}{}  & 47  & 2.01  & 1.22e-2 \\
\textbf{randn} & \multirow{4} {*}{}  & 48  & 1.63  & 1.54e-2 & \multirow{4} {*}{}  & 49  & 2.13  & 1.48e-2 \\
\bottomrule
\end{tabular*}
\end{table*}

\begin{figure}[htbp]
	\centering
	\includegraphics[width=6cm]{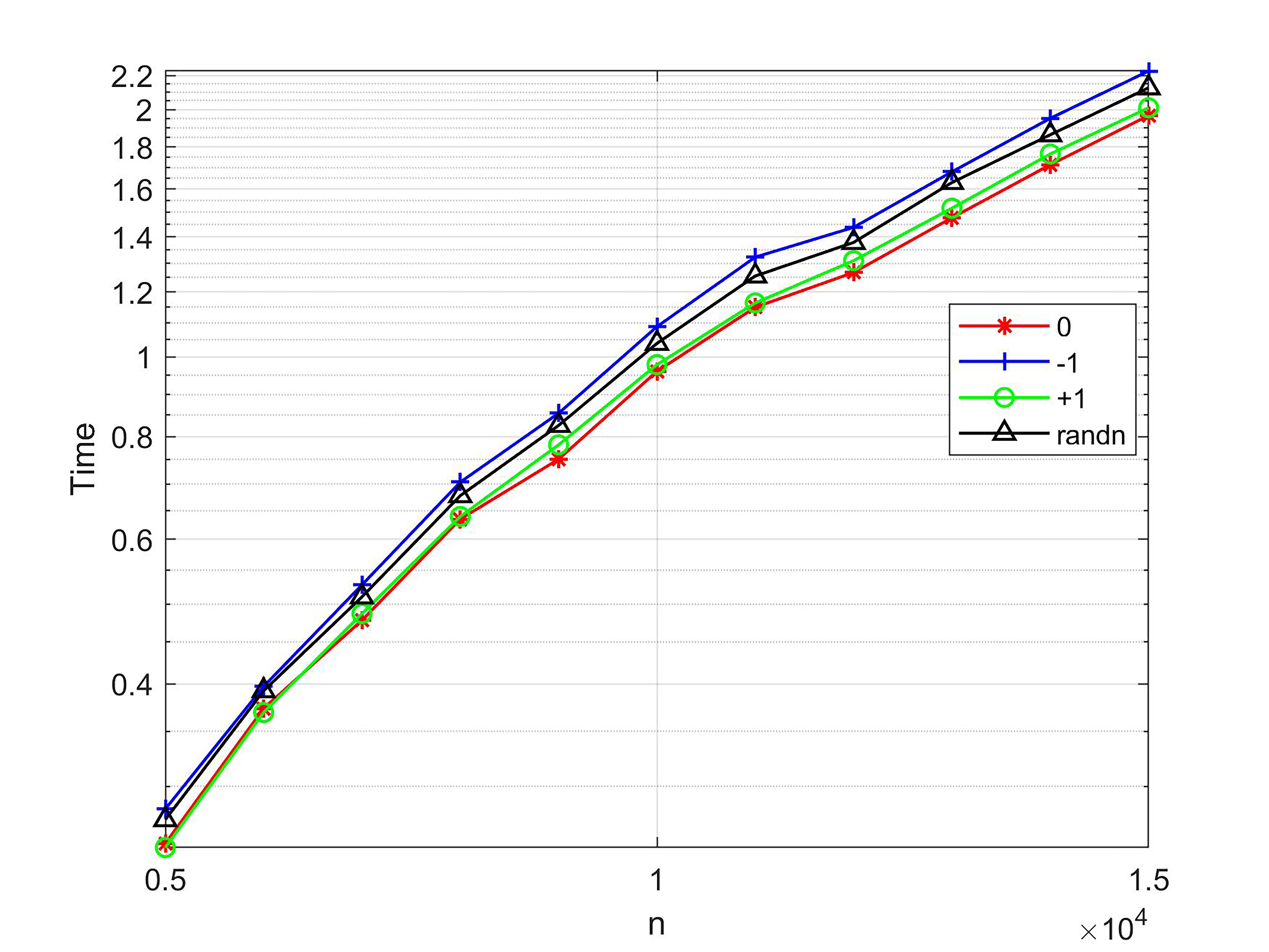
}
\includegraphics[width=6cm]{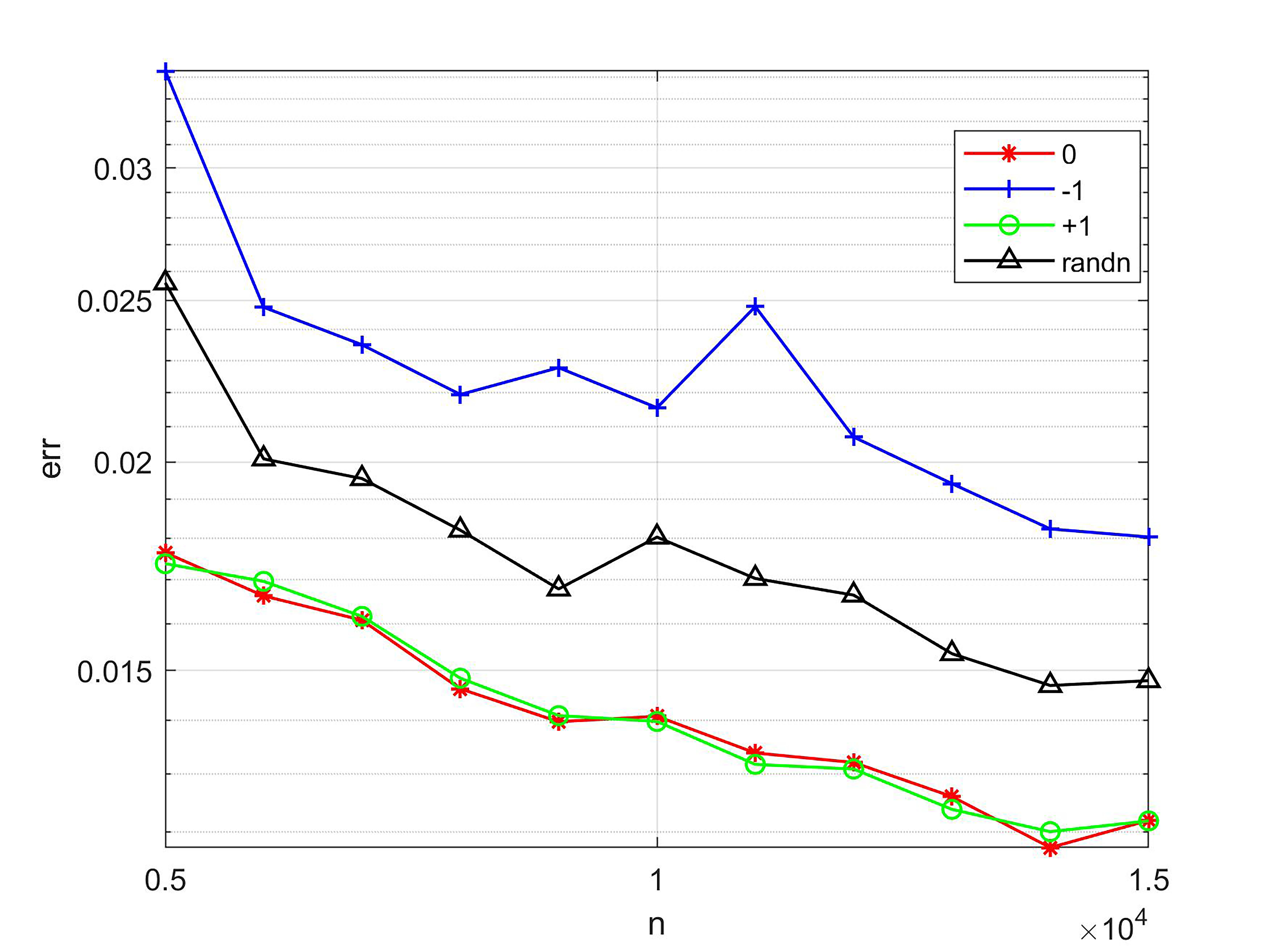
}
	\caption{Signal recovery by PIHT-SGB with different initial points and $m=0.25n$, $s = 0.05n$.}
	\label{fig4}
\end{figure}

From Table \ref{Tab02} and Figure \ref{fig4}, it can be seen that the residuals of the solutions obtained by PIHT-SGB using different initial points are all less than \( 5 \times 10^{-2} \), indicating that the PIHT-SGB algorithm exhibits a certain degree of robustness with respect to the choice of initial points.

\subsubsection{Different boxes}
{This experiment is designed to carry out a comparison: examining the impact of different boxes on PIHT-SGB. These comparisons are assessed by employing the computational time and reconstruction accuracy under varying sparsity levels as the performance metrics. We set different sparsity levels from $0.01n$ to $0.17n$, $w=4$ (once $w$ and $s$ are
given, $n_G$ and $s_G$ is determined), $n=8000$, $m=0.5n$, $\sigma=0.001$. Other parameter settings remain
unchanged. Specifically, we set three boundary vector $u_1 = l_1 = 5\times {\rm{ones}}(n,1)$ for PIHT-SGB (box1), $u_2 = l_2 = 6\times {\rm{ones}}(n,1)$ for PIHT-SGB (box2) and $u_3 = l_3 = 10\times {\rm{ones}}(n,1)$ for PIHT-SGB (box3).  A
trial is regarded as success if $err \le 0.05$.}

\begin{figure}[htbp]
	\centering
	\includegraphics[width=6cm]{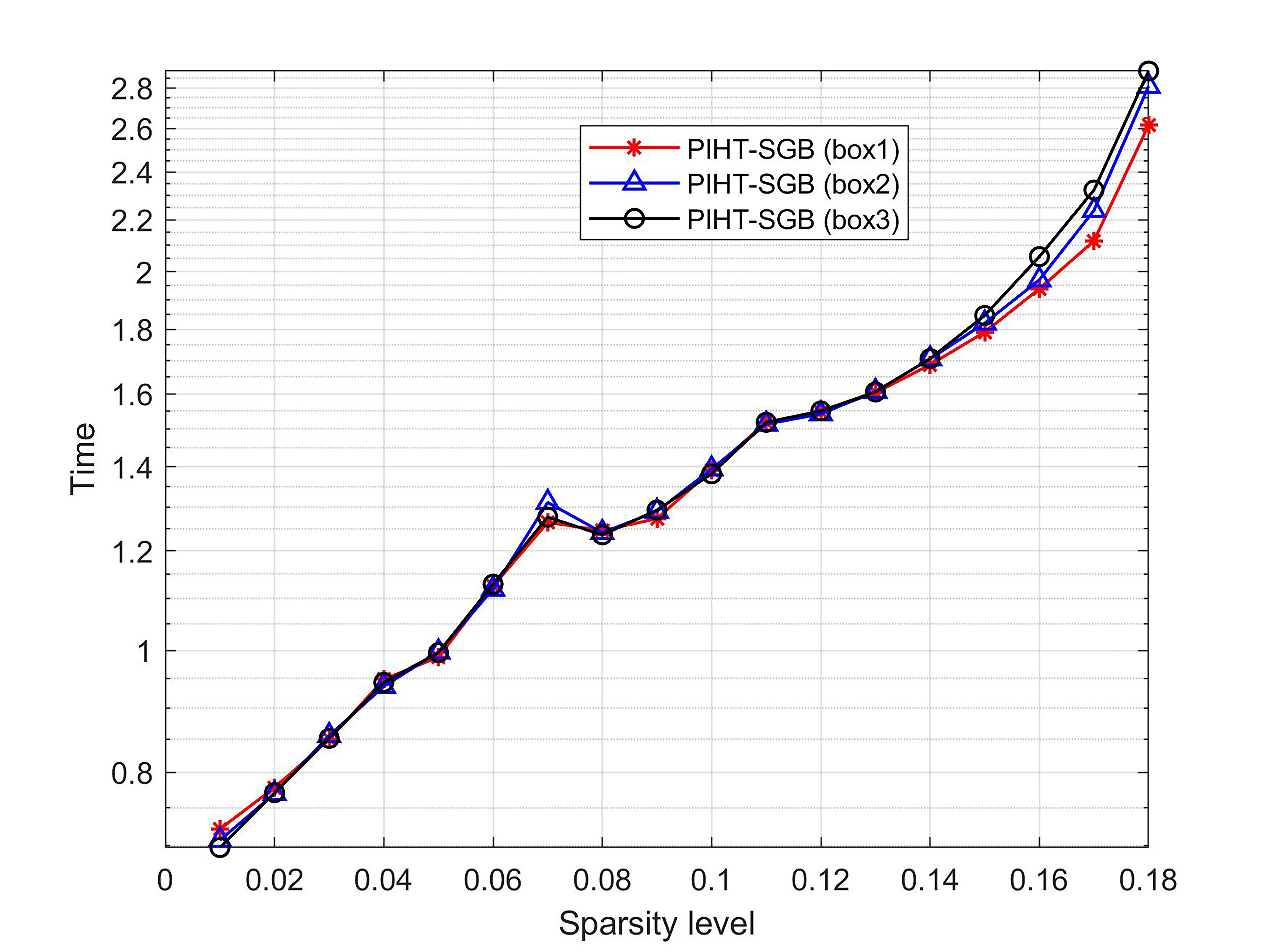
}
\includegraphics[width=6cm]{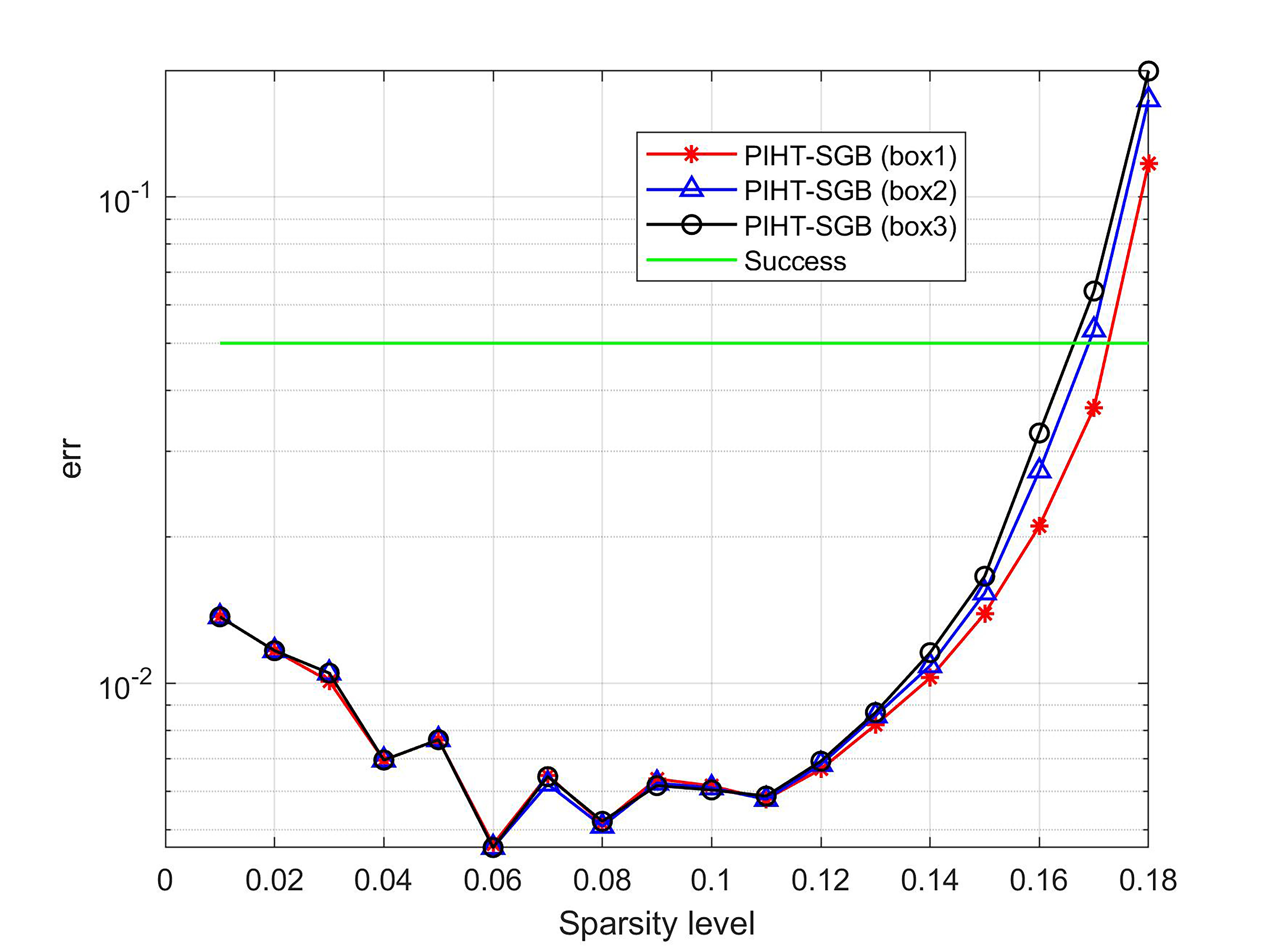
}
	\caption{PIHT-SGB in different boxes.}
	\label{fig5-2}
\end{figure}

{Figure \ref{fig5-2} reveals that the critical role of the box constraint in PIHT-SGB. The results show that constraints closer to the ground truth $x$ (representing more accurate prior knowledge) lead to faster convergence and higher solution accuracy.}
\subsection{Comparison with PIHT}
In this part, we will demonstrate the impact of different group sizes on PIHT and PIHT-SGB under high sparsity conditions, and then evaluate the performance of PIHT and PIHT-SGB with different box constraints across varying sparsity levels.
\subsubsection{Different group sizes }
{In this experiment, based on \textbf{E1} with fixed $n=8000$ and $s=0.14n$, we set different $w \in \{2, 4, 5, 8, 10, 16, 20, 32, 64, 80, 160\}$. As $w$ increases, the number of group decreases. The performance of both PIHT and PIHT-SGB will be evaluated under these varying $w$ settings.}
\begin{figure}[htbp]
	\centering
	\includegraphics[width=6cm]{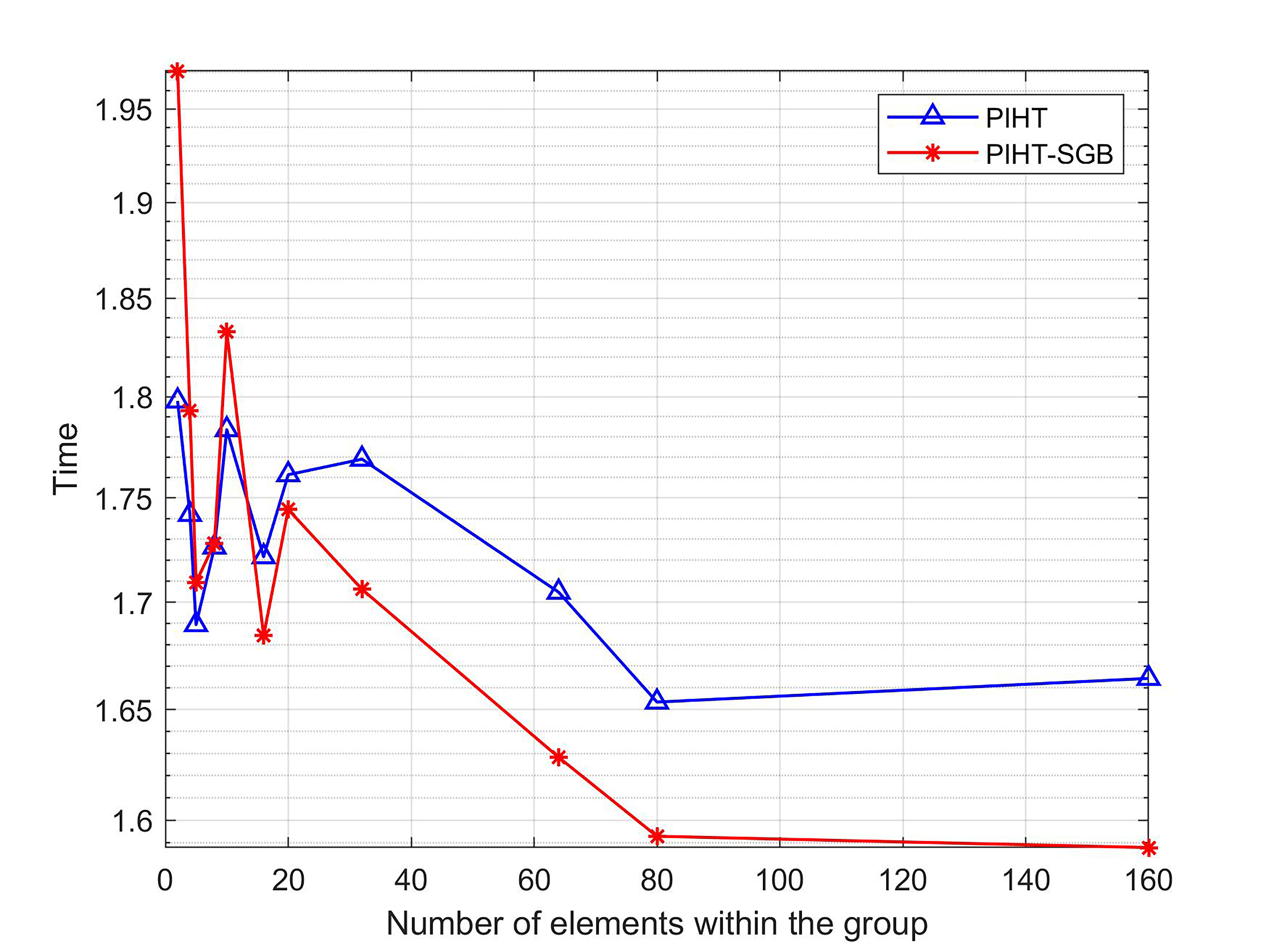
}
\includegraphics[width=6cm]{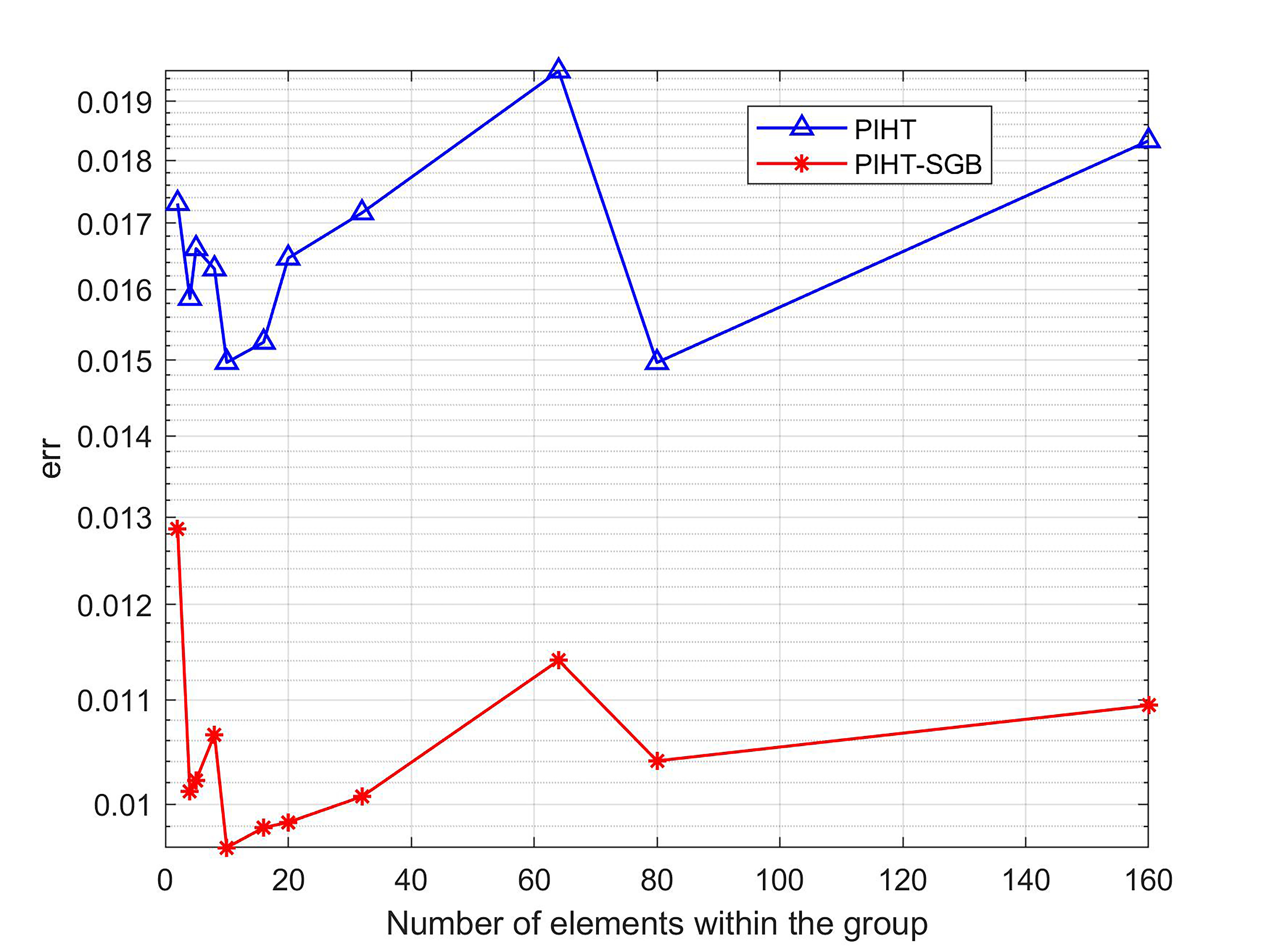
}
	\caption{Comparison with PIHT under different group sizes.}
	\label{fig5-1}
\end{figure}

{As observed from Figure \ref{fig5-1}, when $w$ is relatively small, the computational time of PIHT and PIHT-SGB shows little difference. However, as w increases, PIHT-SGB requires less computational time than PIHT, while consistently achieving a lower recovery error. These results demonstrate that simultaneously considering both group sparsity and element-wise sparsity leads to better reconstruction performance compared to considering element-wise sparsity alone. Furthermore, the number of groups can influence the computational efficiency of the algorithms to some extent.}
\subsubsection{Different sparsity levels }
{In this experiment, based on \textbf{E1} with fixed $n=8000$, $m=0.5n$ and $\sigma = 0.001$, we set different sparsity levels from $0.01n$ to $0.17n$. A
trial is regarded as success if $err \le 0.05$.}
\begin{figure}[htbp]
	\centering
	\includegraphics[width=6cm]{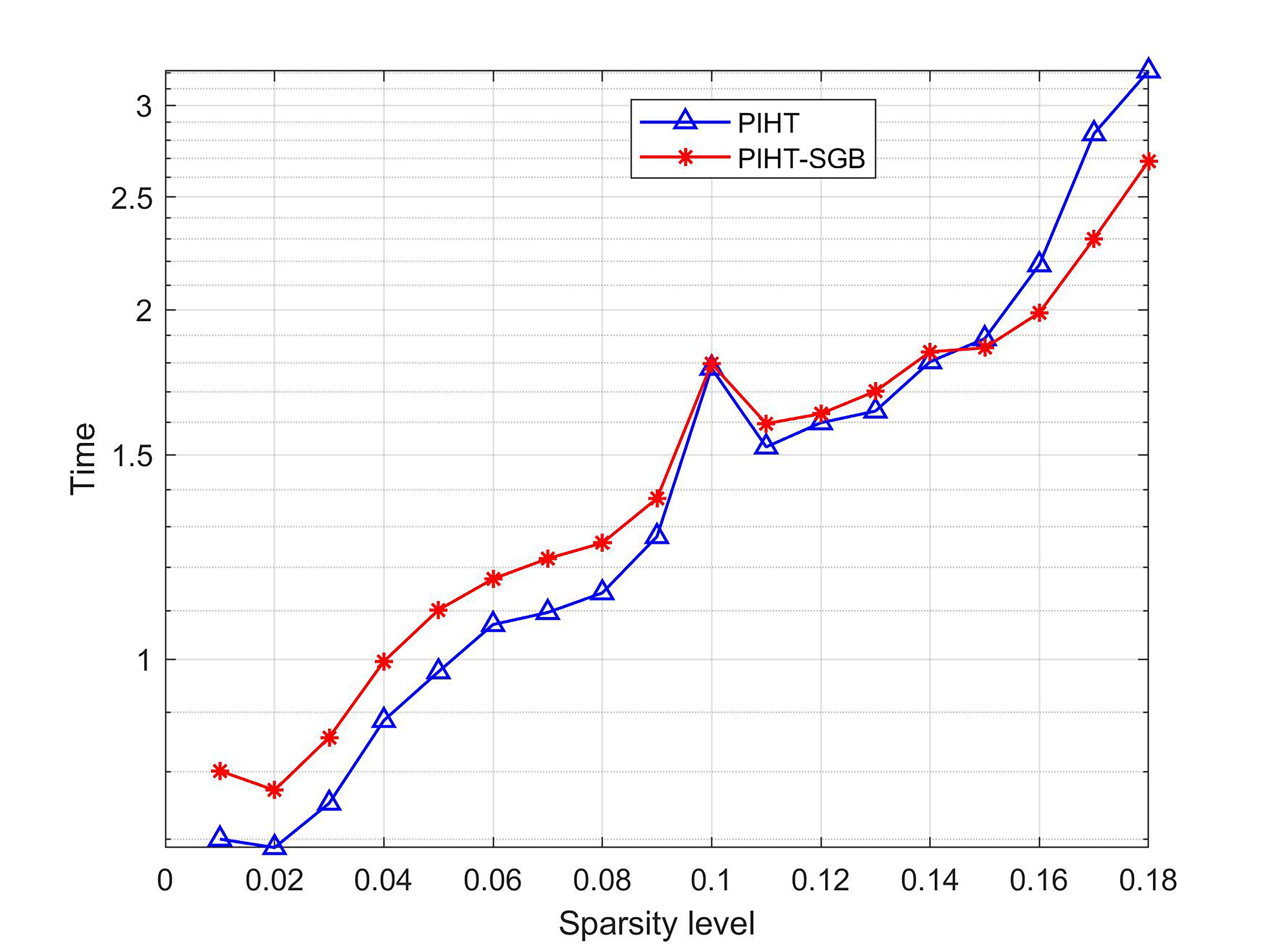}
\includegraphics[width=6cm]{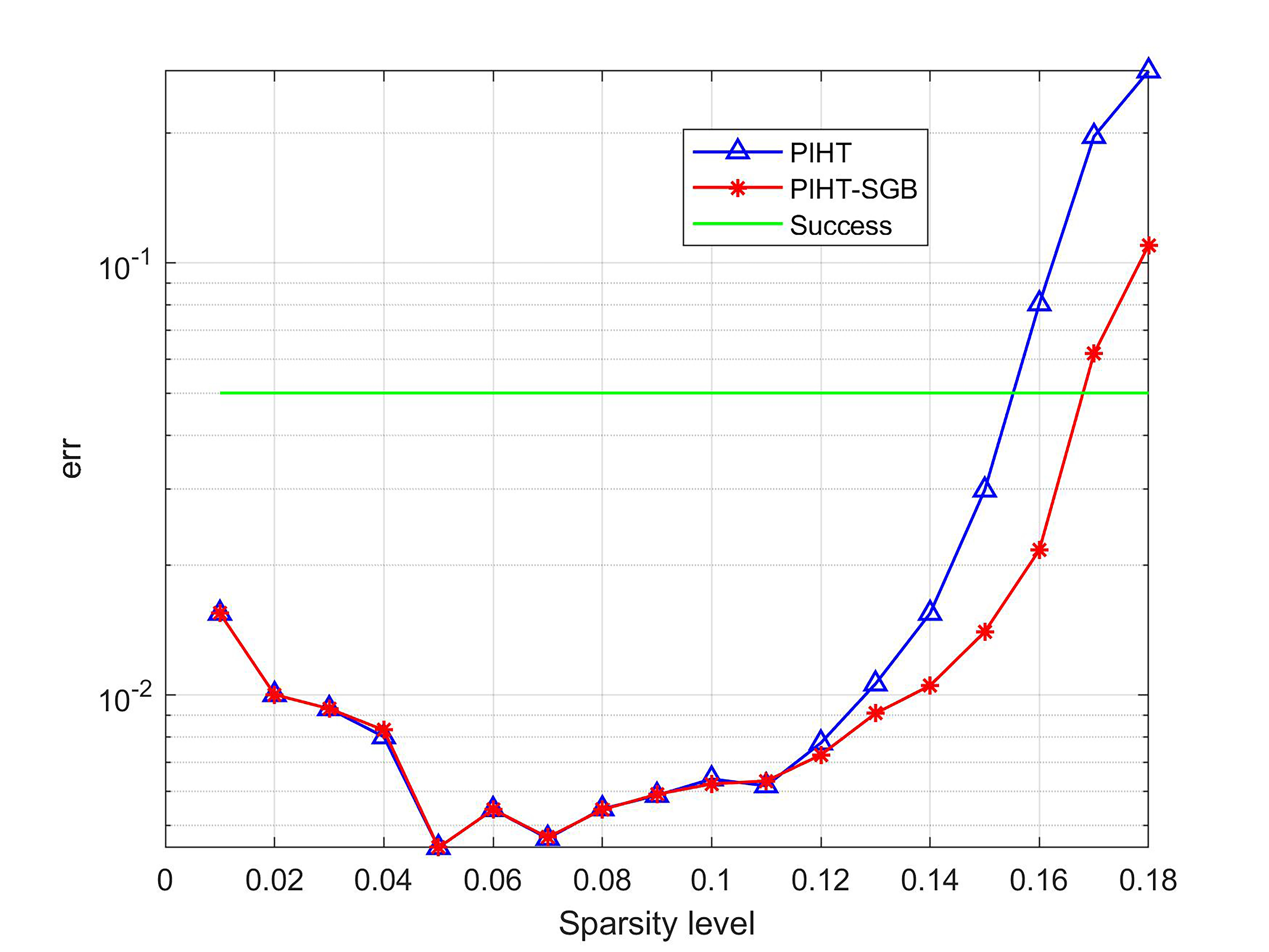}
	\caption{Comparison with PIHT under different sparsity levels.}
	\label{fig5-3}
\end{figure}
{As illustrated in Figure \ref{fig5-3}, with increasing $s$, PIHT-SGB achieve superior reconstruction accuracy compared to PIHT. These results demonstrate that jointly exploiting element-wise sparsity and group sparsity yields enhanced reconstruction performance over approaches that solely rely on element-wise sparsity.}
\subsection{Image reconstruction}
In this part, we will test PIHT-SGB in real application. We consider the multichannel image reconstruction problem \cite{7747459}. We compare PIHT-SGB with DCGL \cite{doi:10.1137/21M1443455}.

\textbf{E2} We consider the exact recovery $b = Ax^* + \sigma \xi$. $A \in \mathbb{R}^{m\times n}$ is random Gaussian matrix and the columns of $A$ are normalized to have $\ell_2$ norm of 1. In order to facilitate comparison, we adopt the same images ($48\times 48$) and experimental setting of \cite{7747459}. Each origin image has RGB (red, green, blue) three channels. Each image is transformed into an $n$-dimensional vector $x^*$ and the pixels are grouped at the same position from three channels together. Hence, the dimension $n = 48\times 48 \times 3 = 6912$ and each group $x_{G_i} \in \mathbb{R}^3$ for any $i\in [2304]$. $A\in \mathbb{R}^{1152\times 6912}$ is an random Gaussian matrix and the columns of $A$ are normalized to have $\ell_2$ norm of 1. The noise $\xi$ is coded as $\xi = {\rm{randn}}(n,\, 1)$ and $\sigma$ is set to $(0.08,0.09,0.1,0.12,0.13,0.14)$.
{According to \cite{6307860}, imposing constraints related to x facilitates image reconstruction by leveraging prior information. So, in this case, we set the boundary vector $l=u = 10\times {\rm{ones}}(n,\, 1)$, which is the same as it in \cite{doi:10.1137/21M1443455}.}
For image tasks, PSNR (peak signal to noise ration) is commonly used to quantify reconstruction quality, which is defined by
\begin{align*}
{\rm{PSNR}} := -10\log_{10}\left( \frac{\|x^k - x^*\|_{2}^{2}}{n} \right),
\end{align*}
it is obvious that the larger the PSNR value is, the better the image is reconstructed. For \textbf{E2}, we set $\epsilon = A^{-1}b$ in \eqref{eq-stop}.

\begin{figure}[htbp]
	\centering
	\includegraphics[width=10cm]{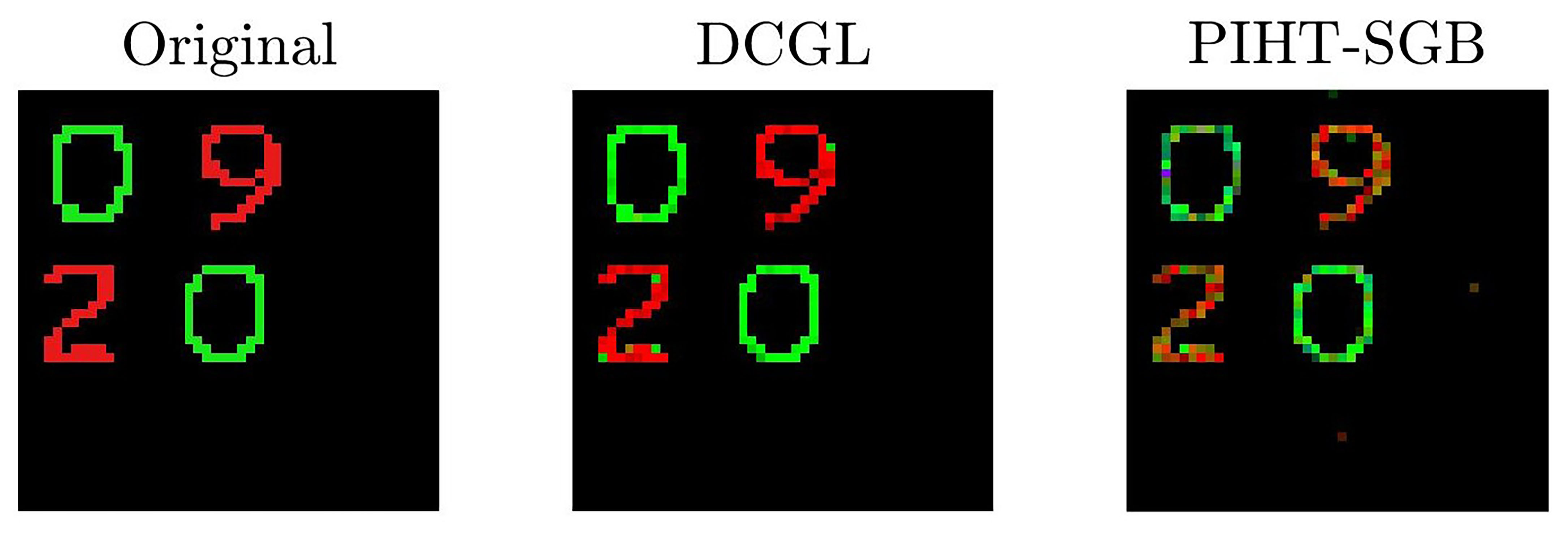
}
	\caption{Image recovery by DCGL and PIHT-SGB with $\sigma=0.08$.}
	\label{fig7}
\end{figure}
\begin{figure}[htbp]
	\centering
	\includegraphics[width=10cm]{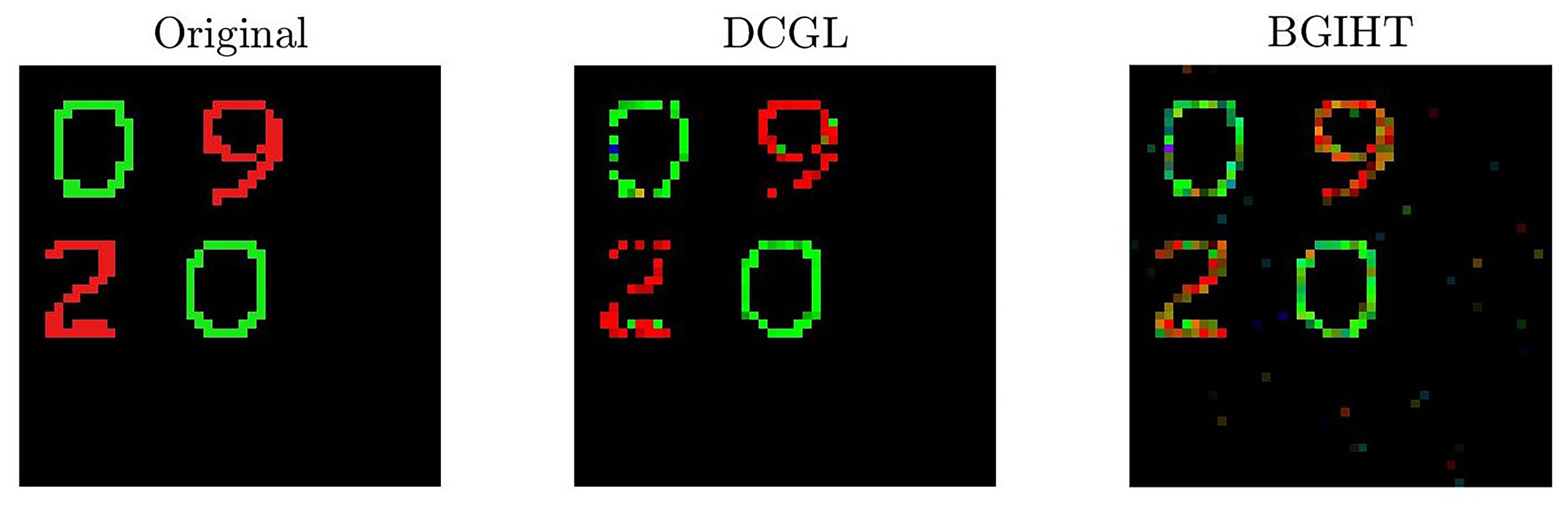
}
	\caption{Image recovery by DCGL and PIHT-SGB with $\sigma=0.12$.}
	\label{fig6}
\end{figure}

\begin{table*}[htbp]
\scriptsize
\centering
\caption{\text{Results on 2D-image recovery}.}\label{Tab03}
\begin{tabular*}{\textwidth}{@{\extracolsep\fill}ccccccc}
\toprule
\multirow{2}{*}{\text{Method}}
&\multicolumn{2}{c}{$\sigma = 0.08$}
&\multicolumn{2}{c}{$\sigma = 0.09$} &\multicolumn{2}{c}{$\sigma = 0.1$} \\
\cmidrule{2-3} \cmidrule{4-5} \cmidrule{6-7} &PSNR    &time (s)  &PSNR    &time (s)  &PSNR     &time (s)    \\
 \midrule
\text{DCGL} &27.66 &1.29 & 26.53  & 1.17 & 24.87  & 1.36  \\
\text{PIHT-SGB} &23.92 &0.4 & 23.97  & 0.38  & 23.68 & 0.38   \\
\midrule
\multirow{2}{*}{\text{Method}}
&\multicolumn{2}{c}{$\sigma = 0.12$} &\multicolumn{2}{c}{$\sigma = 0.13$} &\multicolumn{2}{c}{$\sigma = 0.14$} \\
\cmidrule{2-3} \cmidrule{4-5} \cmidrule{6-7} &PSNR    &time (s)  &PSNR     &time (s)    &PSNR    &time (s) \\
 \midrule
\text{DCGL} & 22.1  & 1.25 & 21.53  & 1.39  & 20.99  & 1.66   \\
\text{PIHT-SGB} & 22.51  & 0.42  & 21.88 & 0.43 & 21  & 0.79  \\
\bottomrule
\end{tabular*}
\end{table*}

As shown in Table \ref{Tab03}, Figure \ref{fig7} and Figure \ref{fig6}, we can see that, DCGL gives better results when $\sigma$ is smaller $(\sigma \le 0.1)$. We can also see that, our proposed PIHT-SGB returns higher PSNR while maintaining a relatively  efficient computational speed when $\sigma \in \{0.12,\, 0.13,\, 0.14\}$. It shows that PIHT-SGB has the advantage of dealing with larger noise cases.
\section{Conclusion}
In this paper, we proposed {a} proximal iterative hard thresholding method for addressing $ \ell_0 $ and $ \ell_{2,0} $ regularized optimization problem with box constraints. We derived a closed-form solution of the proximal operator for the considered optimization problem. Based on this proximal operator, we developed the proximal iterative hard thresholding method for sparse group $\ell_{0}$ regularized optimization with Box constraints (PHIT-SGB). We introduced the concepts of $\tau$-stationary point and support optimal (SO) point for \eqref{PBL0} and we established the relationship among them and the minimizer of \eqref{PBL0}. The global convergence of our proposed algorithm was established under standard assumptions. Finally, the numerical results highlighted the benefit of combining group sparsity terms with element-wise sparsity terms and demonstrated the efficiency of the proposed method.

\section{\textbf{Acknowledgement}}
We would like to thank Professor Wei Bian for sharing their code for DCGL, which is used in our paper as comparison. We would also like to thank two anonymous reviewers for wonderful comments, based on which, the paper is significantly improved.

\bibliography{sn-bibliography}

@ARTICLE{1542412,
  author={Candes, E.J. and Tao, T.},
  journal={IEEE Transactions on Information Theory}, 
  title={Decoding by linear programming}, 
  year={2005},
  volume={51},
  number={12},
  pages={4203-4215}
}

@article{zhou2021newton,
  title={Newton method for $\ell_0$-regularized optimization},
  author={Zhou, Shenglong and Pan, Lili and Xiu, Naihua},
  journal={Numerical Algorithms},
  pages={1--30},
  year={2021},
  publisher={Springer}
}

@article{cheng2020active,
  title={An active set Barzilar--Borwein algorithm for $\ell_0$ regularized optimization},
  author={Cheng, Wanyou and Chen, Zixin and Hu, Qingjie},
  journal={Journal of Global Optimization},
  volume={76},
  number={4},
  pages={769--791},
  year={2020},
  publisher={Springer}
}

@article{blumensath2008iterative,
  title={Iterative thresholding for sparse approximations},
  author={Blumensath, Thomas and Davies, Mike E},
  journal={Journal of Fourier analysis and Applications},
  volume={14},
  pages={629--654},
  year={2008},
  publisher={Springer}
}

@article{wu2020accelerated,
  title={Accelerated iterative hard thresholding algorithm for $\ell_0$ regularized regression problem},
  author={Wu, Fan and Bian, Wei},
  journal={Journal of Global Optimization},
  volume={76},
  number={4},
  pages={819--840},
  year={2020},
  publisher={Springer}
}

@article{zhang2022solving,
  title={Solving constrained nonsmooth group sparse optimization via group Capped-$\ell_1$ relaxation and group smoothing proximal gradient algorithm},
  author={Zhang, Xian and Peng, Dingtao},
  journal={Computational Optimization and Applications},
  volume={83},
  number={3},
  pages={801--844},
  year={2022},
  publisher={Springer}
}

@article{lu2014iterative,
  title={Iterative hard thresholding methods for $\ell_0$ regularized convex cone programming},
  author={Lu, Zhaosong},
  journal={Mathematical Programming},
  volume={147},
  number={1},
  pages={125--154},
  year={2014},
  publisher={Springer}
}

@article{calamai1987projected,
  title={Projected gradient methods for linearly constrained problems},
  author={Calamai, Paul H and Mor{\'e}, Jorge J},
  journal={Mathematical Programming},
  volume={39},
  number={1},
  pages={93--116},
  year={1987},
  publisher={Springer}
}

@article{bolte2014proximal,
  title={Proximal alternating linearized minimization for nonconvex and nonsmooth problems},
  author={Bolte, J{\'e}r{\^o}me and Sabach, Shoham and Teboulle, Marc},
  journal={Mathematical Programming},
  volume={146},
  number={1},
  pages={459--494},
  year={2014},
  publisher={Springer}
}

@article{10.1111/j.2517-6161.1996.tb02080.x,
    author = {Tibshirani, Robert},
    title = {Regression Shrinkage and Selection Via the Lasso},
    journal = {Journal of the Royal Statistical Society: Series B (Methodological)},
    volume = {58},
    number = {1},
    pages = {267-288},
    year = {2018},
    month = {12}
}

@article{10.1111/j.1467-9868.2005.00532.x,
    author = {Yuan, Ming and Lin, Yi},
    title = {Model Selection and Estimation in Regression with Grouped Variables},
    journal = {Journal of the Royal Statistical Society Series B: Statistical Methodology},
    volume = {68},
    number = {1},
    pages = {49-67},
    year = {2005},
    month = {12}
}

@article{Simon01012013,
author = {Noah Simon and Jerome Friedman and Trevor Hastie and Robert Tibshirani and},
title = {A Sparse-Group Lasso},
journal = {Journal of Computational and Graphical Statistics},
volume = {22},
number = {2},
pages = {231--245},
year = {2013},
publisher = {ASA Website}
}

@article{VINCENT2014771,
title = {Sparse group lasso and high dimensional multinomial classification},
journal = {Computational Statistics \& Data Analysis},
volume = {71},
pages = {771-786},
year = {2014},
issn = {0167-9473},
doi = {https://doi.org/10.1016/j.csda.2013.06.004},
url = {https://www.sciencedirect.com/science/article/pii/S0167947313002168},
author = {Martin Vincent and Niels Richard Hansen}
}

@INPROCEEDINGS{6137353,
  author={Chatterjee, Soumyadeep and Banerjee, Arindam and Chatterjee, Snigdhanshu and Ganguly, Auroop R.},
  booktitle={2011 IEEE 11th International Conference on Data Mining Workshops}, 
  title={Sparse Group Lasso for Regression on Land Climate Variables}, 
  year={2011},
  volume={},
  number={},
  pages={1-8},
  doi={10.1109/ICDMW.2011.155}}

@article{xie2014sparse,
  title={Sparse group LASSO based uncertain feature selection},
  author={Xie, Zongxia and Xu, Yong},
  journal={International Journal of Machine Learning and Cybernetics},
  volume={5},
  pages={201--210},
  year={2014},
  publisher={Springer}
}

@ARTICLE{7890458,
  author={Zhou, Yun and Han, Jianghong and Yuan, Xiaohui and Wei, Zhenchun and Hong, Richang},
  journal={IEEE Transactions on Multimedia}, 
  title={Inverse Sparse Group Lasso Model for Robust Object Tracking}, 
  year={2017},
  volume={19},
  number={8},
  pages={1798-1810},
  keywords={Computational modeling;Sparse matrices;Dictionaries;Image reconstruction;Robustness;Object tracking;Computer vision;hash distance;sparse coding;sparse group lasso;visual tracking},
  doi={10.1109/TMM.2017.2689918}}

@misc{friedman2010notegrouplassosparse,
      title={A note on the group lasso and a sparse group lasso},
      author={J. Friedman and T. Hastie and R. Tibshirani},
      year={2010},
      eprint={1001.0736},
      archivePrefix={arXiv},
      primaryClass={math.ST},
      url={https://arxiv.org/abs/1001.0736},
}

@article{Laria03072019,
author = {Juan C. Laria and M. Carmen Aguilera-Morillo and Rosa E. Lillo and},
title = {An Iterative Sparse-Group Lasso},
journal = {Journal of Computational and Graphical Statistics},
volume = {28},
number = {3},
pages = {722--731},
year = {2019},
publisher = {ASA Website}
}

@INPROCEEDINGS{5464845,
  author={Sprechmann, Pablo and Ramirez, Ignacio and Sapiro, Guillermo and Eldar, Yonina},
  booktitle={2010 44th Annual Conference on Information Sciences and Systems (CISS)}, 
  title={Collaborative hierarchical sparse modeling}, 
  year={2010},
  volume={},
  number={},
  pages={1-6},
  doi={10.1109/CISS.2010.5464845}}

@article{zhang2020efficient,
  title={An efficient Hessian based algorithm for solving large-scale sparse group Lasso problems},
  author={Zhang, Yangjing and Zhang, Ning and Sun, Defeng and Toh, Kim-Chuan},
  journal={Mathematical Programming},
  volume={179},
  pages={223--263},
  year={2020},
  publisher={Springer}
}

@article{LI2014203,
title = {Linearized alternating direction method of multipliers for sparse group and fused LASSO models},
journal = {Computational Statistics \& Data Analysis},
volume = {79},
pages = {203-221},
year = {2014},
issn = {0167-9473},
doi = {https://doi.org/10.1016/j.csda.2014.05.017},
url = {https://www.sciencedirect.com/science/article/pii/S0167947314001674},
author = {Xinxin Li and Lili Mo and Xiaoming Yuan and Jianzhong Zhang}
}

@article{shen2024smoothing,
  title={Smoothing composite proximal gradient algorithm for sparse group Lasso problems with nonsmooth loss functions},
  author={Shen, Huiling and Peng, Dingtao and Zhang, Xian},
  journal={Journal of Applied Mathematics and Computing},
  volume={70},
  number={3},
  pages={1887--1913},
  year={2024},
  publisher={Springer}
}

@ARTICLE{7505644,
  author={Yap, Pew-Thian and Zhang, Yong and Shen, Dinggang},
  journal={IEEE Transactions on Image Processing}, 
  title={Multi-Tissue Decomposition of Diffusion MRI Signals via  $\ell _{0}$  Sparse-Group Estimation}, 
  year={2016},
  volume={25},
  number={9},
  pages={4340-4353}
  }

@article{hu2025iterative,
  title={Iterative mix thresholding algorithm with continuation technique for mix sparse optimization and application},
  author={Hu, Yaohua and Lu, Jian and Yang, Xiaoqi and Zhang, Kai},
  journal={Journal of Global Optimization},
  pages={1--24},
  year={2025},
  publisher={Springer}
}

@article{liao2024subspace,
  title={Subspace Newton method for sparse group $\ell_0$ optimization problem},
  author={Liao, Shichen and Han, Congying and Guo, Tiande and Li, Bonan},
  journal={Journal of Global Optimization},
  volume={90},
  number={1},
  pages={93--125},
  year={2024},
  publisher={Springer}
}

@article{doi:10.1137/18M1186009,
author = {Bian, Wei and Chen, Xiaojun},
title = {A Smoothing Proximal Gradient Algorithm for Nonsmooth Convex Regression with Cardinality Penalty},
journal = {SIAM Journal on Numerical Analysis},
volume = {58},
number = {1},
pages = {858-883},
year = {2020}
}

@ARTICLE{6307860,
  author={Chen, Xiaojun and Ng, Michael K. and Zhang, Chao},
  journal={IEEE Transactions on Image Processing}, 
  title={Non-Lipschitz $\ell_{p}$-Regularization and Box Constrained Model for Image Restoration}, 
  year={2012},
  volume={21},
  number={12},
  pages={4709-4721}
}

@article{doi:10.1137/21M1443455,
author = {Li, Wenjing and Bian, Wei and Toh, Kim-Chuan},
title = {Difference-of-Convex Algorithms for a Class of Sparse Group $\ell_0$ Regularized Optimization Problems},
journal = {SIAM Journal on Optimization},
volume = {32},
number = {3},
pages = {1614-1641},
year = {2022}
}

@ARTICLE{7747459,
  author={Jiao, Yuling and Jin, Bangti and Lu, Xiliang},
  journal={IEEE Transactions on Signal Processing}, 
  title={Group Sparse Recovery via the $\ell ^0(\ell ^2)$ Penalty: Theory and Algorithm}, 
  year={2017},
  volume={65},
  number={4},
  pages={998-1012},
  doi={10.1109/TSP.2016.2630028}}

@article{Pang1990,
  title={Finite-dimensional variational inequality and nonlinear complementarity problems: a survey of theory, algorithms and applications},
  author={Harker, Patrick T and Pang, Jong-Shi},
  journal={Mathematical Programming},
  volume={48},
  number={1-3},
  pages={161--220},
  year={1990},
  publisher={Springer}
}

@article{doi:10.1137/17M1116544,
author = {Beck, Amir and Hallak, Nadav},
title = {Proximal Mapping for Symmetric Penalty and Sparsity},
journal = {SIAM Journal on Optimization},
volume = {28},
number = {1},
pages = {496-527},
year = {2018}
}

@ARTICLE{5954192,
  author={Duarte, Marco F. and Eldar, Yonina C.},
  journal={IEEE Transactions on Signal Processing}, 
  title={Structured Compressed Sensing: From Theory to Applications}, 
  year={2011},
  volume={59},
  number={9},
  pages={4053-4085},
  keywords={Hardware;Sparks;Coherence;Digital signal processing;Sensors;Vectors;Compressed sensing;Approximation algorithms;compressed sensing;compression algorithms;data acquisition;data compression;sampling methods},
  doi={10.1109/TSP.2011.2161982}}

@inbook{doi:10.1137/1.9781611972825.5,
author = {Soumyadeep Chatterjee and Karsten Steinhaeuser and Arindam Banerjee and Snigdhansu Chatterjee and Auroop Ganguly},
title = {Sparse Group Lasso: Consistency and Climate Applications},
booktitle = {Proceedings of the 2012 SIAM International Conference on Data Mining (SDM)},
chapter = {},
pages = {47-58},
doi = {10.1137/1.9781611972825.5},
URL = {https://epubs.siam.org/doi/abs/10.1137/1.9781611972825.5},
eprint = {https://epubs.siam.org/doi/pdf/10.1137/1.9781611972825.5}
   }

@article{10.1111/biom.12292,
    author = {Li, Yanming and Nan, Bin and Zhu, Ji},
    title = {Multivariate Sparse Group Lasso for the Multivariate Multiple Linear Regression with an Arbitrary Group Structure},
    journal = {Biometrics},
    volume = {71},
    number = {2},
    pages = {354-363},
    year = {2015},
    month = {03},
    issn = {0006-341X},
    doi = {10.1111/biom.12292},
    url = {https://doi.org/10.1111/biom.12292},    
}

@ARTICLE{10636212,
  author={Liu, Ya-Feng and Chang, Tsung-Hui and Hong, Mingyi and Wu, Zheyu and Man-Cho So, Anthony and Jorswieck, Eduard A. and Yu, Wei},
  journal={IEEE Journal on Selected Areas in Communications}, 
  title={A Survey of Recent Advances in Optimization Methods for Wireless Communications}, 
  year={2024},
  volume={42},
  number={11},
  pages={2992-3031},
  keywords={Tutorials;Array signal processing;Optimization methods;Power control;Resource management;Wireless communication;Scheduling;Distributed computing;Beamforming;distributed optimization;global optimization;learning-based optimization;integer optimization;nonconvex nonsmooth optimization;power control;resource allocation;scheduling;sparse optimization;wireless communications},
  doi={10.1109/JSAC.2024.3443759}}

@ARTICLE{5447114,
  author={Zibulevsky, Michael and Elad, Michael},
  journal={IEEE Signal Processing Magazine}, 
  title={L1-L2 Optimization in Signal and Image Processing}, 
  year={2010},
  volume={27},
  number={3},
  pages={76-88},
  keywords={Signal processing;Image processing;Iterative algorithms;Optimization methods;Dictionaries;Acceleration;Iterative methods;Character generation;Compressed sensing;Computed tomography},
  doi={10.1109/MSP.2010.936023}}

@InProceedings{pmlr-v32-qin14,
  title = 	 {Sparse Reinforcement Learning via Convex Optimization},
  author = 	 {Qin, Zhiwei and Li, Weichang and Janoos, Firdaus},
  booktitle = 	 {Proceedings of the 31st International Conference on Machine Learning},
  pages = 	 {424--432},
  year = 	 {2014},
  editor = 	 {Xing, Eric P. and Jebara, Tony},
  volume = 	 {32},
  number =       {2},
  series = 	 {Proceedings of Machine Learning Research},
  address = 	 {Bejing, China},
  month = 	 {22--24 Jun},
  publisher =    {PMLR},
  pdf = 	 {http://proceedings.mlr.press/v32/qin14.pdf},
  url = 	 {https://proceedings.mlr.press/v32/qin14.html},
  }

@Inbook{Deng2013,
author="Deng, Yue
and Dai, Qionghai
and Zhang, Zengke",
editor="Yang, Xin-She",
title="An Overview of Computational Sparse Models and Their Applications in Artificial Intelligence",
bookTitle="Artificial Intelligence, Evolutionary Computing and Metaheuristics: In the Footsteps of Alan Turing",
year="2013",
publisher="Springer Berlin Heidelberg",
address="Berlin, Heidelberg",
pages="345--369",
isbn="978-3-642-29694-9",
doi="10.1007/978-3-642-29694-9_14",
url="https://doi.org/10.1007/978-3-642-29694-9_14"
}

@InProceedings{10.1007/978-3-319-24574-416,
author="Yap, Pew-Thian
and Zhang, Yong
and Shen, Dinggang",
editor="Navab, Nassir
and Hornegger, Joachim
and Wells, William M.
and Frangi, Alejandro F.",
title="Brain Tissue Segmentation Based on Diffusion MRI Using $\ell_0$ Sparse-Group Representation Classification",
booktitle="Medical Image Computing and Computer-Assisted Intervention -- MICCAI 2015",
year="2015",
publisher="Springer International Publishing",
address="Cham",
pages="132--139",
isbn="978-3-319-24574-4"
}

@Article{s22114081,
AUTHOR = {Shao, Yubo and Zhao, Kaikai and Cao, Zhiwen and Peng, Zhehao and Peng, Xingang and Li, Pan and Wang, Yijie and Ma, Jianzhu},
TITLE = {MobilePrune: Neural Network Compression via $\ell_0$ Sparse Group Lasso on the Mobile System},
JOURNAL = {Sensors},
VOLUME = {22},
YEAR = {2022},
NUMBER = {11},
ARTICLE-NUMBER = {4081},
URL = {https://www.mdpi.com/1424-8220/22/11/4081},
PubMedID = {35684708},
ISSN = {1424-8220},
DOI = {10.3390/s22114081}
}

\end{document}